\newtheorem{theorem}{Theorem}[section] 
\newtheorem{proposition}[theorem]{Proposition} 
\newtheorem{corollary}[theorem]{Corollary} 
\newtheorem{lemma}[theorem]{Lemma} 
\theoremstyle{definition} 
\newtheorem{definition}[theorem]{Definition} 
\newtheorem{example}[theorem]{Example}
\newtheorem{remark}[theorem]{Remark} 
\renewcommand\theequation{\arabic{section}.\arabic{equation}} 
\newcommand{\CC}{{\mathbb C}} 
\newcommand{\NN}{{\mathbb N}} 
\newcommand{\QQ}{{\mathbb Q}}
\newcommand{\RR}{{\mathbb R}} 
\newcommand{\FF}{{\mathbb F}}
\newcommand{\cB}{{\mathcal B}} 
\newcommand{\cC}{{\mathcal C}} 
\newcommand{\cE}{{\mathcal E}} 
\newcommand{\cF}{{\mathcal F}} 
\newcommand{\cG}{{\mathcal G}} 
\newcommand{\cH}{{\mathcal H}} 
\newcommand{\cJ}{{\mathcal J}} 
\newcommand{\cK}{{\mathcal K}} 
\newcommand{\cL}{{\mathcal L}} 
\newcommand{\cN}{{\mathcal N}} 
\newcommand{\cP}{{\mathcal P}} 
\newcommand{\cR}{{\mathcal R}} 
\newcommand{\cS}{{\mathcal S}}
\newcommand{\bx}{\mathbf{x}}
\newcommand{\Ra}{\Rightarrow} 
\newcommand{\ran}{\operatorname{Ran}} 
\newcommand{\ra}{\rightarrow} 
\newcommand{\ol}{\overline}
\newcommand{\tr}{\operatorname{tr}} 
\let\phi=\varphi 
\newcommand{\iac}{\mathrm{i}} 
\renewcommand{\ker}{\operatorname{Null}}
\newcommand{\de}{\mathrm{d}}
\newcommand{\supp}{\operatorname{supp}}
\newcommand{\Span}{\operatorname{Span}}
\newcommand{\card}{\operatorname{card}}
\newcommand{\rank}{\operatorname{rank}}
\renewcommand{\Re}{\operatorname{Re}}
\newcommand{\Diag}{\operatorname{Diag}}
\newcommand{\nr}[1]{\vspace{0.1ex}\noindent\hspace*{12mm}\llap{\textup{(#1)}}} 
\begin{document} 
\title[Mercer's Theorem]{An Operator Theoretical Approach to \\
Mercer's Theorem}
\author{Aurelian Gheondea}
\address{Institutul de Matematic\u a al Academiei Rom\^ane, Calea Grivi\c tei 21,  010702 Bucure\c sti, Rom\^ania \emph{and}
Department of Mathematics, Bilkent University, 06800 Bilkent, Ankara, Turkey} 
\email{A.Gheondea@imar.ro \textrm{and} aurelian@fen.bilkent.edu.tr} 

\begin{abstract} This is a survey article on Mercer's Theorem in its most general form 
and its relations with the theory of reproducing 
kernel Hilbert spaces and the spectral theory of compact operators. We provide a modern introduction to 
the basics of the theory of reproducing kernel Hilbert spaces, an overview on Weyl's kernel and the Gaussian 
kernels, and finally an approach to Mercer's Theorem within the theory of reproducing kernel Hilbert spaces and the spectral 
theory of integral operators. 
This approach is reverse to the known approaches to Mercer's Theorem and sheds some light on the intricate 
relations between different domains in analysis.
\end{abstract}

\keywords{Mercer's Theorem, reproducing kernel Hilbert spaces, Hilbert-Schmidt operators.}

\subjclass[2020]{Primary 46E22; Secondary 47B34, 47B32, 47B10}

\maketitle

\section{Introduction}

The idea of writing this manuscript occurred to me during 
the presentations I have made in the Seminar of Operator Theory at the 
Institute of Mathematics of the Romanian Academy in Bucharest, following the survey paper of F.~Cucker and S.~Smale 
\cite{CuckerSmale} on the mathematical foundations of machine learning. To be more precise, this refers to 
Chapter III in that paper,  which deals with the generalisation of 
the celebrated Mercer's Theorem \cite{Mercer}, to the case when the base set $X$ is a compact metric space 
endowed with a finite Borel measure $\nu$ with full support,
and its consequences for reproducing kernel Hilbert spaces induced by continuous positive semidefinite kernels 
on $X$. On the one hand, the goals of the paper \cite{CuckerSmale} are strongly related to machine 
learning and kernel methods have been used successfully for some good time, for example, see the monographs of I.~Steinwart 
and A.~Christman \cite{SteinwartChristman} and that of  B. Sch\"olkopf and A.J. Smola \cite{ScholkopfSmola}.  
On the other hand, recently there is a new direction of research in machine learning and 
dynamical systems, for example see the recent survey paper of L.~Rosasco \cite{Rosasco}, for which reproducing kernel Hilbert 
spaces make an essential tool. The relations between these two directions are quite strong, in view of the fact that the main idea of 
the kernel method is linearisation of certain nonlinear problems. We think that having a careful 
presentation, with most of the technical details clarified from the mathematics point of view, will be helpful for both communities, 
mathematicians and engineers working in machine learning. An outline of the results and 
the flow of proofs on Mercer's Theorem in this article were presented in the Machine Learning and Dynamical Systems Seminar, 
organised by Boumediene Hamzi as an online seminar at the Alan Turing Institute. 

Also, we thought that it is more illuminating to put Mercer's Theorem into a different perspective, when compared with other 
available resources. For this reason, first we make a careful presentation of the basics of reproducing kernel Hilbert space in the 
full generality, that is, when the set $X$ bears no other structure on it, 
and prove the most important results that are required for our enterprise. 
In this respect, we approach Moore's Theorem of the existence of reproducing kernel 
Hilbert spaces associated to a given positive semidefinite kernel through Kolmogorov's Theorem on 
the linearisation of this kind of kernels, which shortcuts a significant part of the original proof, and we make a 
clear connection with the theory of operator ranges. In view of a 
clear perspective on Mercer's Theorem, the recovering of kernels from orthonormal bases of the corresponding 
reproducing kernel Hilbert space plays an important role and, as we will show, clarifies an important aspect in the 
proof and the interpretation of Mercer's Theorem. Also, we briefly make the connection of reproducing kernel 
Hilbert spaces with operator ranges and prove Aronszajn's Inclusion Theorem by exploiting this point of view.

As a sequel of the basics of the theory of reproducing kernel Hilbert spaces, we overview two of the 
most useful kernels in machine learning, 
Weyl's kernel and the Gaussian kernel. For the proofs of many more results we recommend 
the article of H.Q.~Minh \cite{Minh}, as well as the article of I.~Steinwart, 
D.~Hush, C.~Scovel \cite{SteinwartHushScovel} for previous results on this issue.

Mercer's Theorem was obtained in 1909 and it was motivated by the results of D.~Hilbert and his student 
E.~Schmidt on integral equations, which triggered what is now called Hilbert-Schmidt operators and lead to
one of the most useful ideal of compact operators on a Hilbert space. J.~Mercer was the first who singled out the 
concept of positive semidefinite kernel in the modern understanding and our aim is to put his celebrated theorem 
into an operator theoretical perspective and shed more light on the deep relation of it with the theory of 
reproducing kernel Hilbert spaces. Originally, this theorem was stated for the case of $X$ a compact interval of 
the real line and $\nu$ the Lebesgue measure. In the classical texts, such as 
F.~Riesz and B.~Sz.-Nagy \cite{RieszNagy} 
and I.~Gohberg, S.~Goldberg, and M.A.~Kaashoek \cite{GohbergGoldberg}, the lines of the original 
proof are followed. Generalisations of this theorem can be found in F.~Cucker and D.-X.~Zhou \cite{CuckerZhou} 
and in the two most important monographs on the 
theory of reproducing kernel Hilbert spaces, which are V.I.~Paulsen and M.~Raghupathi 
\cite{PaulsenRaghupathi} and S.~Saitoh and Y.~Sawano \cite{SaitohSawano}, to the case when $X$ is a 
compact metric space and $\nu$ is a finite Borel measure on $X$, but each of the proofs in the cited 
monographs has flaws. Paulsen-Raghupathi misses the most important conclusion in the Mercer's Theorem, 
which is the uniform convergence of the series, while Cucker-Zhou and 
Saitoh-Sawano miss the pointwise convergence, which is  essential in applying Dini's Theorem. 

Another important reason why we have chosen this operator theoretical approach in order 
to prove the general version of 
the Mercer's Theorem is that, when J.~Mercer obtained its classical version, his motivation was driven by the 
Hilbert-Schmidt theory of integral operators and, at that time, 
the concept of reproducing kernel Hilbert space was 
not available, although the idea, but not the concept, first appeared two years before at S.~Zaremba 
\cite{Zaremba}.
Except for a short note on positive semidefinite kernels by E.H.~Moore \cite{Moore1}, these ideas have not 
being used until the dissertations of three Berlin
mathematicians: in 1921 by G.~Szeg\"o \cite{Szego} and in 1922 by S.~Bergman \cite{Bergman} and 
S.~Bochner \cite{Bochner}.
In particular, S.~Bergman introduced reproducing kernels in one and several
variables for the class of harmonic and analytic functions and he called
them ”kernel functions”. These ideas have been used 
later by E.H.~Moore  \cite{Moore} and their 
abstract conceptualisation was performed by N.~Aronszajn in \cite{Aronszajn1} and \cite{Aronszajn2}. A 
completely different point 
of view was taken by L.~Schwartz in 1964 \cite{Schwartz}, who considered Hilbert spaces continuously included 
in quasi-complete locally convex spaces, and it turned out that this theory is equivalent with Aronszajn's theory. 
More than that, 
Schwartz considered not only positive semidefinite kernels but also Hermitian kernels and, 
correspondingly, he obtained a theory of reproducing kernel Krein spaces, which are generalisations of Hilbert 
spaces to the case 
when the inner product is not positive definite, but only Hermitian, and with certain geometric-topological 
properties. A survey on reproducing kernel Krein spaces can be found in \cite{GhOT2015}, while a recent
monograph on Krein spaces and their linear operators  is  \cite{Gheondea}, where the role of Hermitian, but not 
positive semidefinite kernels, can be seen for Carath\'eodory, Schur, and Nevanlinna kernels. There is an 
interest in learning theory with kernels 
that are not positive semidefinite but only symmetric, as seen at C.S.~Ong at al. \cite{Ong} and D.~Oglic 
and T.~G\"artner \cite{OglicGartner}.

However, although later it was realised that Mercer's Theorem has important consequences for the theory of 
reproducing kernel 
Hilbert spaces, a clear explanation on why it is so closely related to  that theory was not provided. For 
this reason, we take a reverse order in the approach of Mercer's Theorem, through the spectral theory of 
compact operators in Hilbert spaces and the theory of reproducing kernel Hilbert spaces. More precisely, we 
show that Mercer's 
Theorem is a special case of the more general theorem of recovering the reproducing kernel from an 
orthonormal basis of the 
reproducing kernel Hilbert space. This approach is used in a different setting
by C.~Carmeli, E.~De Vito, A.~Toigo \cite{Carmeli} for a generalisation of the Mercer's Theorem to the operator 
valued kernels. An approach closer to our presentation can be found also in the monograph of 
F.~Cucker and D.-X. Zhou \cite{CuckerZhou}, but some of the technical details differ considerably. Also, our 
journey through the preliminary results for Mercer's Theorem passes through the operator range representation 
of reproducing kernel Hilbert spaces associated to Mercer kernels.

The reader of this article is supposed to have 
a good command on operator theory on Hilbert spaces. We recommend
a few textbooks and monographs where most of the basic concepts and results can be found: 
F.~Riesz, B.~Sz.-Nagy \cite{RieszNagy}
M.S.~Birman, M.Z.~Solomjak \cite{BirmanSolomjak},
J.B.~Conway \cite{Conway1} and \cite{Conway}, I.~Gohberg, S.~Goldberg, M.A.~Kaashoek 
\cite{GohbergGoldberg}, A.W.~Knapp \cite{KnappARA}. The basics of real analysis that we use here can be 
found in A.W.~Knapp \cite{KnappBRA}, while the measure theory can be found at H.S.~Bear \cite{Baer} and, for 
more advanced topics, at H.~Geiss and S.~Geiss \cite{Geiss2025}, for example.
For the readers' convenience we included
an appendix, in which we review most of the definitions and results that are needed in 
connection to compact, trace-class, and Hilbert-Schmidt operators.\medskip

\textbf{Acknowledgements:} Thanks are due to the participants of the Operator Theory Seminar in the Institute of Mathematics of 
the Romanian Academy in Bucharest for their questions, comments, and corrections 
during the presentations of these notes and especially to Alexandru Must\u a\c tea for 
a very thorough reading of the manuscript and a very detailed list of corrections and observations. An outline of the results and 
the flow of proofs on Mercer's Theorem in this article was presented in the Machine Learning and Dynamical Systems Seminar 
organised by Boumediene Hamzi as an online seminar at the Alan Turing Institute. 
The author expresses his thanks for this invitation and for the 
questions and discussions that followed the two presentations. 

\section{Reproducing Kernel Hilbert Spaces --- The Basics}\label{s:rkhs}

In this section we present the basics for the theory of reproducing kernel Hilbert spaces following mostly the 
conceptualisation of N.~Aronszajn \cite{Aronszajn1} and \cite{Aronszajn2} but in a modern presentation. Use of N.~Kolmogorov's 
Theorem \cite{Kolmogorov1}, \cite{Kolmogorov2} shortcuts some intricate proofs. There is another parallel 
conceptualisation of this theory of L.~Schwartz \cite{Schwartz}, but this is only touched upon in
Remark~\ref{r:schwartz} and Remark~\ref{r:opran}.

\subsection{Positive Semidefinite Kernels}
Let $X$ be a nonempty set and consider a \emph{kernel} $K$ on $X$ and valued in $\FF$, that is,
$K\colon X\times X\ra \FF$, where $\FF$ is either $\CC$ or $\RR$. By definition, 
$K$ is \emph{Hermitian} (if $\FF=\CC$) or \emph{symmetric} (if $\FF=\RR)$ if
\begin{equation*}
K(y,x)=\overline{K(x,y)},\quad x,y\in X.
\end{equation*}
Also, $K$ is \emph{positive semidefinite} if for all $n\in\NN$, all $x_1,\ldots,x_n\in X$, and all $\alpha_1,\ldots,\alpha_n\in\FF$ we have
\begin{equation*}
\sum_{i,j=1}^n \overline{\alpha_i}\alpha_j K(x_i,x_j)\geq 0.
\end{equation*}
Note that this means that for any $n\in\NN$ and any $x_1,\ldots,x_n\in X$, the $n\times n$ matrix 
$[K(x_i,x_j)]_{i,j=1}^n$ is positive as a linear operator $\FF^n\ra\FF^n$. 

\begin{remark} In the case $\FF=\CC$, positive 
semidefiniteness of $K$ implies that $K$ is Hermitian. Indeed, first, letting $n=1$ it follows that $K(x,x)\geq 0$ for all $x\in X$. 
Then, for arbitrary $x,y\in X$ and $\alpha,\beta\in \CC$ we have
\begin{equation*}
|\alpha|^2 K(x,x)+\alpha\ol{\beta}K(x,y)+\ol{\alpha}\beta K(y,x)+|\beta|^2K(y,y)\geq 0,
\end{equation*}
hence
\begin{equation*}
\alpha\ol{\beta}K(x,y)+\ol{\alpha}\beta K(y,x)\in\RR.
\end{equation*}
Replacing $\beta$ with $\iac \beta$ from here we get
\begin{equation*}
\alpha\ol{\beta}K(x,y)-\ol{\alpha}\beta K(y,x)\in\iac\RR.
\end{equation*}
Letting $\alpha=\beta=1$, from the last two properties we get $K(x,y)=\ol{K(y,x)}$.

If $\FF=\RR$ the positive definiteness of $K$ does not 
imply that it is symmetric. A simple example is obtained if $X=\{1,2\}$ and $K$ is defined by the $2\times 2$ real 
matrix
\begin{equation*}
K=\begin{bmatrix} 1 & 1 \\ 0 & 1\end{bmatrix}.
\end{equation*}
\end{remark}

\begin{lemma}[Schwarz's Inequality]\label{l:schwarz}
If $K$ is a Hermitian/symmetric and positive semidefinite kernel, then
\begin{equation*}
|K(x,y)|^2\leq K(x,x) K(y,y),\quad x,y\in X.
\end{equation*}
\end{lemma}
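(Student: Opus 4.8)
The plan is to reduce the statement to the positivity of a single $2\times 2$ matrix and then extract the inequality from a scalar quadratic form. First I would fix $x,y\in X$ and instantiate the definition of positive semidefiniteness with $n=2$, $x_1=x$, $x_2=y$, and arbitrary coefficients $\alpha_1=\alpha$, $\alpha_2=\beta\in\FF$. This produces
\begin{equation*}
|\alpha|^2 K(x,x)+\ol{\alpha}\beta K(x,y)+\ol{\beta}\alpha K(y,x)+|\beta|^2K(y,y)\geq 0,
\end{equation*}
exactly the quadratic inequality already recorded in the Remark preceding the lemma. Using the Hermitian/symmetric hypothesis $K(y,x)=\ol{K(x,y)}$, the two cross terms combine into $2\Re\bigl(\ol{\alpha}\beta K(x,y)\bigr)$, so the form depends on $K(x,y)$ only through its modulus once the phases of $\alpha,\beta$ are chosen well.

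Next I would exploit this freedom in the phases. Assuming for the moment $K(x,y)\neq 0$, I set $\alpha=1$ and $\beta=-s\,\ol{K(x,y)}/|K(x,y)|$ for a real parameter $s\geq 0$; then $\ol{\alpha}\beta K(x,y)=-s|K(x,y)|$ and $|\beta|^2=s^2$, and the inequality becomes the scalar quadratic
\begin{equation*}
K(x,x)-2s\,|K(x,y)|+s^2K(y,y)\geq 0,\qquad s\geq 0.
\end{equation*}
If $K(y,y)>0$ this is a genuine quadratic in $s$ that is nonnegative for all real $s$, so its discriminant is nonpositive, which is precisely $|K(x,y)|^2\leq K(x,x)K(y,y)$.

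Finally I would dispose of the degenerate cases, which is where the only real care is needed. If $K(x,y)=0$ the claimed inequality holds trivially because $K(x,x),K(y,y)\geq 0$ (take $n=1$ in the definition). If instead $K(x,y)\neq 0$ but $K(y,y)=0$, the displayed quadratic reduces to $K(x,x)-2s|K(x,y)|\geq 0$ for all $s\geq 0$; letting $s\ra\infty$ forces $|K(x,y)|=0$, a contradiction, so in fact $K(y,y)=0$ already implies $K(x,y)=0$ and the inequality reads $0\leq 0$. The case $K(x,x)=0$ is handled identically. The main obstacle is thus not the generic computation but the bookkeeping of these vanishing-diagonal cases, since the discriminant argument silently assumes a strictly positive leading coefficient; an equivalent and perhaps cleaner packaging would be to observe that the $2\times 2$ matrix $[K(x_i,x_j)]_{i,j=1}^2$ with $x_1=x$, $x_2=y$ is positive, hence has nonnegative determinant $K(x,x)K(y,y)-|K(x,y)|^2\geq 0$, which absorbs all cases at once.
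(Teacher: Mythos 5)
Your argument is correct, including the careful treatment of the degenerate cases where a diagonal entry vanishes. The paper's own proof is exactly the ``cleaner packaging'' you mention in your last sentence: it forms the $2\times 2$ matrix $A=\bigl[\begin{smallmatrix} K(x,x) & K(x,y) \\ K(y,x) & K(y,y)\end{smallmatrix}\bigr]$, notes that it is Hermitian/symmetric and positive, and reads off $\det(A)=K(x,x)K(y,y)-|K(x,y)|^2\geq 0$ in one line. What your longer route buys is self-containedness: you derive everything directly from the definition of positive semidefiniteness without invoking the external fact that a positive Hermitian matrix has nonnegative determinant, and your explicit discussion of the vanishing-diagonal cases makes visible exactly where the discriminant argument needs a strictly positive leading coefficient --- a point the determinant formulation hides (and silently absorbs). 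What the paper's route buys is brevity and immunity to that case analysis. The only mild redundancy in your write-up is that, having observed the determinant argument covers all cases at once, the preceding case-by-case analysis becomes logically unnecessary; either half of your proposal would stand alone.
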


\begin{proof} For arbitrary $x,y\in X$ we consider the $2\times 2$ matrix
\begin{equation*}
A=\begin{bmatrix} K(x,x) & K(x,y) \\ K(y,x) & K(y,y)\end{bmatrix},
\end{equation*}
which is Hermitian/symmetric and positive, hence its determinant is
\begin{equation*}
\det (A)=K(x,x)K(y,y)-|K(x,y)|^2\geq 0,
\end{equation*}
which is exactly the required inequality.
\end{proof}

\begin{theorem}[N.~Kolmogorov \cite{Kolmogorov1}, \cite{Kolmogorov2}]\label{t:kolmo}
Let $K\colon X\times X\ra\FF$ be a kernel. 

\nr{a} The following assertions are equivalent.
\begin{itemize}
\item[(1)] $K$ is a (symmetric) positive semidefinite kernel.
\item[(2)] There exists a pair $(\cH,\Phi)$, called a \emph{linearisation of $K$}, 
where $\cH$ is a Hilbert space and $\Phi\colon X\ra \cH$ is a map, 
such that
\begin{equation*}
K(x,y)=\langle \Phi(y),\Phi(x)\rangle_{\cH},\quad x,y\in X.
\end{equation*}
\end{itemize}

\nr{b} If it exists, the linearisation 
$(\cH,\Phi)$ can always be chosen \emph{minimal}, in the sense that $\Phi(X)$ is a 
total subset of $\cH$.

\nr{c} If $(\cH_1,\Phi_1)$ and $(\cH_2,\Phi_2)$ are two minimal linearisations of $K$ then there exists uniquely a unitary operator $U\colon \cH_1\ra\cH_2$ such that $U\Phi_1(x)=\Phi_2(x)$ for all $x\in X$.
\end{theorem}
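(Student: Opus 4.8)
The plan is to prove part (a) as two implications and then read off (b) and (c) from the concrete model produced in the nontrivial direction. The implication $(2)\Ra(1)$ is immediate: given a linearisation, writing $v=\sum_{j=1}^n\alpha_j\Phi(x_j)$ I would note that the defining sum collapses,
$\sum_{i,j}\ol{\alpha_i}\alpha_j K(x_i,x_j)=\sum_{i,j}\ol{\alpha_i}\alpha_j\langle\Phi(x_j),\Phi(x_i)\rangle_\cH=\|v\|_\cH^2\geq 0$,
and in the real case the symmetry of the inner product forces $K(x,y)=K(y,x)$, so a linearisation automatically yields a symmetric kernel.

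The substance is the converse $(1)\Ra(2)$, which I would carry out as a GNS-type construction. First I form the free $\FF$-vector space $\cV$ with basis $\{e_x : x\in X\}$ (the finitely supported functions $X\ra\FF$) and equip it with the unique sesquilinear form determined by $\langle e_y,e_x\rangle_0:=K(x,y)$. Positive semidefiniteness of $K$ says exactly that $\langle f,f\rangle_0=\sum_{i,j}\ol{\alpha_i}\alpha_j K(x_i,x_j)\geq 0$ for $f=\sum_i\alpha_i e_{x_i}$, so $\langle\cdot,\cdot\rangle_0$ is a genuine semi-inner product. Since it may be degenerate, the next step is to quotient it out: by the Cauchy--Schwarz inequality for the positive semidefinite form $\langle\cdot,\cdot\rangle_0$ (the same determinant argument as in Lemma~\ref{l:schwarz}), the null set $\cN=\{f\in\cV : \langle f,f\rangle_0=0\}$ is a subspace and coincides with the radical $\{f : \langle f,g\rangle_0=0 \text{ for all }g\in\cV\}$, so $\langle\cdot,\cdot\rangle_0$ descends to an honest inner product on $\cV/\cN$. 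Completing gives a Hilbert space $\cH$, and setting $\Phi(x):=[e_x]$ yields $\langle\Phi(y),\Phi(x)\rangle_\cH=K(x,y)$ by construction.

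Part (b) is then immediate from this model, since $\{[e_x] : x\in X\}$ spans the dense subspace $\cV/\cN\subset\cH$, so $\Phi(X)$ is total; alternatively, given any linearisation one restricts $\cH$ to the closed linear span of $\Phi(X)$ without altering any inner product. For part (c), given two minimal linearisations $(\cH_1,\Phi_1)$ and $(\cH_2,\Phi_2)$ I would define $U$ on the dense span of $\Phi_1(X)$ by $U\bigl(\sum_i\alpha_i\Phi_1(x_i)\bigr):=\sum_i\alpha_i\Phi_2(x_i)$. The key identity is that both $\|\sum_i\alpha_i\Phi_1(x_i)\|_{\cH_1}^2$ and $\|\sum_i\alpha_i\Phi_2(x_i)\|_{\cH_2}^2$ equal the same expression $\sum_{i,j}\ol{\alpha_i}\alpha_j K(x_i,x_j)$; this simultaneously shows $U$ is well defined (a vanishing input forces a vanishing output) and isometric. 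As a densely defined isometry between Hilbert spaces, $U$ extends uniquely to an isometry $\cH_1\ra\cH_2$ whose range contains the total set $\Phi_2(X)$ and is closed, hence $U$ is unitary; uniqueness follows because any admissible unitary is prescribed on $\Phi_1(X)$ and extends by continuity.

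The only genuinely delicate points are the two places where the semidefinite, rather than definite, nature of the form intervenes: the passage to $\cV/\cN$ in $(1)\Ra(2)$ and the well-definedness of $U$ in (c). Both are resolved by the same observation, namely that the semi-inner product is rigidly determined by $K$ through the quadratic expression above, so I expect no serious obstacle beyond careful bookkeeping of the sesquilinear conventions.
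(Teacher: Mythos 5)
Your proposal is correct and follows essentially the same route as the paper: the paper's $\cF_\FF^0(X)$ with basis $\{\delta_x\}$ and pre-inner product $\langle f,g\rangle_K=\langle C_Kf,g\rangle_0$ is exactly your free vector space with the form determined by $K$, and the quotient-complete construction, the Gram-matrix converse, and the density/isometry arguments for (b) and (c) all coincide. The only cosmetic difference is that the paper packages the sesquilinear form through the convolution operator $C_K$, which adds nothing essential to your version.
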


In the literature, a linearisation $(\cH,\Phi)$ was also called a \emph{Kolmogorov decomposition}, 
see T.~Constantinescu \cite{Constantinescu}, while in the 
machine learning community it is called a \emph{feature pair}, made up by a \emph{feature space} $\cH$ and
a \emph{feature map} $\Phi$.

Before approaching the proof of the Kolmogorov's Theorem a few notations and facts are in order. First, we consider the vector space of all functions on $X$
\begin{equation}\label{e:fex}
\cF_\FF(X):=\{f\colon X\ra \FF\mid f\mbox{ function }\},
\end{equation}
and its subspace of functions with finite support
\begin{equation}\label{e:fexo}
\cF_\FF^0 (X):=\{f\in \cF_\FF(X)\mid \supp(f)\mbox{ finite }\}.
\end{equation}
For each $x\in X$ we consider the seminorm $p_x$ on $\cF_\FF(X)$ defined by
\begin{equation}\label{e:pex}
p_x(f):=|f(x)|,\quad f\in \cF_\FF(X).
\end{equation}
It is easy to see that the vector space $\cF_\FF(X)$ endowed with the calibration $\{p_x\mid x\in X\}$ becomes a 
complete locally convex space and the underlying topology is nothing else than the topology of pointwise 
convergence.

The vector space $\cF_\FF^0(X)$ has a linear basis made up by $\delta_x$, for all $x\in X$,
with $\delta_x(y)=1$ if $x=y$ and $0$ elsewhere. Also, it has a natural inner product 
$\langle\cdot,\cdot\rangle_0$ defined by
\begin{equation}\label{e:ipo}
\langle f,g\rangle_0:=\sum_{x\in X}f(x)\overline{g(x)},\quad f,g\in \cF_\FF^0(X).
\end{equation}
However, an important observation is that the pairing $\langle f,g\rangle_0$ makes sense if at least one of 
$f$ or $g$ has finite support, the other could be any function in $\cF_\FF(X)$.

Let now $K\colon X\times X\ra \FF$ be a kernel. A \emph{convolution operator} $C_K\colon \cF_\FF^0(X)\ra \cF_\FF(X)$ can be defined by
\begin{equation}\label{e:ceka}
(C_Kf)(x):=\sum_{y\in X} K(x,y)f(y),\quad f\in \cF_0(X).
\end{equation}
In particular, from here it follows that
\begin{equation*}
C_K\delta_x=K_x,\quad x\in X,
\end{equation*}
where, for any $x\in X$ we let $K_x\in \cF_\FF(X)$ be defined by
\begin{equation}\label{e:kex}
K_x(y):=K(y,x),\quad y\in X.
\end{equation}
Observe that the mapping $K\mapsto C_K$ is one-to-one and
that, from \eqref{e:ceka}, it follows
\begin{equation*}
(C_Kf)(x)=\sum_{y\in X}K_y(x)f(y), \quad f\in \cF_0(X),
\end{equation*}
from which we get
\begin{equation*}
\ran(C_K)=\Span\{K_x\mid x\in X\}.
\end{equation*}

Recall that $K^*(x,y)=\overline{K(y,x)}$ is the \emph{adjoint kernel} of $K$. Then, the following holds
\begin{equation*}
\langle C_Kf,g\rangle_0=\langle f,C_{K^*}g\rangle_0,\quad f,g\in \cF_\FF^0(X).
\end{equation*}
In addition, $K$ is positive semidefinite if and only if $C_K\geq 0$, in the sense that 
$\langle C_Kf,f\rangle_0\geq 0$ for all $f\in\cF_\FF^0(X)$. This can be seen by letting, for arbitrary 
$f\in\cF_\FF^0(X)$, $\supp(f)=\{x_1,\ldots,x_n\}$ and $f(x_i)=\alpha_i$ for all $i=1,\ldots,n$, and then 
observing that
\begin{equation*}
\langle C_Kf,f\rangle_0=\sum_{i,j=1}^n \overline{\alpha_i}\alpha_j K(x_i,x_j).
\end{equation*}

\begin{proof}[Proof of Theorem~\ref{t:kolmo}] (a) (1)$\Ra$(2). On $\cF_\FF^0(X)$ define the pairing $\langle \cdot,\cdot\rangle_K$ by
\begin{equation*}
\langle f,g\rangle_K:=\langle C_Kf,g\rangle_0=\sum_{x,y\in X}K(x,y)f(y)\overline{g(x)},\quad f,g\in \cF_\FF^0(X).
\end{equation*}
Note that $\langle\cdot,\cdot\rangle_K$ is a pre-inner product: Hermitian/symmetric because $K$ is this way and positive semidefinite because $K$ is positive semidefinite. In particular, the Schwarz Inequality holds
\begin{equation*}
|\langle f,g\rangle_K|^2\leq \langle f,f\rangle_K \langle g,g\rangle_K,\quad f,g\in\cF_\FF^0(X).
\end{equation*}
As a consequence, we have that
\begin{equation}\label{e:neka}
\cN_K:=\{f\in\cF_\FF^0(X)\mid \langle f,f\rangle_K=0\}=\ker(C_K),
\end{equation}
is a vector subspace of $\cF^0_\FF(X)$ and, letting $\pi_K\colon \cF^0_\FF(X)\ra \cF^0_\FF(X)/\cN_K$ be the canonical 
projection, let $\cH$ be a completion of $\cF^0_\FF(X)/\cN_K$ endowed with the induced inner product 
$\langle\cdot,\cdot\rangle_K$ to a Hilbert space.

Let $\Phi\colon X\ra\cH$ be defined by
\begin{equation*}
\Phi(x):=\pi_K(\delta_x),\quad x\in X.
\end{equation*}
Then, for arbitrary $x,y\in X$ we have
\begin{align*}
\langle \Phi(x),\Phi(y)\rangle_\cH & = \langle \pi_K(\delta_x),\pi_K(\delta_y)\rangle_\cH = \langle \delta_x,\delta_y\rangle_K \\
& = \langle C_K\delta_x,\delta_y\rangle_0 = \sum_{z\in X}(C_K\delta_x)(z)\delta_y(z) \\
&= (C_K\delta_x)(y) =\sum_{z\in X}K(y,z)\delta_x(z)=K(y,x).
\end{align*}
This concludes the proof that the pair $(\cH,\Phi)$ is a linearisation of $K$

(2)$\Ra$(1). For arbitrary $n\in \NN$ and $x_1,\ldots,x_n\in X$ we have
\begin{equation*}
[K(x_i,x_j)]_{i,j=1}^n = [\langle \Phi(x_j),\Phi(x_i)\rangle_\cH]_{i,j=1}^n\geq 0,
\end{equation*}
because the latter is the Gram matrix of the vectors $\Phi(x_1),\ldots,\Phi(x_n)$. More explicitly, for any $\alpha_1,\ldots,\alpha_n\in\FF$ we have
\begin{align*}
\sum_{i,j=1}^n \overline{\alpha_i}\alpha_j \langle \Phi(x_j),\Phi(x_i)\rangle_\cH & = \langle \sum_{j=1}^n \alpha_j \Phi(x_j),\sum_{i=1}^n \alpha_i\Phi_i\rangle_\cH = \biggl\| \sum_{j=1}^n \alpha_j\Phi(x_j)\biggr\|_\cH^2\geq 0.
\end{align*} 

(b) Note that for the linearisation $(\cH,\Phi)$ constructed during the proof of the implication (1)$\Ra$(2) we have
$\Phi(X)=\{\pi_K(\delta_x)\mid x\in X\}$ which is total in $\cH$ because $\{\delta_x\mid x\in X\}$ is a linear basis 
of $\cF_\FF^0(X)$. 

(c) Let $m,n\in\NN$ be arbitrary and consider arbitrary $x_1,\ldots,x_m,y_1,\ldots,y_n\in X$ and $\alpha_1,\ldots,\alpha_m,\beta_1,\ldots,\beta_n\in\FF$. Then,
\begin{align*}
\langle \sum_{j=1}^m \alpha_j\Phi_1(x_j),\sum_{i=1}^n\beta_i \Phi_1(y_i)\rangle_{\cH_1} & =
\sum_{j=1}^m\sum_{i=1}^n \alpha_j\ol{\beta_i}\langle \Phi_1(x_j),\Phi_1(y_i)\rangle_{\cH_1} \\
\intertext{and, taking into account that $\langle \Phi_1(x_j),\Phi_1(y_i)\rangle_{\cH_1}=K(y_i,x_j)=
\langle \Phi_2(x_j),\Phi_2(y_i)\rangle_{\cH_1}$, for all $j=1,\ldots,m$ and all $i=1,\ldots,n$, this equals}
& = \sum_{j=1}^m\sum_{i=1}^n \alpha_j\ol{\beta_i}\langle \Phi_2(x_j),\Phi_2(y_i)\rangle_{\cH_2} \\
& = \langle \sum_{j=1}^m \alpha_j\Phi_2(x_j),\sum_{i=1}^n\beta_i \Phi_2(y_i)\rangle_{\cH_2},
\end{align*}
which shows that, letting the operator $U\colon \Span\Phi_1(X)\ra\Span\Phi_2(X)$ be defined by
\begin{equation*}
U(\sum_{j=1}^m\alpha_j\Phi_1(x_j))= \sum_{j=1}^n \alpha_j\Phi_2(x_j),
\end{equation*}
this correctly defines a linear operator, which is isometric, is densely defined, and has dense range, hence it can 
be uniquely extended to a unitary operator $U\colon \cH_1\ra\cH_2$ such that $U\Phi_1(x)=\Phi_2(x)$ for all 
$x\in X$.
\end{proof}

\subsection{Reproducing Kernels}
\begin{definition}\label{d:rkhs} 
Let $X$ be a nonempty set. A set $\cH$ is called a \emph{reproducing kernel Hilbert space} on $X$ if the 
following conditions hold.
\begin{itemize}
\item[(1)] $\cH\subseteq \cF_\FF(X)$ as a vector space.
\item[(2)] $(\cH,\langle\cdot,\cdot\rangle_\cH)$ is a Hilbert space.
\item[(3)] There exists a kernel $K\colon X\times X\ra\FF$ such that, with notation as in \eqref{e:kex}, we
have $K_x\in \cH$ for all $x\in X$.
\item[(4)] For each $f\in \cH$ and each $x\in X$ we have $f(x)=\langle f,K_x\rangle_\cH$.
\end{itemize}
\end{definition}

Property (4) is called the \emph{reproducing property} and the kernel $K$ is called a \emph{reproducing kernel}.

\begin{proposition}\label{p:rkhs}
Let $\cH$ and $K$ be as in Definition~\ref{d:rkhs}. Then,
\begin{itemize}
\item[(a)] $K(x,y)=\langle K_y,K_x\rangle_\cH$ for all $x,y\in X$.
\item[(b)] $K$ is Hermitian/symmetric and positive semidefinite.
\item[(c)] $\{K_x\mid x\in X\}$ is total in $\cH$.
\item[(d)] $K$ is uniquely associated to $\cH$.
\end{itemize}
\end{proposition}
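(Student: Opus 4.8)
The plan is to prove the four assertions essentially in the order stated, since each is short and several follow almost immediately from the reproducing property (4).

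For part (a), I would simply apply the reproducing property to the function $f=K_y\in\cH$, which is available by condition (3). Evaluating at the point $x$ gives $K_y(x)=\langle K_y,K_x\rangle_\cH$, and by the definition \eqref{e:kex} of $K_y$ we have $K_y(x)=K(x,y)$. This immediately yields $K(x,y)=\langle K_y,K_x\rangle_\cH$ for all $x,y\in X$, as claimed.

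For part (b), the cleanest route is to observe that (a) exhibits a linearisation of $K$ in the sense of Theorem~\ref{t:kolmo}: take $\cH$ itself as the Hilbert space and $\Phi\colon X\ra\cH$, $\Phi(x):=K_x$. Then (a) reads $K(x,y)=\langle \Phi(y),\Phi(x)\rangle_\cH$, which is exactly condition (2) in Theorem~\ref{t:kolmo}(a). By the implication (2)$\Ra$(1) of that theorem, $K$ is Hermitian/symmetric and positive semidefinite. Alternatively one can argue directly: Hermitian symmetry follows from $K(x,y)=\langle K_y,K_x\rangle_\cH=\overline{\langle K_x,K_y\rangle_\cH}=\overline{K(y,x)}$, and positive semidefiniteness from $\sum_{i,j}\overline{\alpha_i}\alpha_j K(x_i,x_j)=\bigl\|\sum_j\alpha_j K_{x_j}\bigr\|_\cH^2\geq 0$, but invoking Kolmogorov's Theorem is more economical.

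For part (c), I would argue by contradiction using the reproducing property. Suppose $f\in\cH$ is orthogonal to every $K_x$, $x\in X$. Then for each $x\in X$ the reproducing property gives $f(x)=\langle f,K_x\rangle_\cH=0$, so $f$ is the zero function. Since $\cH\subseteq\cF_\FF(X)$ as a vector space by condition (1), $f=0$ in $\cH$. Hence the orthogonal complement of $\Span\{K_x\mid x\in X\}$ is trivial, which means $\{K_x\mid x\in X\}$ is total in $\cH$. Part (d), uniqueness, is then immediate from (a): any reproducing kernel $K$ for the given $\cH$ must satisfy $K(x,y)=\langle K_y,K_x\rangle_\cH$, and the right-hand side is determined entirely by the inner product of $\cH$ together with the elements $K_x$; more directly, applying the reproducing property to any $f\in\cH$ shows $f(x)=\langle f,K_x\rangle_\cH$ forces $K_x$ to be the unique Riesz representative of the evaluation functional $f\mapsto f(x)$, so the $K_x$ and hence $K$ are uniquely determined by $\cH$. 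I do not expect a genuine obstacle here; the only point requiring a little care is part (d), where one should make explicit that the argument shows uniqueness of the kernel function $K$ as a map $X\times X\ra\FF$, not merely of the associated vectors, so I would phrase it via the Riesz representation of the evaluation functionals to make the determination manifest.
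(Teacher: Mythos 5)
Your proof is correct and follows essentially the same route as the paper: (a) and (c) are verbatim the paper's arguments, and your direct computation for (b) is exactly the paper's. The only cosmetic differences are that you optionally route (b) through Kolmogorov's Theorem and phrase (d) via uniqueness of the Riesz representative of the evaluation functionals, whereas the paper proves (d) by the one-line computation $K(x,y)=K_y(x)=\langle K_y,L_x\rangle_\cH=\ol{L(y,x)}=L(x,y)$; both variants are equivalent in substance.
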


\begin{proof} (a) This is straightforward from the reproducing property and the 
definition of $K_x$ as in \eqref{e:kex}: 
\begin{equation}\langle K_y,K_x\rangle_\cH=K_y(x)=K(x,y),\quad x,y\in X.\end{equation}

(b) First, by (a) we have
\begin{equation*}
K(x,y)=\langle K_y,K_x\rangle_\cH=\overline{\langle K_x,K_y\rangle_\cH}=\overline{K(y,x)},\quad x,y\in X.
\end{equation*}
Then, for arbitrary $n\in\NN$, $x_1,\ldots,x_n\in X$ and $\alpha_1,\ldots,\alpha_n\in\FF$, we have
\begin{align*}
\sum_{i,j=1}^n \ol{\alpha_i}\alpha_j K(x_i,x_j) & = \sum_{i,j=1}^n \ol{\alpha_i}\alpha_j \langle K_{x_j},K_{x_i}\rangle_\cH \\
& = \langle \sum_{j=1}^n \alpha_j K_{x_j},\sum_{i=1}^n \alpha_i K_{x_i}\rangle_\cH \\
& = \| \sum_{j=1}^n \alpha_j K_{x_j}\|_\cH^2\geq 0.
\end{align*}

(c) Let $f\in \cH$ be such that $f\perp K_x$ for all $x\in X$. By the reproducing property, for all $x\in X$ we have
$f(x)=\langle f,K_x\rangle_\cH=0$, hence $f=0$.

(d) Let $K$ and $L$ be two kernels associated to the same reproducing kernel Hilbert space $\cH$. Then, for all $x,y\in X$, using the reproducing property for $L$ and $K$, we have
\begin{equation*}
K(x,y)=K_y(x)=\langle K_y,L_x\rangle_\cH=\ol{L_x(y)}=\ol{L(y,x)}=L(x,y).\qedhere
\end{equation*}
\end{proof}

\begin{corollary} If $\cH$ is a reproducing kernel Hilbert space with reproducing kernel $K$ then the pair
$(\cH,K_\cdot)$, where $K_\cdot$ is the map $X\ni x \mapsto K_x\in\cH$, is a minimal linearisation of $K$.
\end{corollary}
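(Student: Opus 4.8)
The plan is to read the corollary off directly from the definition of linearisation given in Theorem~\ref{t:kolmo} together with the already-established Proposition~\ref{p:rkhs}. By the definition of a reproducing kernel Hilbert space, $\cH$ is a Hilbert space, and since condition~(3) in Definition~\ref{d:rkhs} guarantees $K_x\in\cH$ for every $x\in X$, the assignment $K_\cdot\colon X\ra\cH$, $x\mapsto K_x$, is a well-defined map. Thus $(\cH,K_\cdot)$ is a legitimate candidate for a linearisation of $K$, and it remains only to verify the defining inner-product identity and the minimality (totality) requirement.

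For the identity, I would invoke Proposition~\ref{p:rkhs}(a), which asserts $K(x,y)=\langle K_y,K_x\rangle_\cH$ for all $x,y\in X$. Setting $\Phi=K_\cdot$, this is exactly the relation $K(x,y)=\langle\Phi(y),\Phi(x)\rangle_\cH$ required in assertion~(2) of Theorem~\ref{t:kolmo}(a), so $(\cH,K_\cdot)$ is a linearisation of $K$. Minimality then demands that $\Phi(X)=\{K_x\mid x\in X\}$ be total in $\cH$, and this is precisely the content of Proposition~\ref{p:rkhs}(c). Combining the two observations yields the claim.

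Since every ingredient has already been proved, there is no substantive obstacle here; the corollary is essentially a repackaging of the earlier results into the language of Kolmogorov's Theorem. The only point requiring a moment's care is bookkeeping on the order of the arguments in the inner product, namely confirming that the linearisation convention $K(x,y)=\langle\Phi(y),\Phi(x)\rangle_\cH$ matches Proposition~\ref{p:rkhs}(a) exactly rather than its complex conjugate; once this is checked, the proof is complete.
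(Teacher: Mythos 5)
Your proof is correct and is exactly the argument the paper intends: the corollary is stated without proof precisely because it follows immediately from Proposition~\ref{p:rkhs}(a) (the identity $K(x,y)=\langle K_y,K_x\rangle_\cH$, matching the linearisation convention of Theorem~\ref{t:kolmo}) and Proposition~\ref{p:rkhs}(c) (totality of $\{K_x\mid x\in X\}$, which is minimality). Nothing is missing.
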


For the following proposition, recall that $\cF_\FF(X)$ is a complete locally convex space with the calibration 
defined as in \eqref{e:pex} which defines the topology of pointwise convergence.

\begin{proposition} \label{p:conte}
Let $\cH$ be a Hilbert space included in $\cF_\FF(X)$ as a 
vector space. The following assertions are equivalent.
\begin{itemize}
\item[(i)] $\cH$ is a reproducing kernel Hilbert space on $X$.
\item[(ii)] $\cH\hookrightarrow \cF_\FF(X)$, that is, $\cH$ is continuously included in $\cF_\FF(X)$.
\item[(iii)] For each $x\in X$ the functional of evaluation at $x$, $\mathrm{Ev}_x\colon \cH\ra\FF$ defined by
$\cH\ni h\mapsto \mathrm{Ev}_x(h)=h(x)\in\FF$, is continuous.
\end{itemize}
\end{proposition}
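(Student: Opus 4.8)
The plan is to establish the three assertions by proving the equivalence (i)$\LRa$(iii) directly, using Cauchy--Schwarz in one direction and the Riesz representation theorem in the other, and then to obtain (ii)$\LRa$(iii) as a soft consequence of the fact that the topology on $\cF_\FF(X)$ is the one generated by the calibration $\{p_x\mid x\in X\}$ from \eqref{e:pex}.

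First I would prove (i)$\Ra$(iii). Assuming $\cH$ is a reproducing kernel Hilbert space with reproducing kernel $K$, the reproducing property gives $\mathrm{Ev}_x(h)=h(x)=\langle h,K_x\rangle_\cH$ for all $h\in\cH$ and all $x\in X$. Since $K_x\in\cH$, the Cauchy--Schwarz inequality yields $|\mathrm{Ev}_x(h)|\leq \|K_x\|_\cH\,\|h\|_\cH$, so each $\mathrm{Ev}_x$ is a bounded, hence continuous, linear functional. For the converse (iii)$\Ra$(i), I would observe that $\cH$ already satisfies conditions (1) and (2) of Definition~\ref{d:rkhs} by hypothesis, and that continuity of $\mathrm{Ev}_x$ allows an appeal to the Riesz representation theorem: for each $x\in X$ there is a unique $g_x\in\cH$ with $\mathrm{Ev}_x(h)=\langle h,g_x\rangle_\cH$ for all $h\in\cH$. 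Defining $K\colon X\times X\ra\FF$ by $K(u,x):=g_x(u)$ makes $K_x=g_x\in\cH$, giving condition (3), while $f(x)=\mathrm{Ev}_x(f)=\langle f,g_x\rangle_\cH=\langle f,K_x\rangle_\cH$ is exactly the reproducing property, giving condition (4). Thus $\cH$ is a reproducing kernel Hilbert space.

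It remains to handle (ii)$\LRa$(iii). Here I would invoke the standard criterion that a linear map from a topological vector space into a locally convex space with calibration $\{q_\alpha\}$ is continuous if and only if each composition $q_\alpha$ with the map is continuous. Applied to the inclusion $\iota\colon\cH\hra\cF_\FF(X)$ and the seminorms $p_x$, this says that $\iota$ is continuous precisely when each map $h\mapsto p_x(\iota(h))=|h(x)|$ is continuous on $\cH$. Since $|h(x)|=|\mathrm{Ev}_x(h)|$ and a linear functional on a Hilbert space is continuous if and only if its modulus is continuous, this is exactly the statement that every $\mathrm{Ev}_x$ is continuous, which is (iii).

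I do not expect any deep obstacle, as each implication rests on a standard tool. The point requiring the most care is (ii)$\LRa$(iii): one must phrase continuity of the inclusion into $\cF_\FF(X)$ correctly in terms of the generating seminorms rather than via a metric, since $\cF_\FF(X)$ carries the (generally non-metrizable) topology of pointwise convergence, and then reconcile the seminorm condition $h\mapsto|h(x)|$ with continuity of the linear functional $\mathrm{Ev}_x$.
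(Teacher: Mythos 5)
Your proposal is correct and uses essentially the same ingredients as the paper's proof: the Cauchy--Schwarz estimate $|h(x)|=|\langle h,K_x\rangle_\cH|\leq\|K_x\|_\cH\|h\|_\cH$ for (i)$\Ra$(iii), the Riesz representation theorem to manufacture $K_x$ for (iii)$\Ra$(i), and the seminorm calibration $\{p_x\}$ to relate continuity of the inclusion to continuity of the evaluations. The only difference is organizational --- you prove two biconditionals where the paper runs the cycle (i)$\Ra$(ii)$\Ra$(iii)$\Ra$(i), handling (ii)$\Ra$(iii) by a sequential argument rather than by the general locally convex criterion you invoke --- so no substantive comparison is needed.
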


\begin{proof} (i)$\Ra$(ii). If $\cH$ is a reproducing kernel Hilbert space on $X$ then 
by the reproducing property, given $x\in X$ arbitrary, for each $f\in\cH$ we have 
$f(x)=\langle f,K_x\rangle_\cH$ hence, with notation as in \eqref{e:pex}, we have
\begin{equation*}
p_x(f)=|f(x)|=|\langle f,K_x\rangle_\cH|\leq \| K_x\|_\cH\, \|f\|_\cH,
\end{equation*} 
hence $\cH\hookrightarrow \cF_\FF(X)$.

(ii)$\Ra$(iii). Assume $\cH\hookrightarrow \cF_\FF(X)$ and let $x\in X$ be arbitrary. Then, for each sequence $(h_n)_n$ of functions in $\cH$ that converges to $0$ with respect to the norm $\|\cdot\|_\cH$, it follows that
\begin{equation*}
|\mathrm{Ev}_x(h_n)|=|h_n(x)|=p_x(h_n)\ra 0\mbox{ as }n\ra\infty,
\end{equation*}
hence the linear functional $\mathrm{Ev}_x$ is continuous on $\cH$.

(iii)$\Ra$(i). Assume that for each $x\in X$ the linear functional $\mathrm{Ev}_x\colon \cH\ra\FF$ is continuous. 
Then, by Riesz's Lemma, there exists a unique vector $\zeta_x\in\cH$ such that
\begin{equation*} h(x)=\mathrm{Ev}_x(h)=\langle h,\zeta_x\rangle_\cH,\quad h\in\cH.
\end{equation*}
Define the kernel $K(x,y):=\langle \zeta_y,\zeta_x\rangle_\cH$, for all $x,y\in \cH$. We show that $\cH$ is 
a reproducing kernel Hilbert space on $X$ with reproducing kernel $K$. In order to see this, with respect to 
Definition~\ref{d:rkhs}, the properties (1) and (2) are clearly satisfied. 

For arbitrary $x,y\in X$, $\zeta_x\in\cH$ by construction and then
\begin{equation*}
\zeta_x(y)=\langle \zeta_x,\zeta_y\rangle_\cH=K(y,x)=K_x(y),
\end{equation*}
which proves that $K_x=\zeta_x\in\cH$ for all $x\in X$, hence the property (3) is satisfied as well.
Finally, for arbitrary $f\in\cH$ and $x\in X$, by the definition of $\zeta_x$ we have
\begin{equation*}
f(x)=\langle f,\zeta_x\rangle_\cH=\langle f,K_x\rangle_\cH,
\end{equation*}
hence property (4) is satisfied as well.
\end{proof}

\begin{remark}\label{r:ls} The characterisation of reproducing kernel Hilbert spaces $\cH$ on a set $X$ by 
the fact that $\cH\hookrightarrow \cF_\FF(X)$ makes the connection with a parallel theory 
developed by L.~Schwartz in \cite{Schwartz}. There, given a quasi complete 
(meaning that any closed bounded subset is complete) locally convex space $\cF$, 
the object of investigation is provided by Hilbert spaces continuously included in $\cF$.
\end{remark}

\begin{proposition}\label{p:unic}
Let $\cH$ be a reproducing kernel space on $X$ with reproducing kernel $K$. Then $\cH$ is uniquely 
determined by $K$.
\end{proposition}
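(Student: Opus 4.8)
The plan is to show that if $\cH_1$ and $\cH_2$ are two reproducing kernel Hilbert spaces on $X$ sharing the same reproducing kernel $K$, then they coincide as sets of functions and carry the same inner product. I would begin by recording, via Proposition~\ref{p:rkhs}, what the hypothesis gives in each space: for $i=1,2$ we have $K_x\in\cH_i$ for all $x\in X$, the family $\{K_x\mid x\in X\}$ is total in $\cH_i$ (part (c)), and $\langle K_y,K_x\rangle_{\cH_i}=K(x,y)$ for all $x,y\in X$ (part (a)). Next introduce the common subspace $\cH_0:=\Span\{K_x\mid x\in X\}\subseteq\cF_\FF(X)$, which depends only on $K$ and not on any ambient space. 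The inner-product formula shows that $\langle\cdot,\cdot\rangle_{\cH_1}$ and $\langle\cdot,\cdot\rangle_{\cH_2}$ agree on all of $\cH_0$, since both take the identical value $K(y_i,x_j)$ on the spanning vectors; denote this common restriction by $\langle\cdot,\cdot\rangle_0$.

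The heart of the argument is to upgrade this agreement on the dense subspace $\cH_0$ to an identity of the full spaces, and here I would lean on the continuity of the inclusions $\cH_i\hookrightarrow\cF_\FF(X)$ furnished by Proposition~\ref{p:conte}. Given $f\in\cH_1$, totality provides a sequence $(f_n)_n$ in $\cH_0$ with $\|f_n-f\|_{\cH_1}\to 0$. This sequence is Cauchy for $\langle\cdot,\cdot\rangle_0$, hence Cauchy in $\cH_2$, so it converges in $\cH_2$ to some $g\in\cH_2$. Because both inclusions into $\cF_\FF(X)$ are continuous and the topology there is that of pointwise convergence, $f_n(x)\to f(x)$ and $f_n(x)\to g(x)$ for every $x\in X$; uniqueness of pointwise limits forces $f=g$ as functions, so $f\in\cH_2$. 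Moreover $\|f\|_{\cH_2}=\lim_n\|f_n\|_0=\|f\|_{\cH_1}$, so the inclusion $\cH_1\subseteq\cH_2$ is isometric. The symmetric argument yields $\cH_2\subseteq\cH_1$, whence $\cH_1=\cH_2$ as sets; since their norms coincide, the polarisation identity shows that the inner products coincide as well.

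The step I expect to be the main obstacle --- and the one that genuinely needs the theory built up so far --- is precisely the passage from the dense subspace to the full space. An abstract completion of $(\cH_0,\langle\cdot,\cdot\rangle_0)$ is determined only up to unitary equivalence, so it is not a priori clear that the two completions realise the same functions on $X$. The continuity of the inclusion into $\cF_\FF(X)$ (Proposition~\ref{p:conte}) is exactly what pins the abstract limit down to a concrete function via pointwise convergence, thereby ruling out a spurious, merely isometric identification and forcing genuine equality of the two function spaces together with their inner products.
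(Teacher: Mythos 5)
Your proof is correct and follows essentially the same route as the paper: agreement of the inner products on the dense span $\Span\{K_x\mid x\in X\}$, then a Cauchy-sequence argument pinned down by pointwise convergence to identify the limits in the two spaces. The only addition is that you make explicit the isometry and the polarisation step identifying the inner products, which the paper leaves implicit.
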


\begin{proof} Let $\cG\subseteq \cF_\FF(X)$ be another reproducing kernel Hilbert space on $X$ with the same 
kernel $K$. By Proposition~\ref{p:rkhs}, the set $\{K_x\mid x\in X\}$ is total, that is, the vector space 
$\Span\{K_x\mid x\in X\}$ is dense, in both $\cH$ and $\cG$. In addition, for any $n\in\NN$, $x_1,\ldots,x_n\in X$, and $\alpha_1,\ldots,\alpha_n\in\FF$, letting 
\begin{equation*}
f=\sum_{j=1}^n \alpha_jK_{x_j}\in \Span\{K_x\mid x\in X\}\subseteq \cH\cap \cG,
\end{equation*}
we have
\begin{align*}
\|f\|_\cH^2 & = \langle f,f\rangle_\cH= \sum_{i,j=1}^n \alpha_i\ol{\alpha_j} \langle K_{x_i},K_{x_j}\rangle_\cH \\
& = \sum_{i,j=1}^n \alpha_i\ol{\alpha_j} K(x_j,x_i)  = \sum_{i,j=1}^n \alpha_i\ol{\alpha_j} \langle K_{x_i},K_{x_j}\rangle_\cG \\
& = \langle f,f\rangle_\cG=\|f\|_\cG^2,
\end{align*}
hence, on $\Span\{K_x\mid x\in X\}$ the two norms $\|\cdot\|_\cH$ and $\|\cdot\|_\cG$ coincide.

Let $f\in\cH$ be arbitrary. Then there exists a sequence $(f_n)_n$ of functions $f_n\in \Span\{K_x\mid x\in X\}$ 
such that $f_n\ra f$ as $n\ra\infty$ with respect to the norm $\|\cdot\|_\cH$. Since the two norms coincide on 
$\Span\{K_x\mid x\in X\}$, the sequence $(f_n)_n$ is Cauchy with respect to the norm $\|\cdot\|_\cG$ as 
well and hence there exists $g\in\cG$ such that $f_n\ra g$ as $n\ra\infty$ with respect to the norm $\|\cdot\|_\cG$.
Since convergence in $\cH$ and $\cG$ imply pointwise convergence, it follows that $f_n(x)\ra f(x)$ and 
$f_n(x)\ra g(x)$ as $n\ra\infty$ for all $x\in X$, hence $f=g\in\cG$. This shows that $\cH\subseteq \cG$. By 
symmetry we have $\cG\subseteq \cH$ as well, hence $\cH=\cG$.
\end{proof}

\begin{remark}
In general, reproducing kernel Hilbert spaces may have any dimension, in particular they may be nonseparable.
Let $X$ be an arbitrary nonempty set and define
\begin{equation*}
\cH=\ell^2_\FF(X):=\{f\colon X\ra\FF\mid \sum_{x\in X}|f(x)|^2<\infty\},
\end{equation*}
with inner product and norm
\begin{equation*}
\langle f,g\rangle_\cH:=\sum_{x\in X} f(x)\ol{g(x)},\quad \|f\|_\cH^2:=\sum_{x\in X}|f(x)|^2,\quad f,g\in \cH.
\end{equation*}
Then $\{\delta_x\mid x\in X\}$ is an orthonormal basis of $\cH$ hence $\dim_{\mathrm{H}}(\cH)=\card(X)$. 

$\cH$ is a reproducing kernel Hilbert space since for each $x\in X$ the evaluation functional $h\mapsto h(x)$ is 
continuous.  Actually, the reproducing kernel of $\cH$ is the identity kernel $K$,
\begin{equation*}
K(x,y)=\begin{cases} 1, & x=y, \\ 0, & x\neq y,
\end{cases}
\end{equation*}
equivalently, $K_x=\delta_x$ for all $x\in X$.
\end{remark}

The next theorem says that the reproducing kernel $K$ 
can be recovered explicitly from any orthonormal basis of its reproducing kernel Hilbert space $\cH_K$. In the 
proof of Mercer's Theorem, this fact, together with the fact that the convergence of the series is pointwise, 
will play an important role. In the second part of this theorem we can get another characterisation of those Hilbert 
spaces of functions on a given set $X$ which are reproducing kernels, in terms of a Bessel type condition 
of orthonormal bases.

\begin{theorem}\label{t:on}   Let $\cH$ be a Hilbert space contained in $\cF_\FF(X)$, for some set $X$, with all 
algebraic operations. 

\nr{1} If $\cH$ is a reproducing kernel Hilbert space with reproducing kernel $K$ then, for any orthonormal basis 
$\{e_j\mid j\in\cJ\}$ of $\cH$ and for all $x,y\in X$, we have
\begin{equation*}
K(x,y)=\sum_{j\in\cJ} e_j(x)\ol{e_j(y)},
\end{equation*}
where the convergence is pointwise in the sense of summability. 

\nr{2} If there exists an orthonormal basis $\{e_j\}_{j\in\cJ}$ of $\cH$ such that, for all $x\in X$ we have
\begin{equation}\label{e:suje}
\sum_{j\in\cJ} |e_j(x)|^2<\infty,
\end{equation}
then $\cH$ is a reproducing kernel Hilbert space on $X$. More precisely, for each $x\in X$ there exists uniquely a vector $K_x\in\cH$ such that
\begin{equation*}
K_x:=\sum_{j\in \cJ} \ol{e_j(x)}e_j,\end{equation*}
where the series converges in $\cH$, and letting $K(x,y)=\langle K_y,K_x\rangle_\cH$, it follows that $K$ is the reproducing 
kernel of $\cH$.
\end{theorem}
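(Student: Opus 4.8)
The plan is to handle both parts through the Fourier expansion with respect to the orthonormal basis $\{e_j\mid j\in\cJ\}$, reading the kernel off the coordinates of the vectors $K_x$. For part (1), I would start from the reproducing property applied to the basis functions: for each $j\in\cJ$ and each $x\in X$ we have $\langle e_j,K_x\rangle_\cH=e_j(x)$, hence $\langle K_x,e_j\rangle_\cH=\ol{e_j(x)}$. Expanding $K_x\in\cH$ in the orthonormal basis then gives $K_x=\sum_{j\in\cJ}\ol{e_j(x)}\,e_j$, the series converging in $\cH$. Using Proposition~\ref{p:rkhs}(a), namely $K(x,y)=\langle K_y,K_x\rangle_\cH$, together with Parseval's identity $\langle K_y,K_x\rangle_\cH=\sum_{j\in\cJ}\langle K_y,e_j\rangle_\cH\langle e_j,K_x\rangle_\cH$, I would obtain
\begin{equation*}
K(x,y)=\sum_{j\in\cJ}\ol{e_j(y)}\,e_j(x)=\sum_{j\in\cJ}e_j(x)\ol{e_j(y)},
\end{equation*}
the convergence being in the sense of summability of the net of finite partial sums, exactly as Parseval provides. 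It is worth recording that, in particular, $\sum_{j\in\cJ}|e_j(x)|^2=\|K_x\|_\cH^2=K(x,x)<\infty$, so the Bessel-type hypothesis~\eqref{e:suje} of part (2) is automatically satisfied by every orthonormal basis of an RKHS; part (2) is thus a genuine converse.

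For part (2), I would first use the hypothesis~\eqref{e:suje} to note that, for each fixed $x$, the family $(\ol{e_j(x)})_{j\in\cJ}$ lies in $\ell^2(\cJ)$; since $\{e_j\}_{j\in\cJ}$ is an orthonormal basis, the Riesz--Fischer theorem guarantees that $K_x:=\sum_{j\in\cJ}\ol{e_j(x)}\,e_j$ converges in $\cH$, and its value is uniquely determined. A direct computation gives $\langle e_k,K_x\rangle_\cH=e_k(x)$ for every $k\in\cJ$, and hence, by continuity of the inner product, $\langle f,K_x\rangle_\cH=\sum_{j\in\cJ}\langle f,e_j\rangle_\cH\,e_j(x)$ for every $f\in\cH$. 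To conclude that $\cH$ is an RKHS with reproducing kernel $K(x,y)=\langle K_y,K_x\rangle_\cH$, by Proposition~\ref{p:conte} it suffices to show that each evaluation functional $\mathrm{Ev}_x$ is continuous, and the Bessel bound
\begin{equation*}
\Bigl|\sum_{j\in\cJ}\langle f,e_j\rangle_\cH\,e_j(x)\Bigr|\le\|f\|_\cH\Bigl(\sum_{j\in\cJ}|e_j(x)|^2\Bigr)^{1/2}
\end{equation*}
shows that the functional $f\mapsto\langle f,K_x\rangle_\cH$ is bounded, with norm at most $(\sum_{j\in\cJ}|e_j(x)|^2)^{1/2}$.

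The main obstacle is the final identification $\mathrm{Ev}_x(f)=f(x)=\langle f,K_x\rangle_\cH$ for \emph{every} $f\in\cH$, equivalently the pointwise convergence $f(x)=\sum_{j\in\cJ}\langle f,e_j\rangle_\cH\,e_j(x)$ of the Fourier expansion of $f$. On the dense subspace $\Span\{e_j\mid j\in\cJ\}$ this identity holds termwise, because there the algebraic operations of $\cH$ and of $\cF_\FF(X)$ are the same pointwise operations; the delicate point is that convergence of $\sum_{j}\langle f,e_j\rangle_\cH e_j$ to $f$ in the norm of $\cH$ does not of itself force convergence at the single point $x$, so one cannot naively pass to the limit. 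The heart of the proof is therefore to upgrade this termwise identity from the dense span to all of $\cH$, that is, to establish the continuity of each $\mathrm{Ev}_x$ (equivalently, the pointwise convergence just mentioned); once this is in hand, Proposition~\ref{p:conte} yields that $\cH$ is an RKHS and Proposition~\ref{p:rkhs} identifies $K$ as its reproducing kernel. This is precisely the step where the interplay between norm convergence and pointwise convergence must be handled with care, and where the hypothesis~\eqref{e:suje} does its real work.
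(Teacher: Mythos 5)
Part (1) of your proposal is correct and is essentially the paper's own argument: use the reproducing property to identify the Fourier coefficients $\langle K_x,e_j\rangle_\cH=\ol{e_j(x)}$, expand $K_x$ and $K_y$ in the basis, and apply Parseval to $K(x,y)=\langle K_y,K_x\rangle_\cH$. Your side remark that $\sum_{j}|e_j(x)|^2=\|K_x\|_\cH^2=K(x,x)<\infty$, so that \eqref{e:suje} holds automatically for every orthonormal basis of an RKHS, is also correct and worth keeping.

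Part (2) is where the problem lies. You set up the right objects (Riesz--Fischer for $K_x$, the Bessel bound showing that $f\mapsto\langle f,K_x\rangle_\cH$ is a bounded functional agreeing with $\mathrm{Ev}_x$ on $\Span\{e_j\mid j\in\cJ\}$), and you correctly isolate the crux, namely proving $f(x)=\langle f,K_x\rangle_\cH$ for \emph{every} $f\in\cH$. But you then stop, announcing that this is where the hypothesis ``does its real work'' without supplying any argument. That is a genuine gap, and it is not a routine density step you are entitled to wave at: a bounded functional that agrees with $\mathrm{Ev}_x$ on a dense subspace need not equal $\mathrm{Ev}_x$ everywhere, precisely because $\mathrm{Ev}_x$ is not yet known to be continuous. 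Worse, the step cannot be supplied from the stated hypotheses alone. Take $X=\NN\cup\{\omega\}$, let $\chi$ be a discontinuous linear functional on $\ell^2_\FF(\NN)$ that vanishes on the finitely supported sequences, set $\tilde f(n)=f(n)$ and $\tilde f(\omega)=\chi(f)$, and let $\cH=\{\tilde f\mid f\in\ell^2_\FF(\NN)\}$ carry the $\ell^2$ inner product. Then $\{\tilde\delta_n\}_n$ is an orthonormal basis of $\cH$ satisfying \eqref{e:suje} at every point of $X$ (indeed $\tilde\delta_n(\omega)=\chi(\delta_n)=0$), yet $\mathrm{Ev}_\omega=\chi$ is unbounded, so $\cH$ is not a reproducing kernel Hilbert space. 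For what it is worth, the paper's own proof of (2) passes over exactly the same point: it infers $f(x)=\sum_j\langle f,e_j\rangle_\cH e_j(x)$ from the absolute convergence of the right-hand side, which is a non sequitur. So your diagnosis of where the difficulty sits is accurate, but your proposal as written does not prove part (2), and some additional hypothesis --- for instance that every $f\in\cH$ is the pointwise sum of its Fourier series, which is how the result is actually applied to the Gaussian kernel in Section~\ref{s:wgk}, where the space is constructed so that this holds by definition --- is needed for the conclusion to be true at all.
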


\begin{proof} (1) Let $x,y\in X$ be arbitrary. By considering the Fourier expansions of the functions $K_x$ and $K_y$, which are in $\cH$, with respect to the orthonormal basis $\{e_j\mid j\in \cJ\}$ we have
\begin{equation*}
K_x=\sum_{j\in\cJ} \langle K_x,e_j\rangle_\cH e_j,\quad K_y=\sum_{j\in\cJ} \langle K_y,e_j\rangle_\cH e_j,
\end{equation*}
where the convergence holds with respect to the norm $\|\cdot\|_\cH$ in the sense of summability. Actually, by 
the reproducing property we know that the Fourier coefficients are
\begin{equation}\label{e:olex}
\ol{e_j(x)}=\langle K_x,e_j\rangle_\cH,\quad \ol{e_j(y)}=\langle K_y,e_j\rangle_\cH,
\end{equation}
and that they are nontrivial only for countably many $j\in \cJ$ hence, in particular, the Fourier series are absolutely converging series in the classical sense. Then
\begin{align*}
K(x,y) & = \langle K_y,K_x\rangle_\cH =\bigl\langle \sum_{j\in\cJ} \langle K_y,e_j\rangle_\cH e_j, \sum_{l\in\cJ} \langle K_x,e_l\rangle_\cH e_l\bigr\rangle_\cH \\
\intertext{hence, since convergence holds in $\cH$, in view of \eqref{e:olex}, this equals}
& = \sum_{j,l\in\cJ} \ol{e_j(y)}e_l(x)\langle e_j,e_l\rangle_\cH  = \sum_{j\in\cJ}e_j(x) \ol{e_j(y)},
\end{align*}
the series being pointwise  convergent.

(2) Let $y\in X$ be arbitrary but fixed. By \eqref{e:suje} for $y$, it follows that we have a Fourier series 
\begin{equation}\label{e:kays}
K_y:=\sum_{j\in \cJ} \ol{e_j(y)}e_j
\end{equation}
that converges in the strong topology of $\cH$. Now, if $f\in \cH$ is arbitrary, using the Fourier series of $f$, we have
\begin{equation}\label{e:fesumej}
f=\sum_{j\in\cJ} \langle f,e_j\rangle_\cH e_j,
\end{equation}
where the sum converges in the strong topology of $\cH$ and the Parseval Identity holds, that is,
\begin{equation*}
\|f\|_\cH^2=\sum_{j\in\cJ} |\langle f,e_j\rangle_\cH|^2.
\end{equation*} 
For arbitrary $x\in X$ we observe that, by the Cauchy Inequality, the Parseval Identity, and \eqref{e:suje},
we have
\begin{align*}
\sum_{j\in\cJ} |\langle f,e_j\rangle_\cH e_j(x)| & = \sum_{j\in\cJ} |\langle f,e_j\rangle_\cH|  |e_j(x)| \\
& \leq
\bigl(\sum_{j\in\cJ} |\langle f,e_j\rangle_\cH|^2\bigr)^{1/2 }\bigl(\sum_{j\in\cJ} |e_j(x)|^2\bigr)^{1/2} = C_x \|f\|_\cH,
\end{align*}
hence the sum $\sum_{j\in\cJ} \langle f,e_j\rangle_\cH e_j(x)$ converges absolutely and then we get that the series in \eqref{e:fesumej} converges pointwise as well and
\begin{equation*}
f(x)=\sum_{j\in\cJ} \langle f,e_j\rangle_\cH e_j(x).
\end{equation*}
Then,
\begin{align*}
\langle f,K_x\rangle_\cH & = \bigl\langle \sum_{i\in \cJ}\langle f,e_i\rangle_\cH e_i,\sum_{j\in\cJ} \ol{e_j(x)}
e_j\bigl\rangle_\cH \\ 
& = \sum_{i\in\cJ}\sum_{j\in\cJ} \langle f,e_i\rangle_\cH e_j(x)\langle e_i,e_j\rangle_\cH \\
& = \sum_{j\in\cJ}\langle f,e_j\rangle_\cH e_j(x) =f(x).
\end{align*}
Letting $K(x,y):=\langle K_y,K_x\rangle_\cH$, for all $x,y\in X$, it follows that $\cH$ 
is the reproducing kernel Hilbert space associated to $K$.
\end{proof}

\begin{remark} Given a Hilbert space $\cH$, a family of vectors $\{f_j\mid j\in\cJ\}$ in $\cH$ is called 
a \emph{Parseval frame} if for any $h\in\cH$ the Parseval Identity holds
\begin{equation*}
\|h\|_\cH^2=\sum_{j\in\cJ} |\langle h,f_j\rangle_\cH|^2.
\end{equation*}
Clearly any orthonormal basis is a Parseval frame. However,
the concept of a Parseval frame is more general than that of an orthonormal basis. For example, if 
$\{e_j\mid j\in\cJ\}$ and $\{g_j\mid j\in \cJ\}$ are two orthonormal bases of $\cH$ then the family 
$\{\frac{1}{\sqrt{2}} e_j,\frac{1}{\sqrt{2}}g_j\mid j\in\cJ\}$ is a Parseval frame.

Theorem~\ref{t:on} is true for Parseval frames as well, but the proof is more involved, see 
 \cite{PaulsenRaghupathi}.
\end{remark}

The main theorem of this section is the next one saying that to any Hermitian/symmeric positive semidefinite 
kernel one can associate a (unique, by Proposition~\ref{p:unic}) reproducing kernel Hilbert space. 
Here, by means of  Theorem \ref{t:kolmo}, we take an important 
shortcut with respect to the classical proofs.

\begin{theorem}[E.H.~Moore \cite{Moore}]\label{t:moore} 
For any Hermitian/symmetric positive semidefinite kernel 
$K\colon X\times X\ra\FF$ there exists a unique reproducing kernel Hilbert space $\cH_K$ such that $K$ is its 
reproducing kernel.
\end{theorem}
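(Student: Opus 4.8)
The plan is to exploit Kolmogorov's Theorem~\ref{t:kolmo} to realise the abstract feature space concretely as a space of functions on $X$, thereby shortcutting the usual direct construction. First I would invoke Theorem~\ref{t:kolmo}(a),(b) to obtain a minimal linearisation $(\cH,\Phi)$ of $K$, so that $K(x,y)=\langle\Phi(y),\Phi(x)\rangle_\cH$ for all $x,y\in X$ and $\Phi(X)$ is total in $\cH$. The idea is then to turn $\cH$ into a reproducing kernel Hilbert space of honest functions by means of the canonical map $V\colon\cH\ra\cF_\FF(X)$ defined by $(Vh)(x):=\langle h,\Phi(x)\rangle_\cH$ for $h\in\cH$ and $x\in X$. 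This map is linear, and it is injective precisely because $\Phi(X)$ is total: if $Vh=0$ then $h\perp\Phi(x)$ for every $x\in X$, hence $h=0$.

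Next I would set $\cH_K:=\ran(V)\subseteq\cF_\FF(X)$ and transport the Hilbert space structure of $\cH$ across $V$, that is, declare $\langle Vh,Vg\rangle_{\cH_K}:=\langle h,g\rangle_\cH$. Injectivity of $V$ guarantees that this is well defined, and by construction $V$ becomes a unitary operator from $\cH$ onto $\cH_K$, so that $\cH_K$ is a Hilbert space contained in $\cF_\FF(X)$ as a vector space; this secures conditions (1) and (2) of Definition~\ref{d:rkhs}. The heart of the verification is the computation $(V\Phi(x))(y)=\langle\Phi(x),\Phi(y)\rangle_\cH=K(y,x)=K_x(y)$, which shows that $V\Phi(x)=K_x$; in particular $K_x\in\cH_K$ for each $x\in X$, giving condition (3).

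For the reproducing property (4), I would take an arbitrary $f\in\cH_K$, write $f=Vh$ with $h\in\cH$, and compute $\langle f,K_x\rangle_{\cH_K}=\langle Vh,V\Phi(x)\rangle_{\cH_K}=\langle h,\Phi(x)\rangle_\cH=(Vh)(x)=f(x)$, using only the definition of the transported inner product together with the identity $V\Phi(x)=K_x$ established above. Thus $\cH_K$ is a reproducing kernel Hilbert space whose reproducing kernel is $K$, which establishes existence; uniqueness is then immediate from Proposition~\ref{p:unic}. The step I expect to be the main obstacle — or rather the conceptual crux — is the well-definedness of the transported inner product, which rests entirely on the injectivity of $V$ and hence on the minimality clause of Kolmogorov's Theorem; without minimality the map $V$ could collapse distinct vectors of $\cH$ and the realisation of $\cH_K$ as a genuine space of functions on $X$ would break down.
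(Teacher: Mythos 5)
Your proof is correct and is essentially identical to the paper's: both obtain a minimal Kolmogorov linearisation, realise the feature space as functions on $X$ via $h\mapsto\langle h,\Phi(\cdot)\rangle$, transport the inner product onto the range (injectivity coming from minimality), verify $V\Phi(x)=K_x$ and the reproducing property, and appeal to Proposition~\ref{p:unic} for uniqueness. No gaps.
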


\begin{proof} By Theorem \ref{t:kolmo}, 
there exists a minimal linearisation $(\cG,\Phi)$ of $K$, that is, 
$\cG$ is a Hilbert space, $\Phi\colon X\ra\cG$, $\Span\Phi(X)$ is dense in $\cG$, and 
$K(x,y)=\langle \Phi(y),\Phi(x)\rangle_\cG$ for all $x,y\in X$. Consider the map $U\colon \cG\ra\cF_\FF(X)$ 
defined by
\begin{equation}\label{e:ug}
\cG\ni g\mapsto Ug:= \langle g,\Phi(\cdot)\rangle_\cG\in\cF_\FF(X).
\end{equation}
This map is clearly linear and injective, because, if for $g\in\cG$ we have $\langle g,\Phi(x)\rangle_\cG=0$ for all 
$X$, then $g\perp \Span\Phi(X)$, which is dense in $\cG$, hence $g=0$.

Let $\cH:=\ran(U)$ and on it define the inner product $\langle\cdot,\cdot\rangle_\cH$ in the following way.
For any $h,g\in\cG$,
\begin{equation}\label{e:uhaug} \langle Uh,Ug\rangle_\cH=
\langle\langle h,\Phi(\cdot)\rangle_\cG,\langle g,\Phi(\cdot)\rangle_\cG\rangle_\cH:=\langle h,g\rangle_\cG.
\end{equation}
In this fashion, $\cH$ becomes a Hilbert space in $\cF_\FF(X)$ in such a way that $U\colon \cG\ra \cH$
is a unitary operator. We claim that $\cH$ is the reproducing kernel Hilbert space on $X$ with kernel $K$.

Indeed, $\cH$ is a Hilbert space in $\cF_\FF(X)$ and, for each $x\in X$, we have
\begin{equation*}
K_x(y):=K(y,x)=\langle \Phi(x),\Phi(y)\rangle_\cG,\quad y\in X,
\end{equation*}
hence $K_x=\langle \Phi(x),\Phi(\cdot)\rangle_\cG\in \ran(U)=\cH$, since $\Phi(x)\in\cG$.

Finally, for arbitrary $f\in\cH$ there exists uniquely $g\in\cG$ such that $f=\langle g,\Phi(\cdot)\rangle_\cG$. Then, 
for any $x\in X$ we have
\begin{equation*} \langle f,K_x\rangle_\cH = \langle \langle g,\Phi(\cdot)\rangle_\cG, \langle \Phi(x),\Phi(\cdot)\rangle_\cG\rangle_\cH=\langle g,\Phi(x)\rangle_\cG=f(x),
\end{equation*}
hence the reproducing property is proven and the claim as well.
\end{proof}

\begin{remark} The classical proof of Moore's Theorem goes differently. Given a Hermitian/symmetric 
and positive semidefinite kernel $K$ on $X$, in order to construct the reproducing kernel space $\cH_K$ we have the vector space $\Span\{K_x\mid x\in X\}$ on which we have to build it. The original construction was performed in three steps.\medskip

\textbf{Step 1.} On $\cS:=\Span\{K_x\mid x\in X\}$ define
\begin{equation}\label{e:lasum}
\langle \sum_{i=1}^m \alpha_iK_{x_i},\sum_{j=1}^n \beta_j K_{y_j}\rangle_K:= \sum_{i=1}^m\sum_{j=1}^n
\alpha_i\ol{\beta_j}K(y_j,x_i)
\end{equation}
and show that it is correctly defined.\medskip

\textbf{Step 2.} Show that $\langle\cdot,\cdot\rangle_K$ is an inner product.\medskip

\textbf{Step 3.} Show that the inner product space $(\cS,\langle\cdot,\cdot\rangle_K)$ can be completed to a 
Hilbert space within $\cF_\FF(X)$.\medskip

The fact that we use Kolmogorov's Theorem makes an important shortcut through all these.
\end{remark}

\begin{remark}\label{r:schwartz} If $X$ is a nonempty set, consider the following sets.
\begin{equation*}
\cK_\FF(X):=\{K\colon X\times X\ra \FF\mid K \mbox{ is a kernel}\},
\end{equation*}
\begin{equation*}
\cK_\FF^{\mathrm{H}}(X):=\{K\in \cK_\FF(X)\mid K=K^*\},
\end{equation*}
where $K^*(x,y)=\ol{K(y,x)}$ for all $x,y\in X$, and
\begin{equation*}
\cK_\FF^+(X):=\{K\in \cK_\FF^{\mathrm{H}}(X)\mid K\mbox{ is positive semidefinite}\}.
\end{equation*}
Note that: $\cK_\FF(X)$ is a vector space over $\FF$, $\cK_\FF^{\mathrm{H}}$ is a real vector space, while
$\cK_\FF^+(X)$ is a convex cone, closed with respect to the topology of pointwise convergence.

On $\cK_\FF^+(X)$ an order relation can be defined: for $K,L\in\cF_\FF^+(X)$, we say that $K\leq L$ if $L-K$ is 
positive semidefinite, which we express by $L-K\geq 0$. Then this order relation can be extended to the whole 
space $\cK_\FF^{\mathrm{H}}(X)$: 
given $K,L \in \cK_\FF^{\mathrm{H}}(X)$ we say that $K\leq L$ if $L-K\geq 0$.
\end{remark}

In the following we need two technical results from operator theory. These results are considered to be folklore
by the community of operator theorists but they have been recorded and generalised to the case of unbounded 
operators by R.G.~Douglas in \cite{Douglas}.

\begin{lemma}\label{l:douglas} Let $\cE$, $\cG$, and $\cH$ be Hilbert spaces over the same field $\FF$, and let 
$T\in\cB(\cE,\cH)$ and $S\in\cB(\cG,\cH)$. The following assertions are equivalent.
\begin{itemize}
\item[(i)] $\ran(T)\subseteq\ran(S)$. 
\item[(ii)] There exists $\alpha>0$ such that $TT^*\leq \alpha SS^*$.
\item[(iii)] There exists $F\in \cB(\cE,\cG)$ such that $T=SF$.
\end{itemize}
In addition, the operator $F$ as in (iii), if exists, can be always chosen such that 
$\ran(F)\subseteq \cG\ominus \ker(S)$ and, in this case, it is unique with these properties.
\end{lemma}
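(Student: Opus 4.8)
The plan is to establish the equivalence through the cycle (iii)$\Rightarrow$(i), (iii)$\Rightarrow$(ii), (ii)$\Rightarrow$(iii), and (i)$\Rightarrow$(iii). Since the two routes leading into (iii) will each produce a factor $F$ with $\ran(F)\subseteq\cG\ominus\ker(S)$, the additional assertion drops out once uniqueness is verified at the end.

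The two implications out of (iii) are immediate. If $T=SF$, then $\ran(T)=\ran(SF)\subseteq\ran(S)$, which is (i). For (ii), taking adjoints gives $TT^*=S(FF^*)S^*$, and from $FF^*\leq\|F\|^2 I$ we obtain, by conjugating with $S$ (which preserves the order on positive operators), $TT^*\leq\|F\|^2 SS^*$; thus (ii) holds with $\alpha=\|F\|^2$ when $F\neq 0$, and with any $\alpha>0$ otherwise.

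The core of the lemma is (ii)$\Rightarrow$(iii). Assuming $TT^*\leq\alpha SS^*$, the decisive computation is that for every $y\in\cH$,
\[
\|T^*y\|^2=\langle TT^*y,y\rangle\leq\alpha\langle SS^*y,y\rangle=\alpha\|S^*y\|^2.
\]
This single estimate does two things at once: it shows that the assignment $S^*y\mapsto T^*y$ is well defined on $\ran(S^*)$ (if $S^*y=S^*y'$ the inequality forces $T^*y=T^*y'$) and that it is bounded there with norm at most $\sqrt{\alpha}$. Extending by continuity to $\ol{\ran(S^*)}=\cG\ominus\ker(S)$ and declaring the extension to vanish on $\ker(S)$ yields $G\in\cB(\cG,\cE)$ with $GS^*=T^*$ and $\ker(G)\supseteq\ker(S)$. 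Passing to adjoints gives $SG^*=T$, so $F:=G^*$ satisfies $T=SF$; moreover $\ker(G)\supseteq\ker(S)$ yields $\ran(F)=\ran(G^*)\subseteq\ker(S)^\perp=\cG\ominus\ker(S)$, the required range condition.

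For (i)$\Rightarrow$(iii) I would invoke the closed graph theorem. Given $\ran(T)\subseteq\ran(S)$, for each $e\in\cE$ there is a unique $Fe\in\cG\ominus\ker(S)$ with $S(Fe)=Te$ (existence from the inclusion, uniqueness because $\ker(S)$ meets its orthogonal complement only in $0$), and the resulting map $F$ is clearly linear. To see it is bounded, suppose $e_n\to e$ and $Fe_n\to h$; then $Te_n=S(Fe_n)\to Sh$ while also $Te_n\to Te$, so $Sh=Te$, and since $\cG\ominus\ker(S)$ is closed we get $h\in\cG\ominus\ker(S)$, whence $Fe=h$ by uniqueness. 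Thus the graph of $F$ is closed and $F\in\cB(\cE,\cG)$. Finally, for uniqueness of the factor: if $T=SF_1=SF_2$ with both ranges in $\cG\ominus\ker(S)$, then $\ran(F_1-F_2)\subseteq\ker(S)\cap(\cG\ominus\ker(S))=\{0\}$, so $F_1=F_2$. The only genuinely delicate point is the closed graph step in (i)$\Rightarrow$(iii); everything else is either a direct computation or the continuous extension of a densely defined contraction.
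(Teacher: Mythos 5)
Your proposal is correct and follows essentially the same route as the paper: the same four implications are proved, with the Closed Graph Theorem handling (i)$\Rightarrow$(iii) and the estimate $\|T^*y\|^2\leq\alpha\|S^*y\|^2$ driving (ii)$\Rightarrow$(iii) via the densely defined contraction $S^*y\mapsto T^*y$ and an adjoint. Your explicit verification of the range condition $\ran(F)\subseteq\cG\ominus\ker(S)$ in the (ii)$\Rightarrow$(iii) branch and of the uniqueness of $F$ is slightly more detailed than the paper's, but the substance is identical.
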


\begin{proof} (i)$\Ra$((iii). Assuming that $\ran(T)\subseteq \ran(S)$, 
let $S^{-1}\colon \ran(S)\ra \cG\ominus \ker(S)$ denote 
the inverse on the range of the operator $S\colon \cG\ominus\ker(S)\ra \ran(S)$. Since $S$ is bounded, the 
operator $S^{-1}$ is closed. Define $F\colon \cE\ra \cG\ominus\ker(S)$ by $F=S^{-1}T$. 
Since $\ran(T)\subseteq \ran(S)$, $F$ is correctly defined and, since $T$ is bounded and $S^{-1}$ is closed, it 
is a closed operator. By the Closed Graph Theorem, it follows that $F$ is bounded. We lift $F\colon \cE\ra \cG$ 
by composing it with the isometric inclusion $\cG\ominus\ker(S)\hookrightarrow \cG$ and note that $T=SF$.
Also, note that $\ran(F)\subseteq \cG\ominus\ker(S)$ and that $F$ is unique with these properties.

(iii)$\Ra$(i). Clear.

(iii)$\Ra$(ii). If $T=SF$ for some $F\in\cB(\cE,\cG)$ then
\begin{equation*}
TT^*=SFF^*S^*\leq \|F\|^2 SS^*,
\end{equation*}
and let $\alpha\geq \|F\|^2$ be a nontrivial number.

(ii)$\Ra$(iii). Assume that $TT^*\leq \alpha SS^*$. Then, for any $h\in \cH$ we have
\begin{equation*}
\|T^*h\|^2_\cE=\langle TT^*h,h\rangle_\cH\leq \alpha \langle SS^*h,h\rangle_\cH=\alpha \|S^*h\|^2_\cG,
\end{equation*}
which shows that letting $E\colon \ran(S^*)\ra \ran(T^*)$ be defined by $S^*h\mapsto T^*h$, for $h\in \cH$ 
arbitrary, this is a correctly defined linear operator and $\|E\|\leq\sqrt{\alpha}$. Then, extend $E$ to the closure of
$\ran(S^*)$ and then trivially to the whole space $\cG$ and observe that $ES^*=T^*$. Letting $F=E^*$ we 
have $T=SF$. 
\end{proof}

Recall that, a bounded linear operator $T\in \cB(\cE,\cH)$, for some Hilbert spaces $\cE$ and $\cH$ 
over the same field $\FF$, is a \emph{partial isometry} if $T^*T$ and $TT^*$ are orthogonal projections, 
equivalently, $T^*TT^*=T^*$ and $TT^*T=T$. In this case, the space $\ran(T^*T)=\cE\ominus\ker(T)$ is called
the \emph{right support} of $T$ and the space $\ran(TT^*)=\cH\ominus \ker(T^*)$ is called the 
\emph{left support} of $T$.

\begin{lemma}\label{l:pai} Let $\cE$, $\cG$, and $\cH$ be Hilbert spaces over the same field $\FF$, and let 
$T\in\cB(\cE,\cH)$ and $S\in\cB(\cG,\cH)$ be such that $TT^*=SS^*$. Then $\ran(T)=\ran(S)$ and 
$T=SV$ for $V\in\cB(\cE,\cG)$ a 
unique partial isometry, with  the right support of $V$ equal to the right support of $T$ and the left support of $V$ 
equal to the right support of $S$. 
\end{lemma}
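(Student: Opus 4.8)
The plan is to first settle the range equality and then build the partial isometry $V$ by hand out of the isometry that the hypothesis $TT^*=SS^*$ forces on the adjoints. Since $TT^*=SS^*$ gives in particular $TT^*\leq SS^*$ and $SS^*\leq TT^*$, applying Lemma~\ref{l:douglas} (the equivalence (i)$\LRa$(ii)) in both directions yields $\ran(T)\subseteq\ran(S)$ and $\ran(S)\subseteq\ran(T)$, hence $\ran(T)=\ran(S)$.

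For the partial isometry, the key observation is that for every $h\in\cH$ we have $\|T^*h\|_\cE^2=\langle TT^*h,h\rangle_\cH=\langle SS^*h,h\rangle_\cH=\|S^*h\|_\cG^2$. Consequently the assignment $S^*h\mapsto T^*h$ is well defined and isometric, giving a linear isometry $W_0\colon\ran(S^*)\ra\ran(T^*)$. I would extend $W_0$ by continuity to an isometry $W$ from $\ol{\ran(S^*)}=\cG\ominus\ker(S)$ into $\cE\ominus\ker(T)$; since an isometry on a Hilbert space has closed range and that range contains the dense subspace $\ran(T^*)$, the map $W$ is in fact a unitary from $\cG\ominus\ker(S)$ onto $\cE\ominus\ker(T)$. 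Extending $W$ by $0$ on $\ker(S)$ produces a partial isometry $\hat W\in\cB(\cG,\cE)$ whose right support is $\cG\ominus\ker(S)$ and whose left support is $\cE\ominus\ker(T)$.

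Now set $V:=\hat W^*\in\cB(\cE,\cG)$. By construction $\hat W S^*=T^*$, and taking adjoints gives $S\hat W^*=T$, that is, $SV=T$. Being the adjoint of a partial isometry, $V$ is again a partial isometry, and its supports are interchanged with those of $\hat W$: its right support is $\cE\ominus\ker(T)=\ran(T^*T)$ (the right support of $T$) and its left support is $\cG\ominus\ker(S)=\ran(S^*S)$ (the right support of $S$), as claimed. For uniqueness I would invoke the addendum in Lemma~\ref{l:douglas}: since $\ran(V)=\cG\ominus\ker(S)$, the operator $V$ is precisely the unique $F$ with $T=SF$ and $\ran(F)\subseteq\cG\ominus\ker(S)$; any partial isometry $V'$ with $T=SV'$ and left support $\cG\ominus\ker(S)$ satisfies $\ran(V')\subseteq\cG\ominus\ker(S)$ and therefore must coincide with that unique $F$.

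The main obstacle I anticipate is bookkeeping with directions and adjoints rather than any deep difficulty: the natural isometry lives on the adjoints $S^*h\mapsto T^*h$ and yields a partial isometry $\hat W\colon\cG\ra\cE$, so one must pass to $V=\hat W^*$ to obtain the factorisation $SV=T$ in $\cB(\cE,\cG)$. The two points requiring genuine care are verifying that $W$ is onto \emph{all} of $\cE\ominus\ker(T)$ (not merely a dense subspace), which uses completeness together with $\ol{\ran(T^*)}=\cE\ominus\ker(T)$, and correctly matching the right/left supports of $V$ with those of $\hat W$ under the adjoint.
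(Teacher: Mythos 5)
Your proposal is correct and follows essentially the same route as the paper: both prove $\ran(T)=\ran(S)$ from Lemma~\ref{l:douglas}, build the isometry $S^*h\mapsto T^*h$ on $\ran(S^*)$, extend it to a partial isometry $\cG\ra\cE$, and take its adjoint to get $V$ with $T=SV$. You supply slightly more detail than the paper on the surjectivity of the extension onto $\cE\ominus\ker(T)$ and on uniqueness via the addendum to Lemma~\ref{l:douglas}, and your identification of the supports of $V$ (right support $\cE\ominus\ker(T)$, left support $\cG\ominus\ker(S)$) is the correct one, matching the paper's proof rather than the slightly garbled left/right labels in the statement itself.
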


\begin{proof} If $TT^*=SS^*$ then $\ran(T)=\ran(S)$ from Lemma~\ref{l:douglas}. Moreover, for any $h\in\cH$ we have
\begin{equation*} \|T^*h\|^2_\cE=\langle TT^*h,h\rangle_\cH=\langle SS^*h,h\rangle_\cH=\|S^*h\|_\cG,
\end{equation*}
hence, the linear 
operator $U\colon \ran(S^*)\ra \ran(T^*)$ is correctly defined by $S^*h\mapsto T^*h$, for $h\in\cH$,
and isometric. Then, we can uniquely extend $U$ to a partial isometry $U\colon \cG\ra\cE$
such that the left support of $U$ equals the right support of $T$ and the right support of $U$ is the right 
support of $S$. Letting $V=U^*$, we have $T=SV$.
\end{proof}

\begin{remark}\label{r:opran} 
The previous two lemmas are related to the theory of operator ranges in a Hilbert space. Basically, 
this theory says that the 
subspaces of a given Hilbert space $\cH$ that are ranges of operators (bounded or closed, is the 
same) are exactly those Hilbert spaces that are continuously included in $\cH$ and
share many properties with closed subspaces but, in the same time, they have new properties. 
Briefly, let $\cH$ be a Hilbert space and $\cG$ 
another Hilbert space such that $\cG\subseteq \cH$, with all algebraic operations, and let us assume that the 
inclusion operator $J\colon \cG\hookrightarrow\cH$ is continuous. Let $A:=JJ^*\in\cB(\cH)$ and note that 
$A\geq 0$, hence its square root $A^{1/2}\in\cB(\cH)$ exists. Then, from Lemma~\ref{l:douglas} it follows that
$\cG=\ran(J)=\ran(A^{1/2})$ and that $A^{1/2}=JV$ with $V\colon \cH\ominus\ker(A)\ra \cG$ a unitary operator. 
Since $J$ acts like the identity operator it follows that, when viewing $A^{1/2}\colon \cH\ominus\ker(A)\ra \cG$, it 
is a unitary operator. In particular, 
\begin{equation*}
\langle A^{1/2}h,A^{1/2}g\rangle_\cG=\langle h,g\rangle_\cH,\quad h,g\in\cH\ominus\ker(A).
\end{equation*}

Conversely, let $T\in\cB(\cK,\cH)$ and consider $\cG:=\ran(T)$ endowed with the inner product
\begin{equation*}
\langle Th,Tg\rangle_{\cG}:=\langle h,g\rangle_\cK,\quad h,g\in\cK\ominus\ker(T).
\end{equation*}
Then, for arbitrary $g\in\cG$ there exists unique $k\in \cK\ominus\ker(T)$ such that $g=Tk$ and then
\begin{equation*}
\|g\|_\cH=\|Tk\|_\cH\leq \|T\| \|k\|_\cK=\|T\| \|Tk\|_\cG=\|T\| \|g\|_\cG.
\end{equation*}
This proves that the canonical embedding $J\colon \cG\hookrightarrow \cH$ is continuous.

In Theorem~\ref{t:inclusion} we see that the concept of a reproducing kernel Hilbert space is actually
originating from the concept of operator range, sharing a property called automatic continuity: once an inclusion 
of operator ranges, or reproducing kernel Hilbert spaces, holds, the inclusion is continuous. 
A survey of this theory can be found at P.A.~Fillmore and 
J.P.~Williams \cite{FillmoreWilliams}. This theory is closely related to the theory of reproducing kernel 
Hilbert space, and this is the best viewed through the approach of L.~Schwartz \cite{Schwartz}. We will better 
see this in Theorem~\ref{t:leka} when it will be proven that the reproducing kernel Hilbert space associated to 
a Mercer kernel is an operator range.
\end{remark}

\begin{theorem}[Aronszajn's Inclusion Theorem \cite{Aronszajn1}, \cite{Aronszajn2}]
\label{t:inclusion} Given $K,L\in\cK_\FF^+(X)$, the following assertions are equivalent.
\begin{itemize}
\item[(i)] $\cH_K\subseteq \cH_L$.
\item[(ii)] $\cH_K\hookrightarrow \cH_L$, that is, $\cH_K$ is continuously included in $\cH_L$.
\item[(iii)] There exists $c>0$ such that $K\leq c L$.
\end{itemize}
\end{theorem}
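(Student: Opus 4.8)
The plan is to run the cycle by first settling the easy equivalence of (i) and (ii) with the Closed Graph Theorem, and then converting the geometric statement (ii) into the order-theoretic statement (iii) by inspecting the adjoint of the inclusion map. The bridge I will use throughout is the computation, valid in any reproducing kernel Hilbert space by Proposition~\ref{p:rkhs}(a), that $\bigl\|\sum_j \alpha_j K_{x_j}\bigr\|_{\cH_K}^2=\sum_{i,j}\overline{\alpha_i}\alpha_j K(x_i,x_j)$, and the analogous identity for $L$. With these, the condition $K\le cL$, i.e.\ $cL-K\ge 0$, is by the very definition of positive semidefiniteness nothing but the inequality
\begin{equation*}
\Bigl\| \sum_j \alpha_j K_{x_j}\Bigr\|_{\cH_K}^2 \le c \Bigl\| \sum_j \alpha_j L_{x_j}\Bigr\|_{\cH_L}^2 \tag{$\star$}
\end{equation*}
for all finite families $x_j\in X$, $\alpha_j\in\FF$. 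This is the key dictionary between the cone order on kernels and the Hilbert space norms.

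For (i)$\Ra$(ii): since $\cH_K$ and $\cH_L$ are both reproducing kernel Hilbert spaces, norm convergence in either implies pointwise convergence by Proposition~\ref{p:conte}. Hence if $f_n\to f$ in $\cH_K$ while the same sequence converges to $g$ in $\cH_L$, the two limits agree at every point of $X$, so $f=g$; the inclusion $J\colon\cH_K\to\cH_L$ thus has closed graph and is bounded by the Closed Graph Theorem. The implication (ii)$\Ra$(i) is immediate.

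For (ii)$\LRa$(iii) the crux is to identify the adjoint on kernel sections. If $J\colon\cH_K\hra\cH_L$ is the bounded inclusion with $c=\|J\|^2$, then for every $f\in\cH_K$ and $x\in X$ the reproducing property gives $\langle f,J^*L_x\rangle_{\cH_K}=\langle Jf,L_x\rangle_{\cH_L}=f(x)=\langle f,K_x\rangle_{\cH_K}$, so $J^*L_x=K_x$; applying $J^*$ to $\sum_j\alpha_j L_{x_j}$ and using $\|J^*\|=\|J\|$ yields exactly $(\star)$, hence $K\le cL$. Conversely, starting from $(\star)$, define $T_0$ on $\Span\{L_x\mid x\in X\}$ by $\sum_j\alpha_j L_{x_j}\mapsto\sum_j\alpha_j K_{x_j}$; the inequality $(\star)$ shows at once that $T_0$ is well defined (it respects every relation $\sum_j\alpha_j L_{x_j}=0$) and bounded with $\|T_0\|\le\sqrt c$, so, $\Span\{L_x\}$ being dense by Proposition~\ref{p:rkhs}(c), it extends to $T\in\cB(\cH_L,\cH_K)$ with $TL_x=K_x$. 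Then $(T^*f)(x)=\langle T^*f,L_x\rangle_{\cH_L}=\langle f,K_x\rangle_{\cH_K}=f(x)$, so $T^*f$ equals $f$ as a function on $X$; therefore $f\in\cH_L$ for every $f\in\cH_K$, and the continuous inclusion $\cH_K\hra\cH_L$ is precisely $T^*$. This last step is the factorization of Lemma~\ref{l:douglas} in disguise, with $T$ the Douglas factor and $T^*$ the embedding.

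I expect the only delicate points to be the pointwise-convergence input feeding the Closed Graph Theorem in (i)$\Ra$(ii) and the well-definedness of $T_0$ in (iii)$\Ra$(ii); the first is handed to us by Proposition~\ref{p:conte} and the second is exactly the content of $(\star)$. The genuine conceptual heart, which makes the whole equivalence transparent, is the single identity $J^*L_x=K_x$ (equivalently $TL_x=K_x$): it is the one computation that translates the analytic inclusion of function spaces into the algebraic order relation between the kernels.
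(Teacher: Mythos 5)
Your proof is correct. The cycle (i)$\Ra$(ii) via the Closed Graph Theorem and (ii)$\Ra$(iii) coincide in substance with the paper's argument --- your identification $J^*L_x=K_x$ is just a cleaner packaging of the paper's computation $\bigl\|\sum_j\alpha_jK_{x_j}\bigr\|_{\cH_K}^2=\bigl\langle\sum_j\alpha_jK_{x_j},\sum_i\alpha_iL_{x_i}\bigr\rangle_{\cH_L}$ followed by Cauchy--Schwarz, and it has the advantage of replacing the paper's ``a standard argument shows'' by the explicit bound $\|J^*\|=\|J\|$. Where you genuinely diverge is in (iii)$\Ra$(i). The paper takes the same operator $V\colon\cH_L\ra\cH_K$, $\sum_j\alpha_jL_{x_j}\mapsto\sum_j\alpha_jK_{x_j}$, but then forms the operator range $\cR:=\ran(|V|^{1/2})$ with its transported inner product, computes that $\cR$ has reproducing kernel $K$, and invokes the uniqueness of the reproducing kernel Hilbert space (Proposition~\ref{p:unic}) to conclude $\cH_K=\cR\subseteq\cH_L$; this fits the operator-range theme the paper develops in Remark~\ref{r:opran} and reuses in Theorem~\ref{t:leka}. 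You instead pass to the adjoint $T^*\colon\cH_K\ra\cH_L$ and verify pointwise, via the two reproducing properties, that $T^*f=f$ as a function on $X$, so that the inclusion \emph{is} the bounded operator $T^*$. Your route is shorter and more elementary --- it needs neither $|V|^{1/2}$ nor the uniqueness theorem --- and it delivers (ii) directly with the explicit norm bound $\|T^*\|\le\sqrt{c}$ rather than only the set inclusion (i); what it forgoes is the structural information that $\cH_K$ sits inside $\cH_L$ as the operator range of $|V|^{1/2}$, which the paper exploits later. Both arguments are, as you note, instances of the Douglas factorization of Lemma~\ref{l:douglas}.
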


\begin{proof} (i)$\Ra$(ii). If $\cH_K\subseteq \cH_L$ let $J_{L,K}\colon\cH_K\ra\cH_L$ be the inclusion operator: 
$J_{L,K}h=h$ for all $h\in\cH_K$. We show that this operator is closed. Indeed, if $(h_n)_n$ is a sequence in 
$\cH_K$ converging to $h$ with respect to the norm $\|\cdot\|_{\cH_K}$  and to $g$ with respect to the norm 
$\|\cdot\|_{\cH_L}$ then it converges pointwise to $h$ and $g$ hence $h=g$. By the Closed Graph Theorem it 
follows that $J_{L,K}$ is bounded.

(ii)$\Ra$(iii). 
If $\cH_K$ is continuously included in $\cH_L$ let $J_{L,K}\colon \cH_K\hookrightarrow \cH_L$ and let 
$\|J_{L,K}\|<\infty$ denote its operator norm. Let $x_1,\ldots,x_n\in X$ and $\alpha_1,\ldots,\alpha_n\in\FF$ be 
arbitrary. Then,
\begin{align*}
\bigl\| \sum_{j=1}^n \alpha_j K_{x_j}\|^2_{\cH_K} & = \sum_{i,j=1}^n \ol{\alpha_i}\alpha_j K_{x_j}(x_i) \\
\intertext{which, using the fact that $K_{x_j}\in \cH_L$ and the reproducing property in $\cH_L$, equals}
& = \sum_{i,j=1}^n \ol{\alpha_i}\alpha_j \langle K_{x_j},L_{x_i}\rangle_{\cH_L} \\
& = \bigl\langle \sum_{j=1}^n \alpha_j K_{x_j},\sum_{i=1}^n \alpha_i L_{x_i}\bigr\rangle_{\cH_L} \\
& \leq \bigl\| \sum_{j=1}^n \alpha_j K_{x_j}\bigr\|_{\cH_L} \bigl\| \sum_{i=1}^n \alpha_iL_{x_i}\bigr\|_{\cH_L} \\
\intertext{which, taking into account that $J_{L,K}$ is a bounded operator, is dominated by}
& \leq \|J_{L,K}\| \bigl\| \sum_{j=1}^n \alpha_j K_{x_j}\bigr\|_{\cH_K} \bigl\| \sum_{i=1}^n \alpha_iL_{x_i}\bigr\|_{\cH_L}.
\end{align*}
From here, a standard argument shows that
\begin{equation*}
\sum_{i,j=1}^n \ol{\alpha_i}\alpha_j K(x_i,x_j) \leq \|J_{L,K}\|^2 \sum_{i,j=1}^n \ol{\alpha_i}\alpha_j L(x_i,x_j). 
\end{equation*}
We have proven that $K\leq \|J_{L,K}\|^2 L$.

(iii)$\Ra$(i). This basically follows by the constructions in Theorem~\ref{t:kolmo}, Theorem~\ref{t:moore}, and the 
uniqueness of the reproducing kernel Hilbert space induced by a kernel, by Proposition~\ref{p:unic}. 
With notation as in \eqref{e:lasum}, 
letting $V\colon \Span\{L_x\mid x\in X\}\ra \Span\{K_x\mid x\in X\}$ be defined by
\begin{equation}\label{e:vebi} V\bigl(\sum_{j=1}^n \alpha_j L_{x_j}\bigr) := \sum_{j=1}^n \alpha_j K_{x_j},
\end{equation}
for arbitrary $n\in\NN$, $x_1,\ldots,x_n\in X$, and $\alpha_1,\ldots, \alpha_j\in\FF$, we have
\begin{align*}
\bigl\|V\bigl(\sum_{j=1}^n \alpha_j L_{x_j}\bigr)\bigr\|_K^2 & = \bigl\langle V\bigl(\sum_{j=1}^n \alpha_j L_{x_j} 
\bigr),V\bigr(\sum_{i=1}^n \alpha_i L_{x_i}\bigr)\bigr\rangle_K  
= \langle \sum_{j=1}^n \alpha_j K_{x_j},\sum_{i=1}^n \alpha_i K_{x_i}\rangle_K \\
 & = \sum_{i,j=1}^n \ol{\alpha_i}\alpha_j K(x_i,x_j)  \leq c  \sum_{i,j=1}^n \ol{\alpha_i}\alpha_j L(x_i,x_j)\\
 & = c  \langle \sum_{j=1}^n \alpha_j L_{x_j},\sum_{i=1}^n \alpha_i L_{x_i}\rangle_L 
 = c \bigl\| \sum_{j=1}^n \alpha_j L_{x_j}\bigr\|^2_L, 
\end{align*}
which shows that $V$ is correctly defined and it extends uniquely to a bounded linear operator 
$V\colon\cH_L\ra\cH_K$. As usual, let $|V|=(V^*V)^{1/2}\in\cB^+(\cH_L)$.

Since, for any $x,y\in X$ we have
\begin{align*}
K(y,x)=\langle K_x,K_y\rangle_{\cH_K} & = \langle V L_x,VL_y\rangle_{\cH_K} 
= \langle V^*V L_x,L_y\rangle_{\cH_L} =
\langle |V| L_x,|V|L_y\rangle_{\cH_L},
\end{align*}
it follows that $(\cH_l\ominus\ker(V);\{|V|L_x\}_{x\in X})$ is a minimal linearsiation of $K$ and then, 
by the proof of
Theorem~\ref{t:moore}, see \eqref{e:ug} and \eqref{e:uhaug}, the reproducing kernel Hilbert space $\cH_K$
consists in all functions $X\ni x\mapsto \langle h,|V|L_x\rangle_{\cH_L}\in \CC$, for a unique 
$h\in \cH_L\ominus\ker(V)$. But, 
\begin{equation}\label{e:lahiv}
\langle h,|V|L_x\rangle_{\cH_L}=\langle |V|h,L_x\rangle_{\cH_L},\quad h\in \cH_L\ominus\ker(V),\ x\in X,
\end{equation}
and, again by Theorem~\ref{t:moore} and Proposition~\ref{p:unic}, it follows that all these functions belong to 
$\cH_L$ as well, hence $\cH_K\subseteq \cH_L$. 
\end{proof}

We point out a statement that we obtained during the proof of the implication (iii)$\Ra$(i), that we have just 
proven. This strengthens one more time the statement that reproducing kernel Hilbert spaces are essentially 
operator ranges. 

\begin{corollary}\label{c:rec} Given $K,L\in\cK_\FF^+(X)$, assume that
there exists $c>0$ such that $K\leq c L$. Let $\cR:=\ran(|V|)\subseteq \cH_L$, where $V\in\cB(\cH_L,\cH_K)$ is 
defined at \eqref{e:vebi} and $|V|=(V^*V)^{1/2}$, 
be the Hilbert space with inner product
\begin{equation*}
\langle |V|f,|V|g\rangle_\cR:=\langle f,g\rangle_{\cH_L},
\end{equation*}
where $f,g\in \cH_L\ominus \ker(V)$ are unique. Then
$\cH_K=\cR$ as reproducing kernel Hilbert spaces.
\end{corollary}

\begin{proof} As in the proof of (iii)$\Ra$(i) before, $V$ is correctly defined and it extends uniquely to a bounded 
linear operator $V\colon\cH_L\ra\cH_K$.
We first observe that, when viewed 
$|V|\colon \cH_L\ominus \ker(V)\ra \cR$, it is a unitary operator. In view of Remark~\ref{r:opran}, $\cR$ is 
continuously included in $\cH_L$, in particular its topology is stronger than the topology of pointwise 
convergence hence, in view of Proposition~\ref{p:conte}, it is a reproducing kernel Hilbert space on $X$. 
By the proof of Theorem~\ref{t:moore} and 
Proposition~\ref{p:unic}, from \eqref{e:lahiv} it follows that
$\cH_K=\cR$ as reproducing kernel Hilbert spaces.
\end{proof}

The previous theorem can be used in order to provide an answer to the following question: 
\emph{given a kernel $K\in\cK_\FF^+(X)$ and a nontrivial 
function $\phi\in \cF_\FF(X)$, how can we know that it belongs to the space $\cH_K$?} In order to provide an 
answer to this question, we need some notation. For each function $\phi\in\cF_\FF(X)$ consider the kernel
\begin{equation*}
K_\phi(x,y):=\phi(x)\ol{\phi(y)},\quad x,y\in X,
\end{equation*}
and observe that $K_\phi\in\cK_\FF^+(X)$. Also,
\begin{equation*}
(C_{K_\phi}f)(x)=\sum_{y\in X} \phi(x)\ol{\phi(y)}f(y)=\phi(x)\sum_{y\in X} \ol{\phi(y)}f(y) ,\quad f\in\cF_\FF^0(X),
\end{equation*}
hence 
$\ran(C_{K_\phi})$ has dimension $1$ and is generated by $\phi$. In particular, $\cH_{K_\phi}=\FF\phi$.

\begin{corollary}\label{c:in} 
Given $K\in\cK_\FF^+(X)$ and $\phi\in\cF_\FF(X)$, the following assertions are equivalent.
\begin{itemize}
\item[(i)] $\phi\in\cH_K$.
\item[(ii)] $K_\phi\leq c K$ for some $c>0$.
\item[(iii)] $\cH_{K_\phi}\subseteq \cH_K$.
\end{itemize}
\end{corollary}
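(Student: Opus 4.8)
The plan is to read Corollary~\ref{c:in} off Aronszajn's Inclusion Theorem (Theorem~\ref{t:inclusion}) applied to the pair of kernels $(K_\phi,K)$, together with the identification $\cH_{K_\phi}=\FF\phi$ computed in the paragraph immediately preceding the statement. Both kernels lie in $\cK_\FF^+(X)$ (for $K_\phi$ this was noted right after its definition), so Theorem~\ref{t:inclusion} is applicable with the substitution $K\mapsto K_\phi$ and $L\mapsto K$.

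First I would dispose of the trivial case $\phi=0$, where $K_\phi=0$, $\cH_{K_\phi}=\{0\}$, and all three assertions hold vacuously; so I may assume $\phi\neq 0$. In that case $\cH_{K_\phi}=\FF\phi$ is the one-dimensional space spanned by $\phi$, and the set inclusion $\cH_{K_\phi}\subseteq\cH_K$ means exactly $\FF\phi\subseteq\cH_K$, i.e.\ $\phi\in\cH_K$. This gives (i)$\Leftrightarrow$(iii) with no further work. For the equivalence (ii)$\Leftrightarrow$(iii) I would simply invoke Theorem~\ref{t:inclusion} for the pair $(K_\phi,K)$: it states precisely that $\cH_{K_\phi}\subseteq\cH_K$ holds if and only if there exists $c>0$ with $K_\phi\leq cK$, which is assertion (ii). Chaining the two equivalences closes the loop.

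I do not expect any genuine obstacle here: the entire content has been front-loaded into the preliminary computation $\cH_{K_\phi}=\FF\phi$ and into Theorem~\ref{t:inclusion}. The only point deserving a line of comment is that the purely set-theoretic inclusion $\cH_{K_\phi}\subseteq\cH_K$ is the faithful translation of the membership $\phi\in\cH_K$; in particular one does not need to check any norm estimate by hand, since by Theorem~\ref{t:inclusion} every such inclusion of reproducing kernel Hilbert spaces is automatically continuous, and the norm comparison is absorbed into the constant $c$ of assertion (ii).
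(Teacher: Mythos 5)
Your proposal is correct and is exactly the argument the paper intends: the corollary is read off from Theorem~\ref{t:inclusion} applied to the pair $(K_\phi,K)$, combined with the identification $\cH_{K_\phi}=\FF\phi$ established in the paragraph preceding the statement. Your explicit handling of the trivial case $\phi=0$ and the remark that $\FF\phi\subseteq\cH_K$ is the same as $\phi\in\cH_K$ are the only details the paper leaves unsaid, and they are handled correctly.
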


\begin{example} Let $n\in\NN$ be arbitrary and denote $\NN_n:=\{1,2,\ldots,n\}$. Consider a kernel 
$K\in\cK_\FF(\NN_n)$, that is, an $n\times n$ matrix $K$ with entries $K(i,j)$, $i,j=1,\ldots,n$. Let 
$K_j:=K(\cdot,j)$, that is, $K_j$ is the $j$-th column of the matrix $K$. Then, letting $\FF^n_2$ 
denote the vector space $\FF^n$ organised as a Hilbert space in the usual manner,
\begin{equation*}
\cF_\FF(\NN_n)=\cF_\FF^0(\NN_n)=\FF_2^n,
\end{equation*}
the operator $C_K\colon \cF_\FF(\NN_n)\ra\cF_\FF(\NN_n)$, defined by
\begin{equation*}
(C_K f)(i):= \sum_{j=1}^n K(i,j)f(j)=\sum_{j=1}^n K_j(i)f(j),\quad i=1,\ldots,n,
\end{equation*}
$C_K$ is nothing else than the operator of multiplication with matrix $K$ acting $\FF^n_2\ra \FF^n_2$.
Then,
$\ran(C_K)=\Span\{K_j\mid j=1,\ldots,n\}$ is the range space of the matrix $K$, and on it we have the inner 
product $\langle \cdot,\cdot\rangle_K$ defined by
\begin{equation*}
\langle \sum_{j=1}^n \alpha_jK_j,\sum_{l=1}^n\beta_lK_l\rangle_K:=\sum_{j,l=1}^n K(l,j)\alpha_j\ol{\beta_l},
\end{equation*}
hence $\cH_K=\ran(C_K)=\ran(K)$ with inner product $\langle\cdot,\cdot\rangle_K$ and 
$\dim(\cH_K)=\rank(K)$.

Let $J_K\colon \cH_K\hookrightarrow \FF^n_2$. Then,
\begin{equation*}
\langle J_Kf,g\rangle_{\FF^n_2} =\langle f,C_Kg\rangle_K,\quad f\in\cH_K,g\in \FF^n_2,
\end{equation*}
which means that $C_K=J_K^*\colon \FF^n_2\ra \cH_K$. But, considering $C_K\colon\FF^n_2\ra\FF_2^n$, we actually have $C_K=J_KJ_K^*$.
In fact, we have that the operator
\begin{equation*}
C_K^{1/2}\colon \FF^n_2\ominus \ker(C_K)\ra \cH_K
\end{equation*}
 is an isometric isomorphism of Hilbert spaces. These facts are consequences of the considerations from 
 Remark~\ref{r:opran}.  In Theorem~\ref{t:leka}
 more general results will be obtained.
\end{example}

\section{Weyl's and the Gaussian Kernels}\label{s:wgk}

In this section we present two of the most useful kernels in machine learning, Weyl's 
kernel and the Gaussian 
kernel. Details and much more properties 
of the latter can be found at H.Q.~Minh \cite{Minh}, which we followed in writing this section.

\subsection{Weyl's Kernel}\label{ss:wk}

We start with some notation. For $n\in\NN$ and a multiindex 
$\alpha=(\alpha_1,\alpha_2,\ldots,\alpha_n)\in\NN_0^n$,
$n$ is the \emph{dimension} of $\alpha$ and
$|\alpha|:=\alpha_1+\alpha_2+\cdots+ \alpha_n$ is the \emph{length} of $\alpha$. Also,
$\alpha!:= \alpha_1!\cdot \alpha_2!\cdots\alpha_n!$ is the \emph{factorial} of $\alpha$.
If $x\in\RR^n$ then $x^\alpha:=x_1^{\alpha_1}\cdot x_2^{\alpha2}\cdots \alpha_n^{\alpha_n}$.
If $x,y\in\RR^n$ then $\langle x,y\rangle=\sum_{j=1}^n x_j y_j$ is the Euclidean inner product and 
$\|x\|=\sqrt{\langle x,x\rangle}$ is the Euclidean norm.

For $d\in\NN_0$ we denote by
\begin{equation*}C^\alpha_d={d \choose \alpha}={d \choose \alpha_1,\ldots,\alpha_n}:=\frac{d!}{\alpha!}=
\frac{d!}{\alpha_1!\cdots \alpha_n!},
\end{equation*}
the \emph{multinomial coefficients}
that show up in the Multinomial Theorem,
\begin{equation*}
(x_1+x_2+\cdots+x_n)^d =\sum_{\alpha\in\NN_0^n(d)} C^\alpha_d x^\alpha,
\end{equation*}
where
\begin{equation*}
\NN_0^n(d):=\{\alpha\in\NN_0^n \mid |\alpha|=d\}.
\end{equation*}

For $d\in\NN$ Weyl's Kernel $W^{(d)}\colon \RR^n \times \RR^n\ra \RR$ is defined as
\begin{equation}\label{e:wedexy}
W^{(d)}(x,y):=\langle x,y\rangle^d=\bigl(\sum_{j=1}^n x_j y_j\bigr)^d
=\sum_{\alpha\in\NN_0^n(d)} C^\alpha_d x^\alpha y^\alpha,\quad x,y\in\RR^n.
\end{equation} 

\begin{figure}[h]
\includegraphics[width=14cm, trim = 0cm 8cm 0cm 8cm]{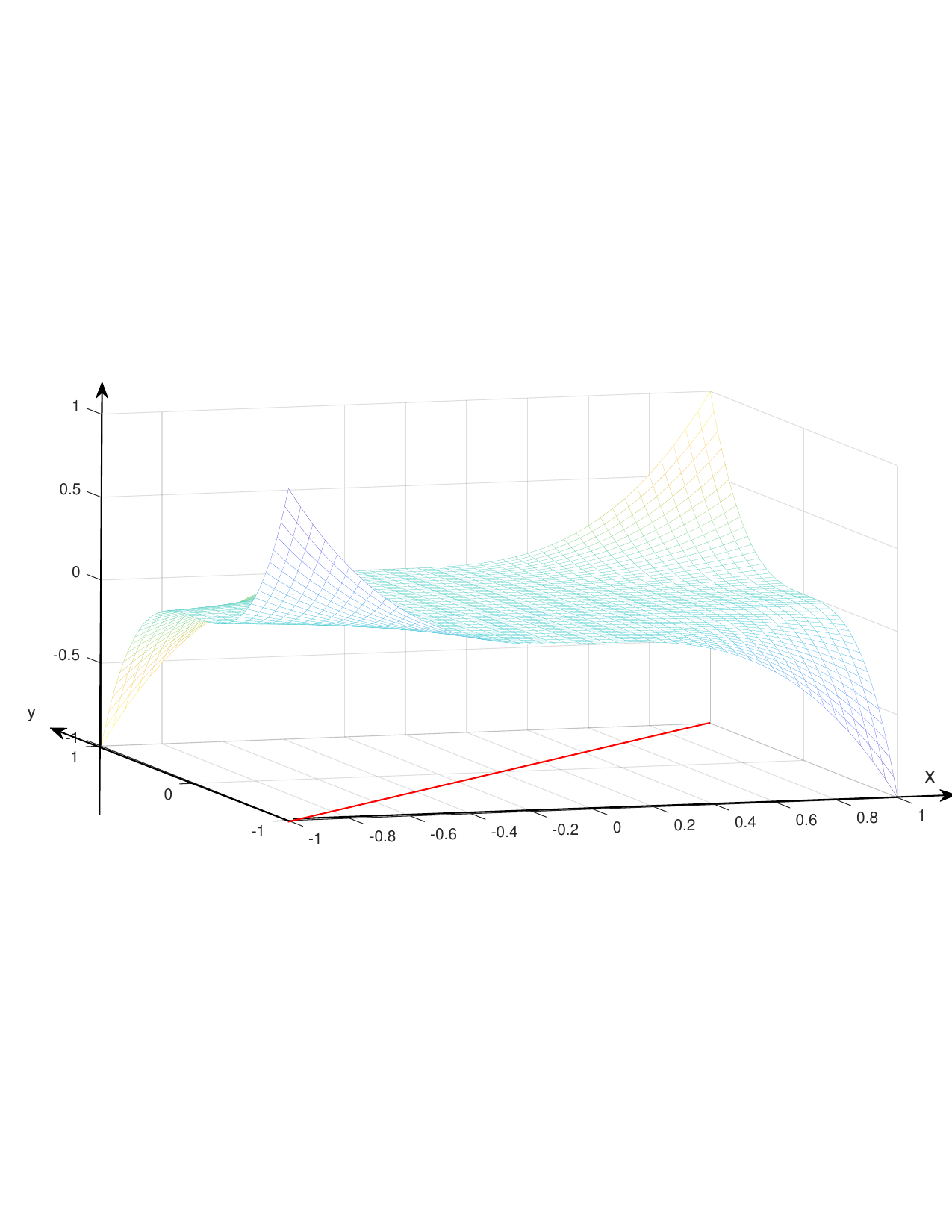}
\caption{Weyl's Kernel mesh for $d=3$ and $n=1$.}
\end{figure}

\begin{proposition} Let $d\in\NN_0$ and $n\in\NN$.

\nr{1} Weyl's kernel $W^{(d)}$ is symmetric and positive semidefinite.

\nr{2} The reproducing kernel Hilbert space of Weyl's kernel is the vector space of all $d$-homogeneous 
polynomial functions on $\RR^n$, that is,
\begin{equation*}
\cH_{W^{(d)}}(\RR^n)=\{f\colon \RR^n\ra \RR\mid f(x)
=\sum_{\alpha\in\NN_0^n(d)}w_\alpha x^\alpha,\ w_\alpha\in\RR\},
\end{equation*}
 and its dimension is
\begin{equation*}
\dim_\RR(\cH_{W^{(d)}}(\RR^n))={n+d-1\choose n-1}=\frac{(n+d-1)!}{(n-1)!d!}=:N.
\end{equation*}

\nr{3} The inner product of Weyl's reproducing kernel Hilbert space is
\begin{equation*}
\langle f,g\rangle_{W^{(d)}(\RR^n)}=\sum_{\alpha\in\NN_0^n(d)} \frac{\alpha!}{d!}w_\alpha v_\alpha,
\end{equation*}
for any
\begin{equation*}
f(x)=\sum_{\alpha\in\NN_0^n(d)}w_\alpha x^\alpha,\quad g(x)=\sum_{\alpha\in\NN_0^n(d)}v_\alpha x^\alpha.
\end{equation*}

\nr{4} An orthonormal basis of $\cH_{W^{(d)}}(\RR^n)$ is 
\begin{equation*}
\biggl\{\sqrt{\frac{\alpha!}{d!}} x^\alpha\mid \alpha\in\NN_0^n(d)\biggr\}.
\end{equation*}
\end{proposition}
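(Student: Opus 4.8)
The plan is to read off a feature map directly from the multinomial expansion \eqref{e:wedexy} and then run it through Theorem~\ref{t:kolmo} and the construction in Theorem~\ref{t:moore}. Put $N:=\card(\NN_0^n(d))$ and define $\Phi\colon\RR^n\ra\RR^N$ by $\Phi(x):=\bigl(\sqrt{C^\alpha_d}\,x^\alpha\bigr)_{\alpha\in\NN_0^n(d)}$, where $\RR^N$ carries the Euclidean inner product. Since $\langle x,y\rangle=\langle y,x\rangle$, symmetry of $W^{(d)}$ is immediate, and \eqref{e:wedexy} gives
\begin{equation*}
\langle\Phi(y),\Phi(x)\rangle_{\RR^N}=\sum_{\alpha\in\NN_0^n(d)}C^\alpha_d\,x^\alpha y^\alpha=W^{(d)}(x,y),\quad x,y\in\RR^n,
\end{equation*}
so $(\RR^N,\Phi)$ is a linearisation and the implication (2)$\Ra$(1) of Theorem~\ref{t:kolmo} yields part~(1).

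For part~(2) I would first verify that this linearisation is minimal. If $v\in\RR^N$ has $\langle v,\Phi(x)\rangle_{\RR^N}=0$ for all $x$, then $\sum_\alpha v_\alpha\sqrt{C^\alpha_d}\,x^\alpha\equiv 0$; as the monomials $\{x^\alpha\mid\alpha\in\NN_0^n(d)\}$ are linearly independent as functions on $\RR^n$, every coefficient vanishes and $v=0$, so $\Span\Phi(\RR^n)=\RR^N$. Feeding $(\RR^N,\Phi)$ into Theorem~\ref{t:moore}, the map $U\colon\RR^N\ra\cF_\RR(\RR^n)$ given by $Ug=\langle g,\Phi(\cdot)\rangle_{\RR^N}$ is injective and $\cH_{W^{(d)}}(\RR^n)=\ran(U)$. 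Because $(Ug)(x)=\sum_\alpha\sqrt{C^\alpha_d}\,g_\alpha x^\alpha$ and $g\mapsto(\sqrt{C^\alpha_d}\,g_\alpha)_\alpha$ is a linear bijection of $\RR^N$, the range of $U$ is precisely the space of $d$-homogeneous polynomials, and $\dim_\RR\cH_{W^{(d)}}(\RR^n)=N=\binom{n+d-1}{n-1}$, the last equality being the stars-and-bars count of multi-indices of length $d$ in $n$ variables.

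Parts~(3) and~(4) are then bookkeeping built on the unitarity of $U\colon\RR^N\ra\cH_{W^{(d)}}(\RR^n)$. If $f=\sum_\alpha w_\alpha x^\alpha$ and $g=\sum_\alpha v_\alpha x^\alpha$, their preimages under $U$ have coordinates $w_\alpha/\sqrt{C^\alpha_d}$ and $v_\alpha/\sqrt{C^\alpha_d}$, whence
\begin{equation*}
\langle f,g\rangle_{W^{(d)}(\RR^n)}=\sum_{\alpha\in\NN_0^n(d)}\frac{w_\alpha v_\alpha}{C^\alpha_d}=\sum_{\alpha\in\NN_0^n(d)}\frac{\alpha!}{d!}\,w_\alpha v_\alpha,
\end{equation*}
using $1/C^\alpha_d=\alpha!/d!$; this is part~(3). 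As a check, the section $W^{(d)}(\cdot,y)$ has coefficients $C^\alpha_d y^\alpha$ by \eqref{e:wedexy}, and the displayed formula then gives $\langle f,W^{(d)}(\cdot,y)\rangle=f(y)$, confirming the reproducing property. For part~(4) I would transport the standard orthonormal basis $\{e_\alpha\}$ of $\RR^N$ through $U$: since $(Ue_\alpha)(x)=\sqrt{C^\alpha_d}\,x^\alpha$, the family $\{\sqrt{C^\alpha_d}\,x^\alpha\mid\alpha\in\NN_0^n(d)\}$ is an orthonormal basis of $\cH_{W^{(d)}}(\RR^n)$; equivalently, part~(3) shows the $x^\alpha$ are pairwise orthogonal with $\|x^\alpha\|_{W^{(d)}(\RR^n)}^2=\alpha!/d!$, so dividing each $x^\alpha$ by its norm produces the same orthonormal family.

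The argument is routine once $\Phi$ is chosen; the two places that need care are the linear independence of the monomials $x^\alpha$ of degree $d$ (this is what makes $\Phi$ minimal, hence $U$ injective and the identification $\cH_{W^{(d)}}(\RR^n)=\ran(U)$ legitimate) and the consistent propagation of the multinomial coefficients $C^\alpha_d$ through $U$, so that the weight in the inner product is $\alpha!/d!$ and the normalising factor in the orthonormal basis is $\sqrt{C^\alpha_d}=\sqrt{d!/\alpha!}$ rather than its reciprocal.
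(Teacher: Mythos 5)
Your argument is correct and follows essentially the same route as the paper: the multinomial feature map $\Phi(x)=(\sqrt{C^\alpha_d}\,x^\alpha)_\alpha$, Kolmogorov's Theorem for positive semidefiniteness, and the construction from Moore's Theorem to identify $\cH_{W^{(d)}}(\RR^n)$ with $\ran(U)$; your explicit check of minimality via the linear independence of the degree-$d$ monomials is a detail the paper glosses over. One substantive point: your normalisation in part~(4) is the right one and the statement's is not. Since $\|x^\alpha\|^2_{W^{(d)}(\RR^n)}=\alpha!/d!$ by part~(3), the orthonormal basis is $\{\sqrt{d!/\alpha!}\,x^\alpha\}=\{\sqrt{C^\alpha_d}\,x^\alpha\}$, exactly the components of the feature map, and this is also what Theorem~\ref{t:on}(1) requires, as $\sum_\alpha (d!/\alpha!)\,x^\alpha y^\alpha=W^{(d)}(x,y)$ while $\sum_\alpha(\alpha!/d!)\,x^\alpha y^\alpha$ is not the kernel. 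The fraction in the displayed basis of part~(4), and in the paper's one-line verification of it, is inverted; your closing remark identifies precisely where that sign of the exponent has to be watched.
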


\begin{proof} The idea is that, first we get a minimal linearisation of Weyl's kernel and then we 
use the idea from the proof of Moore's Theorem.
We consider the vector space of all $d$-homogeneous polynomial functions on 
$\RR^n$, that is,
\begin{equation*}
\cH_d(\RR^n)=\{f\colon \RR^n\ra \RR\mid f(x)
=\sum_{\alpha\in\NN_0^n(d)}w_\alpha x^\alpha,\ w_\alpha\in\RR\},
\end{equation*}
of dimension
\begin{equation*}
\dim_\RR(\cH_d(\RR^n))={n+d-1\choose n-1}=\frac{(n+d-1)!}{(n-1)!d!}=:N.
\end{equation*}

We canonically identify $\RR^N$ with $\cH_d(\RR^n)$ and by this identification, we make $\cH_d(\RR^n)$ an
inner product space by letting
\begin{equation*}
\langle f,g\rangle_{\cH_d(\RR^n)}=\sum_{\alpha\in\NN_0^n(d)} 
w_\alpha v_\alpha,
\end{equation*}
for any
\begin{equation*}
f(x)=\sum_{\alpha\in\NN_0^n(d)}w_\alpha x^\alpha,\quad g(x)=\sum_{\alpha\in\NN_0^n(d)}v_\alpha x^\alpha.
\end{equation*}
In this way, $\RR^N$ and $\cH_d(\RR^n)$ are identified as Hilbert spaces as well.

Let $V\colon \RR^n\ra\cH_d(\RR^n)$ be defined, with the identification of $\RR^N$ and $\cH_d(\RR^n)$ as 
explained before, by
\begin{equation*}
V(x):=\bigl( (C^\alpha_d)^{1/2}x^\alpha\bigr)_{\alpha\in\NN_0^n(d)}=\bigl(\sqrt{\frac{d!}{\alpha!}}x^\alpha\bigr), 
\quad x\in\RR^n,
\end{equation*}
and observe that
\begin{align*}\langle V(x),V(y)\rangle_{\cH_d(\RR^n)} & = \sum_{\alpha\in\NN_0^n(d)} (C^\alpha_d)^{1/2} x^\alpha 
(C^\alpha_d)^{1/2}y^\alpha \\
& = \sum_{\alpha\in\NN_0^n(d)} C^\alpha_d x^\alpha y^\alpha=W^{(d)}(x,y),\quad x,y\in\RR^n.
\end{align*}
Then, for any $m\in\NN$ and any $x^{(1)},\ldots, x^{(m)}\in\RR^n$ we have
\begin{equation*}
\biggl[ W^{(d)}(x^{(i)},x^{(j)})\biggr]_{i,j=1}^m =\biggl[ \langle V(x^{(i)}),V(x^{(j)})\rangle_{\cH_d(\RR^n)}\biggr]_{i,j=1}^m \geq 0
\end{equation*}
since the latter matrix is a Gram matrix.
The linearisation $(\cH_d(\RR^n);V)$ is actually minimal but we can see this in a more direct way.

Recall that, for each $y\in\RR^n$ we can consider the functions $W^{(d)}_y\colon \RR^n\ra\RR$ 
defined by
\begin{equation}\label{e:wede}
W^{(d)}_y(x):=W^{(d)}(x,y)=\sum_{\alpha\in\NN_0^n(d)} \frac{d!}{\alpha!} y^\alpha x^\alpha,\quad x\in\RR^n,
\end{equation}
and hence
\begin{equation*}
W_y^{(d)}\in\Span\{x^\alpha\mid \alpha\in \NN_0^n(d)\}=\cH_d(\RR^n).
\end{equation*}
But, $\Span\{W_y^{(d)}\mid y\in\RR^n\}$ is finite dimensional and dense in the reproducing kernel Hilbert space $\cH_{W^{(d)}}(\RR^n)$,
hence
\begin{equation*}
\cH_{W^{(d)}}(\RR^n)=\cH_d(\RR^n)=\Span\{x^\alpha\mid \alpha\in\NN_0^n\}.
\end{equation*}

However, this is not an identification of Hilbert spaces since the 
inner product on the reproducing kernel Hilbert space $\cH_{W^{(d)}}(\RR^n)$ is given by 
\begin{equation}\label{e:lafegew}
\langle f,g\rangle_{W^{(d)}(\RR^n)}=\sum_{\alpha\in\NN_0^n(d)} \frac{\alpha!}{d!}w_\alpha v_\alpha,
\end{equation}
for any
\begin{equation*}
f(x)=\sum_{\alpha\in\NN_0^n(d)}w_\alpha x^\alpha,\quad g(x)=\sum_{\alpha\in\NN_0^n(d)}v_\alpha x^\alpha.
\end{equation*}
Indeed, in order to prove that \eqref{e:lafegew} is correct it is sufficient to check it for $f=W^{(d)}_x$ and $g=W^{(d)}_y$ for 
arbitrary $x,y\in \RR^n$, which holds since, in view of \eqref{e:wede}, we have
\begin{equation*}
\langle W^{(d)}_x,W^{(d)}_y\rangle_{W^{(d)}(\RR^n)}=W^{(d)}(y,x)= \sum_{\alpha\in\NN_0^n(d)} \frac{\alpha!}{d!} \bigl(\frac{d!}{\alpha!}
x^\alpha\bigr) \bigl(\frac{d!}{\alpha!}y^\alpha\bigr)=  \sum_{\alpha\in\NN_0^n(d)} \frac{d!}{\alpha!}x^\alpha y^\alpha.
\end{equation*}

Finally, we can verify that, for any $\alpha,\beta\in\NN_0^n(d)$ we have
\begin{equation*}
\langle \sqrt{\frac{\alpha!}{d!}} x^\alpha,\sqrt{\frac{\beta!}{d!}} x^\beta\rangle_{W^{(d)}(\RR^n)} 
=\begin{cases} 1, & \alpha=\beta, \\
0, & \alpha\neq \beta,\end{cases}
\end{equation*}
hence 
\begin{equation*}
\biggl\{\sqrt{\frac{\alpha!}{d!}} x^\alpha\mid \alpha\in\NN_0^n(d)\biggr\},
\end{equation*}
is orthonormal and, from the comparison of dimensions, it follows that actually this is an orthonormal basis.\end{proof}

\subsection{Gaussian Kernels}
For $\sigma>0$ and $n\in\NN$ the \emph{Gaussian Kernel} $G^\sigma\colon X\times X\ra\RR$, where 
$X\subseteq \RR^n$ is some nonempty set, is
\begin{equation*}
G^\sigma(x,y):=\exp\bigl(-\frac{\|x-y\|^2}{\sigma^2}\bigr),\quad x,y\in X.
\end{equation*}

The Gaussian kernel is related to the \emph{Normal Distribution}, or the \emph{Gauss Distribution},
\begin{equation*}
\frac{1}{\sigma^n (2\pi)^{n/2}} \exp\bigl(\frac{-\|x-\mu\|^2}{2\sigma^2}\bigr),\quad x\in\RR^n,
\end{equation*}
where $\mu\in\RR^n$ is the \emph{Expected Value} and $\sigma$ is the \emph{Standard Deviation}, that
shows up naturally in the Central Limit Theorem and many (most of the) natural processes.

\begin{figure}[h]
\includegraphics[width=14cm, trim = 0cm 8cm 0cm 7cm]{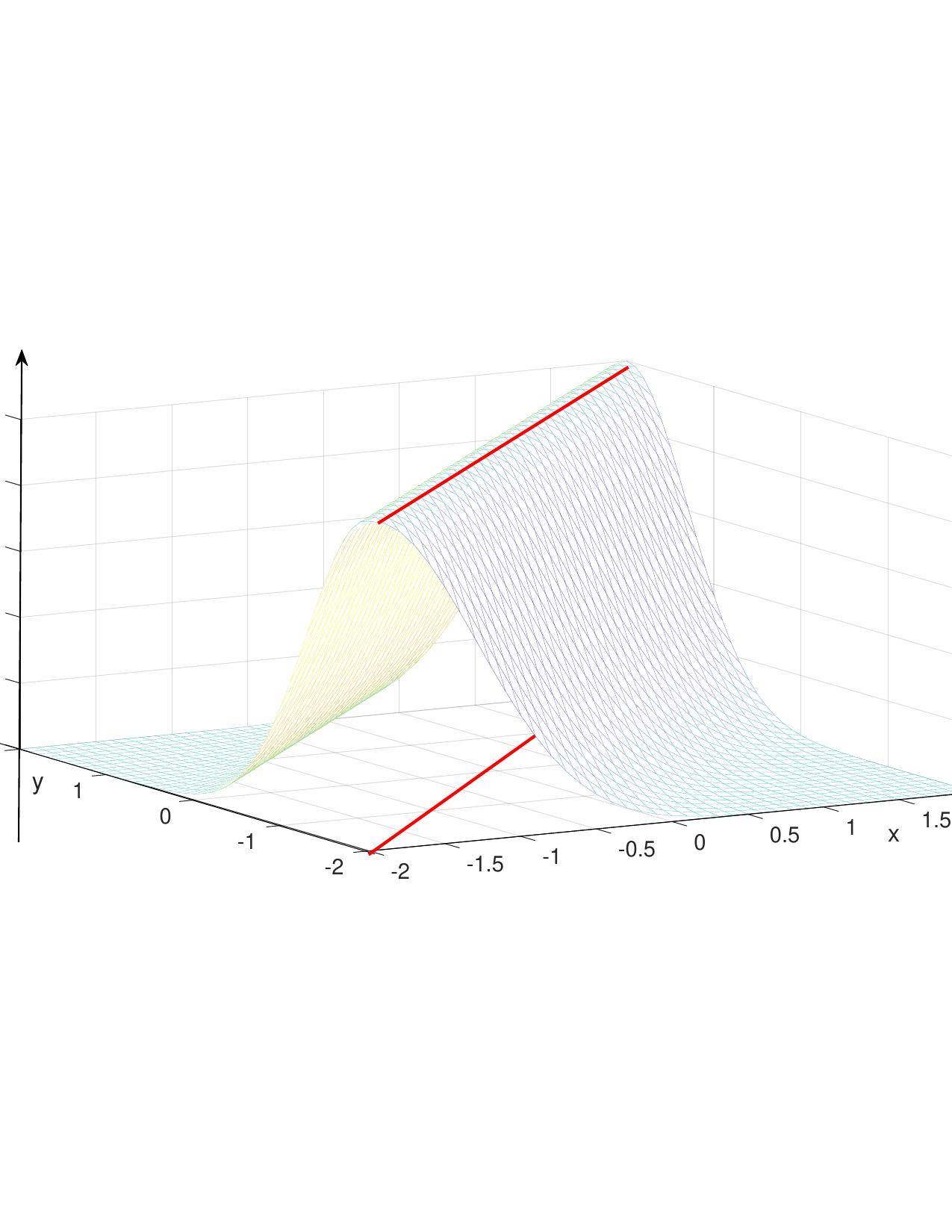}
\caption{The Gauss Kernel for $\sigma=1$ and $n=1$.}
\end{figure}

\begin{remark} Let $K\colon X\times X\ra\RR$ be a positive semidefinite kernel on $X\neq\emptyset$ and $f\colon X\ra\RR$ 
a function. Then, the kernel
\begin{equation*}
X\times X\ni (x,y)\mapsto f(x)K(x,y)f(y)\in\RR
\end{equation*}
is positive semidefinite. 

Indeed, for any $x_1,\ldots,x_m\in X$ we have
\begin{align*}
\bigl[f(x_i)K(x_i,x_j)f(x_j)\bigr]_{i,j=1}^m & = \Diag(f(x_i))_{i=1}^m \bigl[K(x_i,x_j)\bigr]_{i,j=1}^m
\Diag(f(x_i))_{i=1}^m \\
&=B^*AB\geq 0.
\end{align*}
\end{remark}

\begin{theorem} Let $X\subseteq\RR^n$ be a set with nonempty interior, $\sigma>0$ and $n\in\NN$, and consider the 
Gaussian kernel $G^\sigma\colon X\times X\ra\RR$, $G^\sigma(x,y)=\exp(-\|x-y\|^2/\sigma^2)$, $x,y\in X$.

\emph{(1)} The Gaussian kernel $G^\sigma$ is symmetric and positive semidefinite.

\nr{2} The reproducing kernel Hilbert space $\cH_{G^\sigma}(\RR^n)$ consists of all functions $f\colon X\ra\RR$ having representation
\begin{equation}\label{e:fexeb}
f(x)=\exp\bigl(-\frac{\|x\|^2}{\sigma^2}\bigr) \sum_{k=0}^\infty 
 \sum_{\alpha\in\NN_0^n(k)} w_\alpha x^\alpha,\quad 
x\in X,
\end{equation}
such that
\begin{equation}\label{e:sukoi}
\sum_{k=0}^\infty \frac{\sigma^{2k}}{2^k } \sum_{\alpha\in \NN_0^n(k)} \alpha! w_\alpha^2<\infty.
\end{equation}
The convergence in the series representation of $f$ is at least pointwise.

 \nr{3} The inner product of the reproducing kernel Hilbert space $\cH_{G^\sigma}(\RR^n)$ is
\begin{equation}\label{e:lafeger}
\langle f,g\rangle_{G^\sigma}=\sum_{k=0}^\infty \frac{\sigma^{2k}}{2^k}\sum_{\alpha\in\NN_0^n(k)} 
\alpha! w_\alpha v_\alpha,
\end{equation}
where $f,g\in\cH_{G^\sigma}(\RR^n)$ have representations as in \eqref{e:fexeb} and
\begin{equation}\label{e:gexe}
g(x)=\exp\bigl(-\frac{\|x\|^2}{\sigma^2}\bigr) \sum_{k=0}^\infty \sum_{\alpha\in\NN_0^n(k)} v_\alpha x^\alpha,\quad x\in X.
\end{equation}

\nr{4} For any $\alpha\in \NN_0^n$ let $\phi_\alpha\colon X\ra\RR$ be defined by
\begin{equation}\label{e:phialex}
\phi_\alpha(x):=\frac{2^{\frac{|\alpha|}{2}}}{\sigma^{|\alpha|}\sqrt{\alpha!}} \exp\bigl(-\frac{\|x\|^2}{\sigma^2}\bigr) x^\alpha,\quad x\in X.
\end{equation}
Then $\{\phi_\alpha\mid \alpha\in \NN_0^n\}$
is an orthonormal basis of $\cH_{G^\sigma}(\RR^n)$.
\end{theorem}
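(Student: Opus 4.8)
The plan is to mirror the treatment of Weyl's kernel: I would produce an explicit minimal linearisation of $G^\sigma$ valued in $\ell^2(\NN_0^n)$, transport it into $\cF_\RR(X)$ by the unitary $U$ constructed in the proof of Moore's Theorem (Theorem~\ref{t:moore}), and then read off (2)--(4) from the fact that $U$ carries the canonical orthonormal basis of $\ell^2(\NN_0^n)$ onto the functions $\phi_\alpha$ of \eqref{e:phialex}.

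First I would record the elementary identity $-\|x-y\|^2/\sigma^2=-\|x\|^2/\sigma^2+2\langle x,y\rangle/\sigma^2-\|y\|^2/\sigma^2$, so that
\begin{equation*}
G^\sigma(x,y)=\exp\bigl(-\tfrac{\|x\|^2}{\sigma^2}\bigr)\exp\bigl(\tfrac{2\langle x,y\rangle}{\sigma^2}\bigr)\exp\bigl(-\tfrac{\|y\|^2}{\sigma^2}\bigr).
\end{equation*}
Expanding the middle factor by the exponential series and each $\langle x,y\rangle^k$ by the Multinomial Theorem gives
\begin{equation*}
G^\sigma(x,y)=\sum_{k=0}^\infty\sum_{\alpha\in\NN_0^n(k)}\frac{2^k}{\sigma^{2k}\alpha!}\exp\bigl(-\tfrac{\|x\|^2}{\sigma^2}\bigr)x^\alpha\exp\bigl(-\tfrac{\|y\|^2}{\sigma^2}\bigr)y^\alpha=\sum_{\alpha\in\NN_0^n}\phi_\alpha(x)\phi_\alpha(y).
\end{equation*}
Setting $\Phi(x):=(\phi_\alpha(x))_{\alpha\in\NN_0^n}$, the normalisation $\sum_\alpha\phi_\alpha(x)^2=G^\sigma(x,x)=1$ shows $\Phi(x)\in\ell^2(\NN_0^n)$, and the displayed identity reads $G^\sigma(x,y)=\langle\Phi(y),\Phi(x)\rangle_{\ell^2}$; by Theorem~\ref{t:kolmo} this already yields (1) and exhibits $(\ell^2(\NN_0^n),\Phi)$ as a linearisation.

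The step I expect to be the main obstacle is showing this linearisation is minimal, i.e. that $\Span\Phi(X)$ is dense in $\ell^2(\NN_0^n)$. Suppose $c=(c_\alpha)_\alpha\in\ell^2(\NN_0^n)$ is orthogonal to every $\Phi(x)$. By the Cauchy Inequality the series $\sum_\alpha c_\alpha\phi_\alpha(x)$ converges absolutely for every $x\in\RR^n$, so
\begin{equation*}
P(x):=\exp\bigl(\tfrac{\|x\|^2}{\sigma^2}\bigr)\sum_\alpha c_\alpha\phi_\alpha(x)=\sum_\alpha c_\alpha\frac{2^{|\alpha|/2}}{\sigma^{|\alpha|}\sqrt{\alpha!}}x^\alpha
\end{equation*}
is a power series converging on all of $\RR^n$, hence an entire, in particular real-analytic, function. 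The orthogonality hypothesis makes $P$ vanish on the nonempty interior of $X$, so by the identity theorem $P\equiv 0$ and all its coefficients vanish; as these are nonzero multiples of the $c_\alpha$, we conclude $c=0$. This is precisely where the hypothesis that $X$ has nonempty interior is used.

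With minimality established, the construction in the proof of Theorem~\ref{t:moore} delivers the unitary $U\colon\ell^2(\NN_0^n)\ra\cH_{G^\sigma}$, $Ua=\langle a,\Phi(\cdot)\rangle_{\ell^2}$, certifying that $\cH_{G^\sigma}=\ran(U)$ is the reproducing kernel Hilbert space of $G^\sigma$. Since $U$ is unitary and sends the canonical orthonormal basis $(e_\alpha)_\alpha$ of $\ell^2(\NN_0^n)$ to $Ue_\alpha=\phi_\alpha$, the family $\{\phi_\alpha\mid\alpha\in\NN_0^n\}$ is an orthonormal basis of $\cH_{G^\sigma}$, which is (4). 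A general $f=Ua=\sum_\alpha a_\alpha\phi_\alpha$ then takes the form \eqref{e:fexeb} with $w_\alpha=a_\alpha\,2^{|\alpha|/2}/(\sigma^{|\alpha|}\sqrt{\alpha!})$, and the membership condition $\sum_\alpha a_\alpha^2<\infty$ becomes verbatim the summability condition \eqref{e:sukoi}, giving (2); the pointwise (indeed absolute) convergence of this representation follows from the Bessel condition $\sum_\alpha\phi_\alpha(x)^2=1<\infty$ via Theorem~\ref{t:on}(2). Finally, transporting the inner product through $U$ gives $\langle f,g\rangle_{G^\sigma}=\langle a,b\rangle_{\ell^2}=\sum_\alpha a_\alpha b_\alpha$ for $g=Ub$; substituting $a_\alpha=w_\alpha\sigma^{|\alpha|}\sqrt{\alpha!}/2^{|\alpha|/2}$ and the analogous $b_\alpha=v_\alpha\sigma^{|\alpha|}\sqrt{\alpha!}/2^{|\alpha|/2}$ produces \eqref{e:lafeger}, which is (3). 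The only remaining care is the routine justification of the series rearrangements by absolute convergence.
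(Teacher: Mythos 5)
Your proof is correct, and it reaches the theorem by a genuinely different route than the paper. Both arguments start from the same factorisation $G^\sigma(x,y)=\exp(-\|x\|^2/\sigma^2)\exp(2\langle x,y\rangle/\sigma^2)\exp(-\|y\|^2/\sigma^2)$ and the same expansion $G^\sigma(x,y)=\sum_\alpha\phi_\alpha(x)\phi_\alpha(y)$, but they diverge afterwards. The paper proves positive semidefiniteness by combining the positivity of Weyl's kernels, the remark on conjugating a kernel by a function, and the closedness of the cone of positive semidefinite kernels under pointwise limits; it then builds the candidate space $\cH_0$ of functions with representation \eqref{e:fexeb} directly, checks that $\{\phi_\alpha\}$ is an orthonormal system that is total in $\cH_0$, and identifies $\cH_0$ with $\cH_{G^\sigma}$ via Theorem~\ref{t:on}(2) together with the uniqueness statement of Proposition~\ref{p:unic}. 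You instead exhibit an explicit linearisation $\Phi\colon X\ra\ell^2(\NN_0^n)$, get (1) immediately from Theorem~\ref{t:kolmo}, and transport everything through the Moore unitary. What your route buys is a fully explicit treatment of the point the paper handles only cursorily: for $\cH_0$ to be well defined as a function space with the inner product \eqref{e:lafeger}, the coefficients $w_\alpha$ must be uniquely determined by the restriction of $f$ to $X$, and for infinite $\ell^2$-weighted combinations this requires more than the pairwise distinctness of the monomials that the paper invokes. Your identity-theorem argument (the series $\sum_\alpha c_\alpha 2^{|\alpha|/2}\sigma^{-|\alpha|}(\alpha!)^{-1/2}x^\alpha$ converges absolutely on all of $\RR^n$ by the Cauchy inequality against $\sum_\alpha\phi_\alpha(x)^2 e^{2\|x\|^2/\sigma^2}=e^{2\|x\|^2/\sigma^2}$, defines a real-analytic function, and hence vanishes identically once it vanishes on the nonempty interior of $X$) supplies exactly the injectivity/minimality that both constructions ultimately rest on, so your version is, if anything, the more complete one. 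The price is that you must explicitly verify minimality of the linearisation, which the paper's route sidesteps by working with Theorem~\ref{t:on}(2), where totality is read off from the definition of $\cH_0$.
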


\begin{proof} For any $x,y\in X$ we have
\begin{align}\label{e:gesimixy}
G^\sigma(x,y) & = \exp\bigl(-\frac{\|x-y\|^2}{\sigma^2}\bigr) = 
\exp\bigl(-\frac{\langle x-y,x-y \rangle}{\sigma^2}\bigr) \\
& = \exp\bigl(-\frac{\|x\|^2-2\langle x,y\rangle +\|y\|^2}{\sigma^2}\bigr)\nonumber \\
& = \exp\bigl(-\frac{\|x\|^2}{\sigma^2}\bigr) \exp\bigl(2\frac{\langle x,y\rangle}{\sigma^2}\bigr) 
\exp\bigl(-\frac{\|y\|^2}{\sigma^2}\bigr) \nonumber \\
& = \sum_{k=0}^\infty \frac{2^k}{k!\sigma^{2k}} \exp\bigl(-\frac{\|x\|^2}{\sigma^2}\bigr)  \langle x,y\rangle^k
\exp\bigl(-\frac{\|y\|^2}{\sigma^2}\bigr).\nonumber
\end{align}
Then use the fact that Weyl's kernels are all positive semidefinite, the previous remark, and the fact that the 
convex cone of positive semidefinite kernels is closed under pointwise convergence, see 
Remark~\ref{r:schwartz}, and conclude that $G^\sigma$ is a positive semidefinite kernel. In addition, from \eqref{e:gesimixy} and
 \eqref{e:wedexy}, for any, $x,y\in \RR^n$ we have
\begin{align}\label{e:gesixy}
G^\sigma(x,y) & =  \exp\bigl(-\frac{\|x\|^2}{2}\bigr) \sum_{k=0}^\infty \frac{2^k}{\sigma^{2k}}   \sum_{\alpha\in\NN_0^n(k)} \frac{1}
{\alpha!} x^\alpha y^\alpha\exp\bigl(-\frac{\|y\|^2}{2}\bigr) \\
& = \sum_{k=0}^\infty\sum_{\alpha\in\NN_0^n(k)} \biggl(\frac{2^{k/2}}{\sigma^k \sqrt{\alpha!}}  
\exp\bigl(-\frac{\|x\|^2}{2}\bigr) x^\alpha \biggr) 
\biggl( \frac{2^{k/2}}{\sigma^k \sqrt{\alpha!}}  \exp\bigl(-\frac{\|y\|^2}{2}\bigr) y^\alpha\biggr).\nonumber
\end{align}

Let $\cH_0$ denote the space of all functions $f\colon X\ra \RR$ with representation as in \eqref{e:fexeb} and under the 
condition \eqref{e:sukoi}. It is easy to see that the series in \eqref{e:fexeb} is pointwise convergent under the condition 
\eqref{e:sukoi}. Also, if $f$ and $g$ are two functions with representations as in \eqref{e:fexeb} and \eqref{e:gexe}, then by the 
Cauchy Inequality it follows that the series in the right side of \eqref{e:lafeger} is pointwise convergent and it is a simple exercise 
to see that it defines an inner product with respect to which $\cH_0$ is a Hilbert space.

Since $X$ is supposed to have nonempty interior, it follows that the homogeneous polynomials $x^\alpha$, for 
$\alpha\in \NN_0^n$, are distinct. In order to see that the system of functions $\{\phi_\alpha\mid \alpha\in \NN_0^n\}$
is an orthonormal basis of $\cH_0$, we firstly observe that if $\alpha,\beta\in\NN^n$ are distinct then from \eqref{e:lafeger} it 
follows that $\langle \phi_\alpha,\phi_\beta\rangle{G^\sigma}=0$ while, for $\alpha=\beta$, letting $k=|\alpha|$, we have
\begin{equation*}
\langle \phi_\alpha,\phi_\alpha\rangle_{G^\sigma} = \frac{\sigma^{2k}\alpha!}{2^k} \frac{2^{\frac{|\alpha|}{2}}}{\sigma^{|\alpha|}
\sqrt{\alpha!}} \frac{2^{\frac{|\alpha|}{2}}}{\sigma^{|\alpha|}\sqrt{\alpha!}}=1.
\end{equation*} 
This proves that the system of functions $\{\phi_\alpha\mid \alpha\in \NN_0^n\}$ is orthonormal. From the definition of $\cH_0$, it 
follows that it is also total in $\cH_0$.

Finally, from \eqref{e:phialex} and \eqref{e:gesixy} it follows that
\begin{equation*}
G^\sigma(x,y)=\exp(-\|x-y\|/\sigma^2)= \sum_{k=0}^\infty \sum_{|\alpha|=k}\phi_\alpha(x)\phi_\alpha(y),\quad x,y\in X,
\end{equation*}
and
\begin{equation*}
1=G^\sigma(x,x)=  \sum_{k=0}^\infty \sum_{|\alpha|=k}|\phi_\alpha(x)|^2<\infty,\quad x\in X,
\end{equation*}
hence, by Theorem~\ref{t:on} and the uniqueness of the reproducing kernel Hilbert space associated to a positive semidefinite 
kernel, see Proposition~\ref{p:unic}, it follows that $\cH_{G^\sigma(\RR^n)}=\cH_0$.
\end{proof}

\section{Mercer's Theorem}\label{s:mt}

Throughout this section $X$ will be a compact metric space. We denote
\begin{equation}\label{e:cefax}
\cC_\FF(X):=\{f\colon X\ra \FF\mid f\mbox{ continuous on }X\},
\end{equation}
which, when endowed with the sup-norm $\|\cdot\|_\infty$, becomes a Banach space.

Let $\nu$ be a finite Borel measure on $X$. Such a measure is always both inner and outer regular, see 
Theorem 5.2.1 in H.~Geiss and S.~Geiss \cite{Geiss2025}, but we will not need this here.
As usual, we denote
\begin{equation}\label{e:lefax}
\cL_\FF^2(X,\nu):=\{f\colon X\ra \FF\mid f\mbox{ is measurable and }\int_X|f(x)|^2\de \nu(x)<\infty\},
\end{equation}
with pre-inner product
\begin{equation}\label{e:lafeg}
\langle f,g\rangle_{L^2}:=\int_X f(x)\ol{g(x)}\de \nu(x),
\end{equation}
and note that
\begin{align*}
\cN_\FF(X,\nu) & :=\{f\colon X\ra \FF\mid f\mbox{ is null }\nu\mbox{-a.e.}\} \\
& =\{f\in \cL^2_\FF(X,\nu)\mid \int_X |f(x)|^2\de\nu(x)=0\}\\
& \  = \{f\in\cL_\FF^2(X,\nu)\mid \int_X f(x)
\ol{g(x)}\de\nu(x)=0\mbox{ for all }g\in \cL^2_\FF(X,\nu)\},
\end{align*}
is a vector subspace of $\cL^2_\FF(X,\nu)$. By factorisation we obtain
\begin{equation}\label{e:lefex} 
L^2_\FF(X,\nu):=\cL^2_\FF(X,\nu)/\cN_\FF(X,\nu),
\end{equation}
which becomes a Hilbert space with the inner product induced by $\langle\cdot,\cdot\rangle_{L^2}$.

Clearly, $\cC_\FF(X)$ is naturally included into the vector space $\cL^2_\FF(X,\nu)$. 
Let $E_X\colon \cC_\FF(X)\ra L^2(X,\nu)$ be the composition of the inclusion 
$\cC_\FF(X)\hookrightarrow \cL^2_\FF(X,\nu)$ with the canonical projection 
$\pi \colon \cL^2_\FF(X,\nu)\ra L^2(X,\nu)$. This is a bounded linear operator and
\begin{equation*}
\|E_X\|\leq \nu(X)^{1/2}.
\end{equation*}
The inclusion of $\cC_\FF(X)$ in $\cL^2(X,\nu)$ is dense, although $\cL^2(X,\nu)$ is only a seminormed 
space and not Hausdorff, in general. However, since the canonical projection $\pi$ is surjective and continuous,
this implies that $\pi(\cC_\FF(X))$ is dense in $L^2(X,\nu)$.

The support of the Borel measure $\nu$ is defined as
\begin{equation}\label{e:supen}
\supp(\nu):=\{x\in X\mid \nu(V)\neq 0\mbox{ for all open neighbourhoods }V\mbox{ of }x\}.
\end{equation}
In general,
if $\cC_\FF(X)\cap \cN_\FF(X,\nu)$ is not trivial, $E_X$ does not induce an embedding of $\cC_\FF(X)$ into 
$L^2(X,\nu)$. As can be easily seen, if $\supp(\nu)=X$ then $\cC_\FF(X)\cap \cN_\FF(X,\nu)=0$ and, 
in this case, we have an embedding $E_X:\cC_\FF(X)\hookrightarrow L^2(X,\nu)$.

\subsection{Integral Operators with Continuous Symbols}

\begin{proposition}\label{p:leka}
Assume that $K\colon X\times X\ra\FF$ is a continuous kernel and $\nu$ is a finite Borel measure on $X$.

\nr{i} The formula
\begin{equation}\label{e:leka}
(L_Kf)(x):=\int_X K(x,t)f(t)\de\nu(t)
\end{equation}
correctly defines a bounded linear operator $L_K\colon L^2_\FF(X,\nu)\ra L^2_\FF(X,\nu)$ and
\begin{equation}\label{e:lekal}
\|L_K\|\leq \biggl(\int_X\int_X|K(x,t)|^2\de\nu(x)\de\nu(t)\biggr)^{1/2}.
\end{equation}

\nr{ii} $\ran(L_K)\subseteq \cC_\FF(X)$ and, when considering the integral operator defined by \eqref{e:leka}
with codomain 
$\cC_\FF(X)$, denoted by $L_{K,\mathrm{c}}\colon L^2_\FF(X,\nu)\ra 
\cC_\FF(X)$, we have
\begin{equation}\label{e:lekac}
\|L_{K,\mathrm{c}}\|\leq \nu(X)^{1/2} \sup_{x,t\in X}|K(x,t)|,
\end{equation}
and, consequently,
\begin{equation}\label{e:lekale}
\|L_K\|\leq \nu(X)\sup_{x,t\in X}|K(x,t)|.
\end{equation}

\nr{iii} $L_{K,\mathrm{c}}$ and, consequently $L_K$, are compact operators.

\nr{iv} $L_K$ is a Hilbert-Shmidt operator and
\begin{equation}\label{e:hs}
\|L_K\|_{\mathrm{HS}}^2=\int_X\int_X|K(x,t)|^2\de\nu(x)\de\nu(t).
\end{equation}
\end{proposition}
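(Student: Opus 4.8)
The plan is to establish the four assertions in sequence, each resting on its predecessors, and all on two elementary facts about $K$: being continuous on the compact space $X\times X$, it is bounded, with $M:=\sup_{x,t\in X}|K(x,t)|<\infty$, and uniformly continuous. For (i), fixing $x\in X$ and applying the Cauchy--Schwarz inequality to the integral in \eqref{e:leka} gives $|(L_Kf)(x)|^2\leq(\int_X|K(x,t)|^2\de\nu(t))\,\|f\|_{L^2}^2$; integrating in $x$ and using Tonelli's theorem produces \eqref{e:lekal} directly, while measurability and boundedness of $K$ make $L_Kf$ well defined and independent of the representative of $f$ (since $f=0$ $\nu$-a.e.\ forces $(L_Kf)(x)=0$ pointwise). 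For (ii), continuity of $L_Kf$ follows from the uniform continuity of $K$ via
\begin{equation*}
|(L_Kf)(x)-(L_Kf)(x')|\leq\Bigl(\int_X|K(x,t)-K(x',t)|^2\de\nu(t)\Bigr)^{1/2}\|f\|_{L^2};
\end{equation*}
the bound \eqref{e:lekac} comes from Cauchy--Schwarz against the constant $1$ (whose $L^2$-norm is $\nu(X)^{1/2}$), and \eqref{e:lekale} from the factorisation $L_K=E_XL_{K,\mathrm{c}}$ together with $\|E_X\|\leq\nu(X)^{1/2}$.

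The heart of the proof is (iii), which I would settle with the Arzel\`a--Ascoli theorem. The image of the closed unit ball of $L^2(X,\nu)$ under $L_{K,\mathrm{c}}$ is uniformly bounded by \eqref{e:lekac} and, by the displayed estimate used for (ii), equicontinuous, its modulus of continuity being governed by that of $K$ uniformly over $\|f\|_{L^2}\leq1$. Therefore this image is relatively compact in $\cC_\FF(X)$, so $L_{K,\mathrm{c}}$ is compact, and composing with the bounded inclusion $E_X$ shows that $L_K$ is compact as well.

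For (iv), I would fix an orthonormal basis $\{e_j\}_{j\in\cJ}$ of $L^2(X,\nu)$ and note that
\begin{equation*}
\langle L_Ke_j,e_i\rangle_{L^2}=\int_X\int_X K(x,t)e_j(t)\ol{e_i(x)}\de\nu(t)\de\nu(x)
\end{equation*}
is exactly the Fourier coefficient of $K$ relative to the function $(x,t)\mapsto e_i(x)\ol{e_j(t)}$. Since these functions form an orthonormal basis of $L^2(X\times X,\nu\times\nu)$ and $K$ lies in that space (being bounded, with $\nu\times\nu$ finite), Parseval's identity gives $\|L_K\|_{\mathrm{HS}}^2=\sum_{i,j}|\langle L_Ke_j,e_i\rangle|^2=\|K\|_{L^2(X\times X)}^2$, which is precisely \eqref{e:hs}.

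I expect the only genuinely substantive step to be the equicontinuity estimate underlying the Arzel\`a--Ascoli argument in (iii); the remaining difficulties are mild, namely the measurability and representative-independence bookkeeping in (i) and the invocation of the tensor-product orthonormal basis of $L^2(X\times X,\nu\times\nu)$ in (iv).
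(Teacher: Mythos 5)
Your proposal is correct and, for items (i)--(iii), follows essentially the same path as the paper: Cauchy--Schwarz plus Tonelli for \eqref{e:lekal}, the estimate $|(L_Kf)(x)-(L_Kf)(x')|\leq \nu(X)^{1/2}\sup_{t}|K(x,t)-K(x',t)|\,\|f\|_2$ combined with uniform continuity of $K$ on the compact $X\times X$ for (ii), and Arzel\`a--Ascoli (uniform boundedness plus equicontinuity of the image of the unit ball) for (iii); the paper phrases the compactness via extraction of uniformly convergent subsequences, which is the same argument.

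The only genuine divergence is in (iv). You compute $\|L_K\|_{\mathrm{HS}}^2=\sum_{i,j}|\langle L_Ke_j,e_i\rangle_{L^2}|^2$ and identify the matrix entries as the Fourier coefficients of $K$ with respect to the tensor-product orthonormal basis $\{e_i\otimes\ol{e_j}\}$ of $L^2(X\times X,\nu\times\nu)$, so that Parseval in the product space gives \eqref{e:hs} in one stroke. The paper instead applies Parseval fibrewise: for each fixed $x$ it expands $K(x,\cdot)\in L^2(X,\nu)$ to get $\int_X|K(x,t)|^2\de\nu(t)=\sum_n|(L_K\ol{e_n})(x)|^2$, then integrates in $x$ and interchanges sum and integral by bounded convergence, using that $(\ol{e_n})_n$ is again an orthonormal basis. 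Your route is shorter and is exactly the mechanism behind the unitarity of the map $K\mapsto L_K$ discussed in Remark~\ref{r:hs}, but it silently uses two standard facts that you should flag: that $L^2(X,\nu)$ is separable (so the double-index family is countable) and that the tensor products of two orthonormal bases form an orthonormal \emph{basis}, not merely an orthonormal system, of $L^2(X\times X,\nu\times\nu)$; the paper's fibrewise argument avoids the second of these at the cost of the convergence-interchange step. Both are complete proofs.
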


\begin{proof} (i)
Let $f\in \cL^2_\FF(X,\nu)$ be arbitrary and note that $X\ni t\mapsto K(x,t)\in\FF$ is continuous, hence
in $\cL_\FF^2(X,\nu)$, which implies that $K(x,\cdot)f(\cdot)\in \cL^1_\FF(X,\nu)$, 
the integral in \eqref{e:leka} 
converges and $L_Kf$ is a measurable function. Also,
\begin{align*}
\|L_Kf\|_2^2 & = \int_X \bigl| \int_X K(x,t)f(t)\de\nu(t)\bigr|^2\de\nu(x)\\
\intertext{which, by the CBS Inequality is dominated by}
& \leq \int_X\bigl( \int_X |K(x,t)|^2\de\nu(t) \int_X|f(t)|^2\de\nu(t)\bigr)\de\nu(x)=\|f\|^2_2 \int_X \int_X |K(x,t)|^2\de\nu(t) \de\nu(x),
\end{align*}
which proves that $L_Kf\in\cL^2_\FF(X,\nu)$. It is a standard argument to see that $L_Kf$, when considered as 
an element in $L^2_\FF(X,\nu)$, does not depend on the representative of $f\in L^2_\FF(X,\nu)$. Hence, 
$L_K\colon L^2_\FF(X,\nu)\ra L^2_\FF(X,\nu)$ is a linear bounded operator and its operator norm is dominated as in
\eqref{e:lekal}.

(ii) For each $x_1,x_2\in X$ we have
\begin{align}
|(L_Kf)(x_1)-(L_Kf)(x_2)| & = \bigl| \int_X \bigl( K(x_1,t)-K(x_2,t)\bigr)f(t)\de \nu(t)\bigr|\nonumber \\
\intertext{which, by the CBS Inequality, is dominated by}
& \leq \bigl( \int_X |K(x_1,t)-K(x_2,t)|^2\de\nu(t)\bigr)^{1/2} \bigl( \int_X |f(t)|^2\de\nu(t)\bigr)^{1/2} \nonumber\\
& \leq \nu(X)^{1/2} \sup_{t\in X}|K(x_1,t)-K(x_2,t)| \|f\|_2.\label{e:lekax}
\end{align}
Since $K$ is continuous on the compact space $X\times X$ it follows that $K$ is uniformly continuous hence
$L_Kf\in\cC_\FF(X)$. It is a standard argument to see that $L_Kf$ as a function in $\cC_\FF(X)$ does not depend 
on the representative of $f\in L^2_\FF(X)$, hence we have a linear operator 
$L_{K,\mathrm{c}}\colon L^2_\FF(X)\ra \cC_\FF(X)$.

Similarly as before, for arbitrary $f\in \cL^2_\FF(X,\nu)$ we have
\begin{equation}\label{e:lekam}
|(L_{K,\mathrm{c}}f)(x)|\leq \nu(X)^{1/2}\sup_{t\in X}|K(x,t)| \|f\|_2,\quad x\in X,
\end{equation}
hence the inequalities \eqref{e:lekac} and \eqref{e:lekale} hold.

(iii) In order to prove that the operator $L_{K,\mathrm{c}}$ is compact we have to prove that for any bounded 
sequence $(f_n)_n$ in $L^2_\FF(X,\nu)$ there exists a subsequence $(f_{k_n})_n$ such that the sequence 
$(L_{K,\mathrm{c}}f_{k_n})_n$ converges uniformly in $\cC_\FF(X)$. This is a bit more general than what we recall in 
Subsection~\ref{ss:co}.

Indeed, assuming that
\begin{equation*}
\|f_n\|_2\leq C,\quad n\in\NN,
\end{equation*}
on the one hand, from \eqref{e:lekam} we get
\begin{equation*}
\|L_{K,\mathrm{c}}f_n\|_\infty \leq \nu(X)^{1/2} \|K\|_\infty \|f_n\|_2\leq \nu(X)^{1/2} \|K\|_\infty C,\quad n\in\NN,
\end{equation*}
hence the sequence $(L_{K,\mathrm{c}}f_{k_n})_n$ is uniformly bounded.

On the other hand, for each $x_1,x_2\in X$, from \eqref{e:lekax} we have
\begin{align*}
|(L_{K,\mathrm{c}}f_n)(x_1)-(L_{K,\mathrm{c}}f_n)(x_2)| & \leq \nu(X)^{1/2} \sup_{t\in X}|K(x_1,t)-K(x_2,t)| \|f_n\|_2 \\ & \leq \nu(X)^{1/2} \sup_{t\in X}|K(x_1,t)-K(x_2,t)| C,
\end{align*}
hence the sequence $(L_{K,\mathrm{c}}f_{k_n})_n$ is equicontinuous. By the 
Arzel\`a-Ascoli Theorem, the set
$\{L_{K,\mathrm{c}}f_{k_n}\mid n\in\NN\}$ is precompact in $\cC_\FF(X)$, hence there exists a subsequence $(f_{k_n})_n$ such 
that the sequence 
$(L_{K,\mathrm{c}}f_{k_n})_n$ converges uniformly in $\cC_\FF(X)$.

(iv) Since $X$ is a compact metric space and $\nu$ is a finite Borel measure it follows that the Hilbert space
$L^2_\FF(X,\nu)$ is separable. Let $(e_n)_n$ be an arbitrary orthonormal basis of $L^2_\FF(X,\nu)$. Then, for each 
$x\in X$, considering the function $K(x,\cdot)\in L^2_\FF(X,\nu)$, by Parseval's Identity we have
\begin{align}
\int_X |K(x,t)|^2\de\nu(t) & = \sum_{n=1}^\infty |\langle K(x,\cdot),e_n\rangle_{L^2_\FF(X,\nu)}|^2 \nonumber \\
& = \sum_{n=1}^\infty \bigl| \int_X K(x,t)\ol{e_n(t)}\de\nu(t)\bigr|^2 \nonumber\\
& = \sum_{n=1}^\infty |(L_K\ol{e_n})(x)|^2.\label{e:intexka}
\end{align}
Note that both sides of \eqref{e:intexka} are made of continuous functions with respect to the variable $x$ and hence we can integrate over $X$ with respect to the finite Borel measure $\nu$ and get
\begin{align}
\int_X\int_X |K(x,y)|^2\de\nu(y)\de\nu(x)  & = \int_X \sum_{n=1}^\infty |(L_K\ol{e_n})(x)|^2\de\nu(x)\nonumber \\
\intertext{which, by applying the Bounded Convergence Theorem, equals}
& = \sum_{n=1}^\infty \int_X |(L_K\ol{e_n})(x)|^2\de\nu(x) \nonumber \\
& = \sum_{n=1}^\infty \|L_K\ol{e_n}\|_{L^2_\FF(X,\nu)}^2<\infty.\label{e:intexx}
\end{align}
But the sequence $(\ol{e_n})_n$ is an orthonormal basis of $L^2_\FF(X,\nu)$ as well and hence, this means that the 
operator $L_K$ is a Hilbert-Schimdt operator and its Hilbert-Schmidt norm is given by the formula \eqref{e:hs}, see 
Subsection~\ref{ss:hso}.
\end{proof}

The operator $L_K$ defined as in \eqref{e:leka} is called the \emph{integral operator} 
with \emph{symbol} $K$.

\begin{remark}\label{r:hs} Proposition~\ref{p:leka} is a special case of a more general theory, that of D.~Hilbert 
and E.~Schmidt on square integrable kernels. With a similar proof to that of the item (iv), one can prove that if 
$K\in L^2_\FF(X\times X,\nu\times \nu)$ then the formula \eqref{e:leka} provides a bounded linear operator 
$L_K\colon L^2_\FF(X,\nu)\ra L^2_\FF(X,\nu)$ which is a Hilbert-Schmidt operator, in particular it is compact, and such 
that \eqref{e:hs} holds as well hence we have a linear transformation
\begin{equation}\label{e:hsmap}
L^2_\FF(X\times X,\nu\times \nu)\ni K\mapsto L_K\in\cB_2(L^2_\FF(X,\nu)),
\end{equation}
where $\cB_2(L^2_\FF(X,\nu))$ denotes the Hilbert-Schmidt ideal of operators on the Hilbert space $L^2_\FF(X,\nu)$,
and this linear transformation is isometric. In particular, 
the correspondence between $K$ and $L_K$ is one-to-one, in the sense that 
the kernels are unique $\nu$-a.e. According to a celebrated theorem of D.~Hilbert and E.~Schmidt, this map is 
surjective, that is, to each Hilbert-Schmidt operator $T\in \cB_2(L^2_\FF(X,\nu))$ there corresponds a kernel
$K\in L^2_\FF(X\times X,\nu\times \nu)$ such that $T=L_K$, that is, $T$ is the integral operator with symbol $K$, 
hence we have \eqref{e:hsmap} is a unitary operator. 
Recall that 
$\cB_2(L^2_\FF(X,\nu))$ is a Hilbert space under the inner product
\begin{equation*}
\langle T,S\rangle_{\mathrm{HS}}=\tr(T^*S),\quad T,S\in \cB_2(L^2_\FF(X,\nu)).
\end{equation*}
To put this in a different perspective, 
recall that, for our setting, of a compact metric space $X$ and a finite Borel measure 
$\nu$ on $X$, the Hilbert space $L^2_\FF(X,\nu)$ is separable and so is $L^2_\FF(X\times X,\nu\times \nu)$. Actually, 
there is a canonical identification of $L^2_\FF(X\times X,\nu\times \nu)$ with the tensor product 
$L^2_\FF(X,\nu)\otimes L^2_\FF(X,\nu)$, given by $(\phi\otimes \psi)(x,y)=\phi(x)\psi(y)$. 

Now, to tackle the surjectivity of the map \eqref{e:hsmap},
if $T\in\cB_2(L^2_\FF(X,\nu))$, then $T=A+\iac B$ with $A=A^*\in\cB_2(L^2_\FF(X,\nu))$ and 
$B=B^*\in\cB_2(L^2_\FF(X,\nu))$, hence it is sufficient to find the symbol of $T$ under the extra assumption that 
$T=T^*$. Then, by the spectral theory of compact operators, see Subsection~\ref{ss:co}, there exists an orthonormal basis 
$\{\phi_n\mid n\in\NN\}$ of $L^2_\FF(X,\nu)$ made up by eigenfunctions of $T$ and such that 
$T\phi_n=\lambda_n \phi_n$, where the eigenvalues $\lambda_n$ are counted according to their multiplicities. 
Then, one takes the kernel $K$ defined by
\begin{equation}\label{e:kaxy}
K(x,y):=\sum_{n=1}^\infty \lambda_n \phi_n(x)\ol{\phi_n(y)},\quad x,y\in X,
\end{equation}
and by a straightforward calculation 
it can be proven that $T=L_K$. The convergence of the series is guaranteed in 
$L^2_\FF(X\times X,\nu\times\nu)$. 
\end{remark}

\begin{remark}\label{r:riemann} If $\nu$ is a finite Borel measure on the compact 
metric space $X$ and $\phi\in L^1_\FF(X,\nu)$ 
is bounded, then $\int_X \phi(x)\de\nu(x)$ can be approximated by 
Riemann sums $R(\phi;\cP;\bx)$ associated to tagged measurable partitions $(\cP;\bx)$, 
that is, $\cP=\{A_i\}_{i=1}^n$ is a 
measurable partition of $X$ and $\bx:=\{x_1,\ldots,x_n\}$, with $x_i\in A_i$, for all $i=1,\ldots,n$, defined by
\begin{equation*}
R(\phi;\cP;\bx):=\sum_{i=1}^n f(x_i)\nu(A_i).
\end{equation*}
This is because, without loss of generality, we can assume that $\phi$ is real and then 
$\int_X \phi(x)\de\nu(x)$ can be approximated by Darboux sums $D_\mathrm{min}(\phi;\cP)$ 
and $D_\mathrm{max}(\phi;\cP)$, where 
\begin{equation*}
D_\mathrm{min}(\phi;\cP):=\sum_{i=1}^n m_i \nu(A_i),\quad D_\mathrm{max}(\phi;\cP):=\sum_{i=1}^n M_i \nu(A_i),
\end{equation*} where $\cP=\{A_i\}_{i=1}^n$ is a measurable partition of $X$, 
$m_i=\inf_{x\in A_i} \phi(x)$ and $M_i:=\sup_{x\in A_i} \phi(x)$, for all $i=1,\ldots,n$, e.g.\ see Chapter 12 in 
\cite{Baer}, and then, for any 
$\bx:=\{x_1,\ldots,x_n\}$ with $x_i\in A_i$, for all $i=1,\ldots,n$, we have
\begin{equation*}
D_\mathrm{min}(\phi;\cP) \leq R(\phi;\cP;\bx)\leq D_\mathrm{max}(\phi;\cP).
\end{equation*}
\end{remark}

\begin{proposition}\label{p:lekad} 
Let $\nu$ be a finite Borel measure on the compact metric space $X$ and $K\in\cK_\FF(X)$
a continuous kernel. Consider the bounded linear operator $L_K\colon L^2_\FF(X,\nu)\ra L^2_\FF(X,\nu)$ defined by
\eqref{e:leka}.

\nr{i} $L_K^*=L_{K^*}$, where $K^*(x,y)=\ol{K(y,x)}$ for all $x,y\in X$.

\nr{ii} If $K$ is Hermitian/symmetric then $L_K=L_K^*$.

\nr{iii} If $K$ is Hermitian/symmetric and positive semidefinite then $L_K\geq 0$, in the sense that
$\langle L_K,f,f\rangle_2\geq 0$ for all $f\in L^2_\FF(X,\nu)$.
\end{proposition}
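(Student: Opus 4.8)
The plan is to treat the three items in order, using (i) as the engine for (ii) and isolating the one genuinely new idea in (iii). For (i) I would compute $\langle L_K f,g\rangle_2$ straight from the definition \eqref{e:leka}. Since $K$ is continuous on the compact space $X\times X$ it is bounded, so by the Cauchy--Bunyakovsky--Schwarz inequality and $\nu(X)<\infty$ the function $(x,t)\mapsto K(x,t)f(t)\overline{g(x)}$ is absolutely integrable on $X\times X$ with respect to $\nu\times\nu$; this legitimises Fubini's Theorem. Interchanging the order and recognising $\int_X \overline{K(x,t)}\,g(x)\,\de\nu(x)=\overline{(L_{K^*}g)(t)}$, because $\overline{K(x,t)}=K^*(t,x)$, yields $\langle L_K f,g\rangle_2=\langle f,L_{K^*}g\rangle_2$ for all $f,g\in L^2(X,\nu)$, which is exactly $L_K^*=L_{K^*}$. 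Item (ii) is then immediate: if $K$ is Hermitian/symmetric then $K^*(x,y)=\overline{K(y,x)}=K(x,y)$, so $K^*=K$ and (i) gives $L_K^*=L_{K^*}=L_K$.

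For (iii) the starting point (again via Fubini, justified as above) is the identity
\begin{equation*}
\langle L_K f,f\rangle_2=\int_X\int_X K(x,t)\,f(t)\,\overline{f(x)}\,\de\nu(t)\,\de\nu(x),
\end{equation*}
so everything reduces to showing this double integral is nonnegative. I would first treat $f\in\cC_\FF(X)$ and approximate the integral by Riemann-type sums adapted to $\nu$: using that $X$ is a compact metric space, for each $\varepsilon>0$ I partition $X$ into finitely many nonempty Borel sets $E_1,\dots,E_n$ of diameter less than $\varepsilon$ and choose points $x_i\in E_i$. The resulting sampled sum is $\sum_{i,j=1}^n K(x_i,x_j)\,\overline{\alpha_i}\,\alpha_j$ with $\alpha_i:=f(x_i)\,\nu(E_i)$, and this is nonnegative precisely because $K$ is positive semidefinite. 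By the uniform continuity of $K$ on $X\times X$ and of $f$ on $X$, the integrand differs from its sampled value by a quantity tending to $0$ uniformly as $\varepsilon\to 0$; since the total mass $\nu(X)^2$ is finite, the sampled sums converge to the double integral, and the limit inherits nonnegativity. To pass from $\cC_\FF(X)$ to all of $L^2(X,\nu)$ I would use that $L_K$ is bounded by Proposition~\ref{p:leka}, so $f\mapsto\langle L_K f,f\rangle_2$ is continuous, and that $\pi(\cC_\FF(X))$ is dense in $L^2(X,\nu)$; nonnegativity therefore passes to the closure.

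The main obstacle is the Riemann-sum step in (iii): the delicate point is to construct the partition $\{E_i\}$ inside the abstract compact metric space and to control the approximation error uniformly using the \emph{joint} uniform continuity of $K$, without relying on any product, interval, or lattice structure. Once the sampled sums are recognised as the positive-semidefinite quadratic forms of the hypothesis with weights $\alpha_i=f(x_i)\nu(E_i)$, the remaining limiting and density arguments are routine.
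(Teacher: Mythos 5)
Your proof is correct. Items (i) and (ii) follow the paper verbatim: Fubini (justified by boundedness of $K$ and finiteness of $\nu$) gives $L_K^*=L_{K^*}$, and (ii) is immediate. For (iii) the paper uses the same identity $\langle L_Kf,f\rangle_2=\int_X\int_X K(x,t)f(t)\ol{f(x)}\,\de\nu(t)\,\de\nu(x)$ and also reduces to a dense class, but it approximates $f$ by simple functions $\phi=\sum_j\gamma_j\chi_{A_j}$ and then simply \emph{asserts} that $\langle L_K\phi,\phi\rangle_2=\sum_{i,j}K(x_i,x_j)\gamma_j\ol{\gamma_i}\nu(A_i)\nu(A_j)$, which is false as an exact identity: $\int_{A_i}\int_{A_j}K(x,t)\,\de\nu(t)\,\de\nu(x)$ is not $K(x_i,x_j)\nu(A_i)\nu(A_j)$ unless one also shrinks the diameters of the $A_j$ and invokes the uniform continuity of $K$. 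Your Riemann-sum argument --- partitioning the compact metric space into Borel sets of diameter less than $\varepsilon$ (disjointify a finite cover by small balls), sampling at $x_i\in E_i$, recognising the sampled sum as the positive semidefinite form with weights $\alpha_i=f(x_i)\nu(E_i)$, and controlling the error by joint uniform continuity --- is exactly the missing step, and it is the same device the paper itself deploys carefully in the converse statement (Proposition~\ref{p:conv}). The only cosmetic difference is that you run the approximation over continuous $f$ and close up by density of $\pi(\cC_\FF(X))$ in $L^2(X,\nu)$ together with boundedness of $L_K$, whereas the paper nominally starts from simple functions; both density arguments are legitimate, but yours is the one that actually meshes with the uniform-continuity estimate. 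In short: same strategy, and your version of (iii) repairs a gap in the paper's own write-up.
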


\begin{proof} (i) Fubini's Theorem is applicable and, for any $f,g\in L^2_\FF(X,\nu)$ we have
\begin{align*}\langle L_Kf,g\rangle_2 & = \int_X\bigl(\int_X K(x,y)f(y)\de\nu(y)\bigr)\ol{g(x)}\de\nu(x)\\ 
& = \int_X \int_X K(x,y)f(y)\ol{g(x)}\de\nu(y)\de\nu(x) \\
& = \int_X\int_X f(y) \ol{\ol{K(x,y)}g(x)}\de\nu(y)\de\nu(x) \\
& = \int_X \int_X f(y) \ol{K^*(y,x)g(x)}\de\nu(x)\de\nu(y) = \langle f,L_{K^*}g\rangle_2.
\end{align*}

(ii) Consequence of (i).

(iii)  We have to show that, for any $f\in\cL^2_\FF(X,\nu)$ we have
\begin{equation}\label{e:lekaf}
\langle L_Kf,f\rangle_2=\int_X\int_X K(x,t)f(t)\ol{f(x)}\de\nu(t)\de\nu(x)\geq 0.
\end{equation}
Since $f\in \cL_\FF^2(X,\nu)$, it can be approximated with respect to $\|\cdot\|_{L^2}$ by bounded functions
in $\cL_\FF^2(X,\nu)$, hence, without loss of generality, it is sufficient to prove \eqref{e:lekaf} with the
extra assumption that $f$ is bounded. In this case, we use Remark~\ref{r:riemann} and the observation that,
for any measurable partition $\{A_i\}_{i\in\NN}$ of $X$ and any choice of points $x_i\in A_i$, $i=1,\ldots,n$,
the corresponding Riemann sums 
\begin{equation*}
\sum_{i,j=1}^n K(x_i,x_j) f(x_j)\ol{f(x_i)} \nu(A_i)\nu(A_j)= \sum_{i,j=1} K(x_i,x_j)\alpha_j \ol{\alpha_i}\geq 0
\end{equation*}
 where
$\alpha_i=\nu(A_i)f(x_i)$, for all $i=1,\ldots,n$.
\end{proof}

Given $K\in\cK_\FF(X)$, the \emph{support} of $K$, denoted by $\supp(K)$, 
is the collection of all $x\in X$ subject 
to the condition that there exists $y\in X$ such that either $K(x,y)\neq 0$ or $K(y,x)\neq 0$. Also, by 
Lemma~\ref{l:schwarz}, if $K$ is Hermitian/symmetric and positive semidefinite then, for a fixed $x\in X$ we 
have $K(x,x)=0$ if and only if $K(x,y)=0$ and $K(y,x)=0$ for all $y\in X$, hence 
$\supp(K)$ consists in all those $x\in X$ such that $K(x,x)\neq 0$.

The next proposition provides a converse to the statement from
item (iii) in the previous proposition, that is, the 
positivity of the integral operator $L_K$ implies that the kernel $K$ is positive semidefinite, if the measure $\nu$ 
contains sufficient information about the kernel $K$, mathematically illustrated by the fact that the support of 
$\nu$ contains the support of $K$.

\begin{proposition}\label{p:conv} Given $K\in \cK_\FF^{\mathrm{H}}\cF(X)$, 
assume that the finite Borel measure $\nu$ on the compact metric 
space $X$ has the property that $\supp(\nu)\supseteq \supp(K)$. If $L_K\geq 0$ then $K$ is positive semidefinite.
\end{proposition}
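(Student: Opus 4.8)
The plan is to run a converse of the simple-function computation used in the proof of Proposition~\ref{p:lekad}(iii): I would test the positivity of $L_K$ against normalised indicator functions of small balls centred at the prescribed points, and then let the radii shrink to zero. The standing assumption that $K$ is continuous (as in Proposition~\ref{p:leka} and Proposition~\ref{p:lekad}) is what makes the limit identifiable.

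First I would reduce to distinct points lying in $\supp(K)$. Given $n\in\NN$, $x_1,\ldots,x_n\in X$ and $\alpha_1,\ldots,\alpha_n\in\FF$, the goal is $\sum_{i,j=1}^n\ol{\alpha_i}\alpha_j K(x_i,x_j)\geq 0$. If some $x_i\notin\supp(K)$, then by the description of $\supp(K)$ recalled just before the statement we have $K(x_i,y)=0$ and $K(y,x_i)=0$ for every $y\in X$, so every term of the double sum carrying the index $i$ vanishes and that point can be discarded. After combining the coefficients of any repeated points, I may assume the surviving $x_1,\ldots,x_m$ are pairwise distinct and all belong to $\supp(K)\subseteq\supp(\nu)$.

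For small $\varepsilon>0$ I would pick pairwise disjoint open balls $B_i:=B(x_i,\varepsilon)$; since each $x_i\in\supp(\nu)$, definition \eqref{e:supen} gives $\nu(B_i)>0$, so the test function
\[
f_\varepsilon:=\sum_{i=1}^m \frac{\alpha_i}{\nu(B_i)}\,\chi_{B_i}\in L^2(X,\nu)
\]
is well defined. Expanding exactly as in Proposition~\ref{p:lekad}(iii) yields
\[
\langle L_Kf_\varepsilon,f_\varepsilon\rangle_2
=\sum_{i,j=1}^m \ol{\alpha_i}\alpha_j\,
\frac{1}{\nu(B_i)\nu(B_j)}\int_{B_i}\int_{B_j}K(x,t)\,\de\nu(t)\,\de\nu(x),
\]
and the hypothesis $L_K\geq 0$ forces the left-hand side to be $\geq 0$ for every $\varepsilon$.

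The crux is then the limit $\varepsilon\to 0$. Because $K$ is continuous on the compact space $X\times X$, it is uniformly continuous, so for $(x,t)\in B_i\times B_j$ the value $K(x,t)$ stays within the modulus of continuity of $K(x_i,x_j)$; hence each weighted double average converges to $K(x_i,x_j)$, giving $\langle L_Kf_\varepsilon,f_\varepsilon\rangle_2\to\sum_{i,j=1}^m\ol{\alpha_i}\alpha_j K(x_i,x_j)$, which is therefore $\geq 0$. I expect the main obstacle to be precisely this averaging step: one must guarantee that the normalisation by $\nu(B_i)$ is legitimate — this is exactly where $\supp(\nu)\supseteq\supp(K)$ is used, ensuring $\nu(B_i)>0$ — and that uniform continuity controls the double averages simultaneously for all pairs $(i,j)$, not just for a single fixed pair. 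The reduction discarding points outside $\supp(K)$ is the conceptual point explaining why only the containment of supports, rather than the stronger condition $\supp(\nu)=X$, is needed.
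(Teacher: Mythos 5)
Your proof is correct and follows essentially the same route as the paper: reduce to distinct points lying in $\supp(K)$, test $L_K\geq 0$ against normalised indicator functions of small pairwise disjoint neighbourhoods of the $x_i$ (which have positive measure because $\supp(K)\subseteq\supp(\nu)$), and pass to the limit using the uniform continuity of $K$ on the compact space $X\times X$. The only cosmetic difference is that you dispose of repeated points at the outset by merging their coefficients, whereas the paper defers this to a principal-minors argument at the end; your variant is, if anything, the cleaner of the two.
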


\begin{proof} Let $x_1,\ldots,x_n\in X$ and $\alpha_1,\ldots,\alpha_n\in\FF$ be arbitrary. We want to prove that
\begin{equation}\label{e:sijen}
\sum_{i,j=1}^n K(x_i,x_j)\alpha_j\ol{\alpha_i}\geq 0.
\end{equation}
To this end, we first observe that, without loss of generality, we can assume that $x_i\in\supp(K)$ for all 
$i=1,\ldots, n$. Indeed, if $x_l\not\in\supp(K)$ then $K(x_l,x_j)=K(x_j,x_l)=0$ for all $j=1,\ldots,n$ and hence 
the double sum in \eqref{e:sijen} can exclude $i=l$ and $j=l$ without changing anything.

Also, for the moment we assume that the points $x_1,\ldots,x_n$ are distinct.
Let $\epsilon>0$ be arbitrary. Since, for each $i=1,\ldots,n$, in view 
of the continuity of $K$, there exists $U_i$ an open neighbourhood of $x_i$ such that, for all $i,j=1,\ldots,n$, we 
have
\begin{equation}\label{e:kaxyd}
|K(x,y)-K(x_i,x_j)|<\epsilon,\quad x\in U_i,\ y\in U_j.
\end{equation}
Because the points $x_1,\ldots,x_n$ are distinct, we can find the collection of open sets $\{U_i\mid i=1,\ldots,n\}$ 
mutually disjoint.
Since, for each $i=1,\ldots,n$ we have $x_i\in\supp(K)\subseteq\supp(\nu)$ it follows that $\nu(U_i)=r_i>0$ and 
then, from \eqref{e:kaxyd}, for all $i,j=1,\ldots,n$, we get
\begin{equation}\label{e:intui}
\int_{U_i}\int_{U_j} |K(x,y)-K(x_i,x_j)|\de\nu(x)\de\nu(y)<\epsilon r_i r_j.
\end{equation}
Let
\begin{equation*}
g:=\sum_{i=1}^n \frac{\alpha_i}{r_i}\chi_{U_i}\in L^2_\FF(X,\nu).
\end{equation*}
Then,
\begin{align*}
\bigl| \langle L_Kg,g\rangle_2 & -\sum_{i,j=1}^n \ol{\alpha_i}\alpha_jK(x_i,x_j)\bigr| \\
 & = 
\bigl| \int_X\int_X \sum_{i,j=1}^n \frac{\ol{\alpha_i}\alpha_j}{r_i r_j} K(x,t)\chi_{U_j}(t)\chi_{U_i}(x)\de\nu(t)\de\nu(x)
\!-\! \!\sum_{i,j=1}^n \ol{\alpha_i}\alpha_j K(x_i,x_j)\bigr| \\
\intertext{which, in view of the fact that the neighbuourhood $U_i$ are mutually disjoint, equals}
& = \bigl| \sum_{i,j=1}^n  \frac{\ol{\alpha_i}\alpha_j}{r_i r_j} \int_{U_i}\int_{U_j} \bigl(K(x,t)-K(x_i,x_j)\bigr)\de\nu(t)\de\nu(x)\bigr| \\
& \leq \sum_{i,j=1}^n  \frac{|\alpha_i\alpha_j|}{r_i r_j} \int_{U_i}\int_{U_j} \bigl|K(x,t)-K(x_i,x_j) \bigr|\de\nu(t)\de\nu(t) \\
\intertext{which, in view of \eqref{e:intui}, is dominated by}
& \leq \epsilon \sum_{i,j=1}^n  |\alpha_i\alpha_j|.
\end{align*}
Since $\epsilon>0$ is arbitrary and $L_K\geq 0$, the inequality \eqref{e:sijen} is proven.

Finally, we have to deal with the case when the points $x_1,\ldots,x_n$ may not be distinct. In this case, 
if the set $x_1,\ldots,x_n$ may not consist of distinct points, the matrix $[K(x_i,x_j)]_{i,j=1}^n$ will have some 
identical columns and the corresponding identical rows. But $[K(x_i,x_j)]_{i,j=1}^n\geq 0$ if and only if all square 
principal determinants are nonnegative and we observe that, actually, 
only those principal determinants that do not contain identical rows or/and identical columns matter, 
and these correspond exactly to the case when the underlying set of points $\{x_{k_1},\ldots,x_{k_m}\}$ 
are distinct.
\end{proof}

\subsection{Mercer's Kernels}

\begin{definition}\label{d:mercer} Given a compact metric space $X$, a kernel $K\colon X\times X\ra\FF$ 
is called a \emph{Mercer kernel} if it is continuous, Hermitian/symmetric, and positive semidefinite.
\end{definition}

\begin{proposition} Let $\nu$ be a finite Borel measure with $\supp(\nu)=X$. Then, the mapping 
$K\mapsto L_K$, when $K$ runs through the set of all Mercer kernels on $X$, is one-to-one.
\end{proposition}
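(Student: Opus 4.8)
The plan is to reduce the injectivity statement to the claim that a continuous kernel whose integral operator vanishes must itself be identically zero. Given two Mercer kernels $K$ and $L$ with $L_K=L_L$, set $M:=K-L$. The class of continuous kernels is a vector space, so $M$ is again a continuous kernel and Proposition~\ref{p:leka} applies to it; moreover, by the linearity of the integral in \eqref{e:leka} we have $L_M=L_K-L_L=0$. Thus it suffices to prove that $L_M=0$ forces $M=0$.

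First I would upgrade the almost-everywhere information carried by $L_M=0$ to genuine pointwise information, and this is where the hypothesis $\supp(\nu)=X$ enters decisively. For any $f\in L^2(X,\nu)$, the function $L_Mf$ is continuous on $X$ by Proposition~\ref{p:leka}(ii), and since $L_M=0$ it vanishes $\nu$-a.e. The key elementary observation is that a continuous function on $X$ which vanishes $\nu$-a.e. must vanish identically when $\supp(\nu)=X$: were it nonzero at a point $x_0$, continuity would keep it nonzero on an open neighbourhood $U$ of $x_0$, and $\nu(U)>0$ by the definition of the support in \eqref{e:supen}, contradicting the $\nu$-a.e. vanishing. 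Hence $(L_Mf)(x)=0$ for every $x\in X$ and every $f\in L^2(X,\nu)$.

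Next I would extract the kernel itself. Fix $x\in X$. The map $t\mapsto M(x,t)$ is continuous, hence lies in $L^2(X,\nu)$, and I may feed $f(t):=\ol{M(x,t)}$ into the identity just obtained to get
\[
0=(L_Mf)(x)=\int_X M(x,t)\ol{M(x,t)}\,\de\nu(t)=\int_X |M(x,t)|^2\,\de\nu(t).
\]
Therefore $t\mapsto M(x,t)$ vanishes $\nu$-a.e., and invoking the same support argument once more, now in the variable $t$ and using the continuity of $M(x,\cdot)$, I conclude $M(x,t)=0$ for all $t\in X$. Since $x\in X$ was arbitrary, $M\equiv 0$, that is, $K=L$.

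The argument is short and I do not anticipate a genuine obstacle; the only point requiring care is the passage from $\nu$-a.e. vanishing to pointwise vanishing, which fails without $\supp(\nu)=X$ and which I would isolate as the support lemma above (it is applied twice, once to $L_Mf$ and once to $M(x,\cdot)$). It is worth emphasising that continuity of the kernels is what makes the whole scheme run, since it is precisely what forces $L_Mf$ and $M(x,\cdot)$ to be continuous; neither the Hermitian/symmetric property nor positive semidefiniteness is actually used in this particular injectivity statement, so the conclusion in fact holds for the larger class of all continuous kernels.
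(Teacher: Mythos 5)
Your argument is correct, and it takes a genuinely different route from the paper. The paper's proof is a two-line appeal to Remark~\ref{r:hs}: the correspondence $K\mapsto L_K$ is an isometry of $L^2(X\times X,\nu\times\nu)$ onto the Hilbert--Schmidt ideal $\cB_2(L^2(X,\nu))$, so $L_K=L_L$ forces $K=L$ as elements of $L^2(X\times X,\nu\times\nu)$, i.e.\ $\nu\times\nu$-a.e.; the hypothesis $\supp(\nu)=X$ then embeds $\cC_\FF(X\times X)$ into $L^2(X\times X,\nu\times\nu)$ and upgrades the a.e.\ identity to a pointwise one. That route is short but leans on the Hilbert--Schmidt machinery of Remark~\ref{r:hs} (in particular the isometry of \eqref{e:hsmap}) and tacitly on the fact that $\supp(\nu)=X$ implies $\supp(\nu\times\nu)=X\times X$, so that a continuous function on $X\times X$ vanishing $\nu\times\nu$-a.e.\ vanishes identically. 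Your proof trades all of that for a single well-chosen test function: after reducing to $L_M=0$ for the continuous kernel $M=K-L$, you fix $x$, feed $f=\ol{M(x,\cdot)}$ into the integral operator, and read off $\int_X|M(x,t)|^2\,\de\nu(t)=0$, after which the support hypothesis is used only in the single variable $t$ (and once more for $L_Mf$). This is more elementary and self-contained, it isolates exactly where $\supp(\nu)=X$ is needed, and, as you rightly observe, it proves the stronger statement that $K\mapsto L_K$ is injective on all continuous kernels, with neither Hermitian symmetry nor positive semidefiniteness playing any role. What the paper's approach buys instead is uniformity with the surrounding narrative: Remark~\ref{r:hs} is already in place for the later development, and the one-line deduction makes the proposition a corollary of the $L^2$ theory rather than a separate computation. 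Both proofs are sound; yours could even be stated as a standalone lemma about continuous kernels with a.e.\ full support.
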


\begin{proof} We use Remark~\ref{r:hs} and the fact that, since $\supp(\nu)=X$, $\cC_\FF(X\times X)$ is
embedded into $L^2_\FF(X\times X,\nu\times \nu)$.
\end{proof}

If $K$ is a Mercer kernel, since it is Hermitian/symmetric and positive semidefinite, its reproducing kernel 
Hilbert space $\cH_K$ exists and is unique, by Theorem~\ref{t:moore} and Proposition~\ref{p:unic}.

\begin{proposition}\label{p:mercec}
If $K$ is a Mercer kernel on the compact metric space $X$ and $\cH_K$ is its reproducing kernel Hilbert space, 
then $\cH_K\subseteq \cC_\FF(X)$ and, letting $I_K\colon \cH_K\hookrightarrow \cC_\FF(X)$ denote the 
inclusion operator, we have $\|I_K\|\leq \sup_{x\in X}K(x,x)^{1/2}$, hence the inclusion of $\cH_K$ into 
$\cC_\FF(X)$ is continuous, equivalently, convergence in $\cH_K$ implies uniform convergence.
\end{proposition}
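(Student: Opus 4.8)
The plan is to reduce everything to the reproducing property together with the continuity of $K$, by studying the behaviour of the feature map $x\mapsto K_x$ inside $\cH_K$. The only structural inputs I would use are the two identities from Proposition~\ref{p:rkhs}(a): for all $a,b\in X$ one has $\langle K_a,K_b\rangle_{\cH_K}=K(b,a)$, and in particular $\|K_x\|_{\cH_K}^2=K(x,x)$.

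The key step is to show that the map $X\ni x\mapsto K_x\in\cH_K$ is continuous. For $x,y\in X$, expanding the squared norm and using the identities above gives
\begin{equation*}
\|K_x-K_y\|_{\cH_K}^2=K(x,x)-K(y,x)-K(x,y)+K(y,y).
\end{equation*}
Since $K$ is continuous on $X\times X$, as $y\ra x$ each of the four terms on the right tends to $K(x,x)$, so the right-hand side tends to $0$; hence $x\mapsto K_x$ is continuous. I expect this to be the main (and essentially only) point of the argument. Continuity of each $f\in\cH_K$ then follows at once: by the reproducing property $f(x)=\langle f,K_x\rangle_{\cH_K}$, which is the composition of the continuous map $x\mapsto K_x$ with the continuous linear functional $\langle f,\cdot\rangle_{\cH_K}$. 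Thus $f\in\cC_\FF(X)$ for every $f\in\cH_K$, that is, $\cH_K\subseteq\cC_\FF(X)$.

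Finally, for the norm bound I would apply the Cauchy--Schwarz inequality to the reproducing formula: for all $x\in X$,
\begin{equation*}
|f(x)|=|\langle f,K_x\rangle_{\cH_K}|\leq \|f\|_{\cH_K}\,\|K_x\|_{\cH_K}=\|f\|_{\cH_K}\,K(x,x)^{1/2}.
\end{equation*}
Taking the supremum over $x\in X$ yields $\|f\|_\infty\leq\bigl(\sup_{x\in X}K(x,x)^{1/2}\bigr)\|f\|_{\cH_K}$, and the supremum is finite because $x\mapsto K(x,x)$ is continuous on the compact space $X$. This gives $\|I_K\|\leq\sup_{x\in X}K(x,x)^{1/2}$ and shows that the inclusion $I_K\colon\cH_K\hookrightarrow\cC_\FF(X)$ is continuous, so convergence in $\cH_K$ forces uniform convergence.
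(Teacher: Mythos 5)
Your proposal is correct and follows essentially the same route as the paper: both arguments reduce continuity of each $f\in\cH_K$ to the estimate $|f(x)-f(y)|\leq\|f\|_{\cH_K}\|K_x-K_y\|_{\cH_K}$ together with the expansion $\|K_x-K_y\|_{\cH_K}^2=K(x,x)-K(x,y)-K(y,x)+K(y,y)$ and the continuity of $K$, and both obtain the norm bound from Cauchy--Schwarz applied to the reproducing formula. Your phrasing via continuity of the feature map $x\mapsto K_x$ is just a mild repackaging of the paper's direct estimate.
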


\begin{proof} If $x_1,x_2\in X$ are arbitrary and $f\in \cH_K$, then
\begin{align*}
|f(x_1)-f(x_2)|^2 & =|\langle f,K_{x_1}-K_{x_2}\rangle_{\cH_K}|^2 \leq 
\|f\|_{\cH_K}^2 \|K_{x_1}-K_{x_2}\|_{\cH_K}^2 \\
& = \|f\|^2_{\cH_K} \langle K_{x_1}-K_{x_2},K_{x_1}-K_{x_2}\rangle_{\cH_K} \\
& = \|f\|^2_{\cH_K} \bigl(K(x_1,x_1)-2\Re K(x_1,x_2)+K(x_2,x_2)\bigr),
\end{align*}
hence, the continuity of the kernel $K$ implies that $f$ is a continuous function on $X$.

Similarly, for arbitrary $f\in \cH_K$ and $x\in X$ we have
\begin{equation*}
|f(x)|=|\langle f,K_x\rangle_{\cH_K}|\leq \|f\|_{\cH_K} \|K_x\|_{\cH_K}=\|f\|_{\cH_K} K(x,x)^{1/2},
\end{equation*}
hence
\begin{equation*}
\|I_Kf\|_\infty =\sup_{x\in X}|f(x)| \leq \|f\|_{\cH_K} \sup_{x\in X}K(x,x)^{1/2},
\end{equation*}
which proves that $I_K$ is bounded and that $\|I_K\|\leq \sup_{x\in X}K(x,x)^{1/2}$.
\end{proof}

\begin{proposition}\label{p:sepa} 
If $K$ is a Mercer kernel on a compact metric space $X$ then its reproducing kernel Hilbert 
space $\cH_K$ is separable.
\end{proposition}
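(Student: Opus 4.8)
The plan is to exploit two facts: that a compact metric space is separable, and that the feature map $x\mapsto K_x$ is continuous from $X$ into $\cH_K$. Combining these with the totality of $\{K_x\mid x\in X\}$ established in Proposition~\ref{p:rkhs}(c) will produce a countable dense subset of $\cH_K$, with no need to invoke a measure $\nu$ at all.

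First I would establish the continuity of the map $X\ni x\mapsto K_x\in\cH_K$. Using Proposition~\ref{p:rkhs}(a) one computes, exactly as in the proof of Proposition~\ref{p:mercec}, that for all $x,x'\in X$
\begin{equation*}
\|K_x-K_{x'}\|_{\cH_K}^2=K(x,x)-2\Re K(x,x')+K(x',x').
\end{equation*}
Since $K$ is continuous on the compact space $X\times X$, letting $x'\to x$ forces the right-hand side to tend to $2K(x,x)-2K(x,x)=0$, so $x\mapsto K_x$ is continuous.

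Next, let $D=\{x_n\mid n\in\NN\}$ be a countable dense subset of the compact metric space $X$, which exists because compact metric spaces are separable. By the continuity just proved, every $K_x$ with $x\in X$ is a norm-limit of vectors drawn from $\{K_{x_n}\mid n\in\NN\}$; hence $\{K_{x_n}\mid n\in\NN\}$ has the same closed linear span in $\cH_K$ as $\{K_x\mid x\in X\}$, and by Proposition~\ref{p:rkhs}(c) the latter span is all of $\cH_K$. Therefore $\Span\{K_{x_n}\mid n\in\NN\}$ is dense in $\cH_K$.

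Finally, taking finite linear combinations of the $K_{x_n}$ with coefficients in $\QQ$ (respectively in $\QQ+\iac\QQ$ when $\FF=\CC$) yields a countable set that is dense in $\Span\{K_{x_n}\mid n\in\NN\}$, hence dense in $\cH_K$. This exhibits a countable dense subset of $\cH_K$, proving separability. I do not anticipate a genuine obstacle here; the only point requiring care is the continuity of the feature map, which reduces cleanly to the joint continuity of $K$ via the displayed identity above.
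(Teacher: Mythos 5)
Your proof is correct and follows essentially the same route as the paper's: separability of the compact metric space $X$, continuity of $x\mapsto K_x$ via the identity $\|K_x-K_{x'}\|_{\cH_K}^2=K(x,x)-2\Re K(x,x')+K(x',x')$, totality of $\{K_x\mid x\in X\}$, and rational linear combinations over a countable dense subset of $X$. No gaps.
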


\begin{proof} Since $X$ is a compact metric space, it is separable. More precisely, for each $n\in \NN$ 
consider the open cover $\{B_{1/n}(x)\mid x\in X\}$ of $X$ from which we extract a finite cover 
$\{B_{1/n}(x_{n,j})\mid j=1,\ldots,m_n\}$ and note that the set
\begin{equation*}
D:=\bigcup_{n\in\NN} \{x_{n,1},\ldots, x_{n,m_n}\}
\end{equation*}
is countable and dense in $X$.

Then, we can prove that for any $x\in X$, letting $(x_n)_n$ be a sequence from $D$ that converges to $x$, it follows 
that $K_{x_n}\ra K_x$ as $n\ra\infty$ with respect to the norm $\|\cdot\|_{\cH_K}$. Indeed, in view of the 
continuity of $K$, we have
\begin{align*}
\|K_x-K_{x_n}\|^2_{\cH_K} & = \langle K_x-K_{x_n},K_x-K_{x_n}\rangle_{\cH_K} \\
& = K(x,x)-K(x,x_n)-K(x_n,x)+K(x_n,x_n)\ra 0\mbox{ as }n\ra\infty,
\end{align*}
From here it follows that the countable set $\FF_\QQ\{K_{x}\mid x\in D\}$, where $\FF_\QQ=\QQ$ if $\FF=\RR$ 
and $\FF_\QQ=\QQ+\iac \QQ$ if $\FF=\CC$, is countable and dense in $\cH_K$.
\end{proof}

\subsection{Reproducing Kernel Hilbert Spaces Associated to Mercer's Kernels}

We are now heading to Mercer's Theorem and the first step in this enterprise is to find a more explicit 
representation of the reproducing kernel Hilbert space $\cH_K$ in terms of the integral operator $L_K$. First we 
need to see that $\cH_K$ is dense in the support space of $L_K$.

\begin{remark}\label{r:total}
Let $g\in L^2_\FF(X,\nu)$ and $x\in X$ be arbitrary. Then, since $K$ is Hermitian we have
$\ol{K(t,x)}=K(x,t)$ for all $t\in X$ and then
\begin{equation*}
\langle g,K_x\rangle_{L^2_\FF}=\int_X g(t)\ol{K_x(t)}\de\nu(t)= \int_X g(t)\ol{K(t,x)}\de\nu(t)= \int_X K(x,t)g(t)
\de\nu(t)=(L_Kg)(x).
\end{equation*}
From here it follows that, if $g\perp K_x$ for all $x\in X$ if and only if $g\in \ker(L_K)$. This shows that 
$\{K_x\mid x\in X\}$ is total in $L^2_\FF(X,\nu)\ominus\ker(L_K)$, in particular $\cH_K$ is dense in 
$L^2_\FF(X,\nu)\ominus\ker(L_K)$.
\end{remark}

Before dealing with $\cH_K$, we show that the range of $L_K$ is a reproducing kernel Hilbert space and we
calculate its reproducing kernel. This is a preliminary technical step.

\begin{proposition}\label{p:kadoi}
Let $K$ be a Mercer kernel on the compact metric space $X$, let $\nu$ be a finite Borel 
measure on $X$ and let $L_K$ be the integral operator with symbol $K$. 
Define the kernel $K^{(2)}\colon X\times X\ra\FF$ by
\begin{equation}\label{e:kadoi}
K^{(2)}(x,y):=(L_K K_y)(x)=\int_X K(x,t)K(t,y)\de\nu(t),\ x,y\in X.
\end{equation}

\nr{1} $K^{(2)}$ is a Mercer kernel on $X$.

\nr{2} $\ran(L_K)=\cH_{K^{(2)}}$ in such a way that, when viewing 
$L_K\colon L^2_\FF(X,\nu)\ominus\ker(L_K)\ra \cH_{K^{(2)}}$, it is a unitary operator.

\nr{3} $L_K^2=L_{K^{(2)}}$, that is, $L_K^2$ is an integral operator with symbol $K^{(2)}$.

\nr{4} If $\supp(\nu)\supseteq \supp(K)$ then 
$K^{(2)}\leq \|L_K\|\, K$ and $\ran(L_K)=\cH_{K^{(2)}}\subseteq \cH_K$.
\end{proposition}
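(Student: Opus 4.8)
The plan is to handle the four items in the order (1), (3), (2), (4), the organising observation being that, since $K$ is Hermitian, the defining integral is an $L^2$-inner product of the sections $K_x$: for all $x,y\in X$,
\begin{equation*}
K^{(2)}(x,y)=\int_X K(x,t)K(t,y)\de\nu(t)=\int_X K(t,y)\ol{K(t,x)}\de\nu(t)=\langle K_y,K_x\rangle_{L^2}.
\end{equation*}
For item (1), continuity of $K^{(2)}$ follows from the uniform continuity of $K$ on the compact $X\times X$ and the finiteness of $\nu$ (the estimate is that of \eqref{e:lekax}); the Hermitian/symmetric property is a one-line computation using $\ol{K(x,t)}=K(t,x)$; and positive semidefiniteness is immediate from the displayed identity, which exhibits $\bigl(L^2(X,\nu),\,x\mapsto K_x\bigr)$ as a linearisation of $K^{(2)}$, so that Theorem~\ref{t:kolmo}, implication (2)$\Ra$(1), applies. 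Thus $K^{(2)}$ is a Mercer kernel. For item (3), $L_K^2=L_{K^{(2)}}$ is a direct application of Fubini's Theorem, legitimate because $K$ is bounded and $\nu$ finite, interchanging the order of integration in $\int_X K(x,t)\bigl(\int_X K(t,y)f(y)\de\nu(y)\bigr)\de\nu(t)$.

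Item (2) is the conceptual heart. By Remark~\ref{r:total} the family $\{K_x\mid x\in X\}$ is total in $L^2(X,\nu)\ominus\ker(L_K)$, so the identity above shows that $\bigl(L^2(X,\nu)\ominus\ker(L_K),\,x\mapsto K_x\bigr)$ is a \emph{minimal} linearisation of $K^{(2)}$. I would then feed this minimal linearisation into the construction used to prove Moore's Theorem (Theorem~\ref{t:moore}): the resulting unitary $U$ onto $\cH_{K^{(2)}}$ acts by $Ug=\langle g,K_\cdot\rangle_{L^2}$, and by Remark~\ref{r:total} this is exactly $Ug=L_Kg$. Hence $\cH_{K^{(2)}}=\ran(U)=\ran(L_K)$ and $L_K$, restricted to $L^2(X,\nu)\ominus\ker(L_K)$, is precisely the unitary $U$ onto $\cH_{K^{(2)}}$, as asserted.

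For item (4), the identity gives, for all $x_1,\dots,x_n\in X$ and scalars $\alpha_1,\dots,\alpha_n$,
\begin{equation*}
\sum_{i,j=1}^n\ol{\alpha_i}\alpha_j K^{(2)}(x_i,x_j)=\bigl\|\textstyle\sum_j\alpha_j K_{x_j}\bigr\|_{L^2}^2,\qquad
\sum_{i,j=1}^n\ol{\alpha_i}\alpha_j K(x_i,x_j)=\bigl\|\textstyle\sum_j\alpha_j K_{x_j}\bigr\|_{\cH_K}^2,
\end{equation*}
so $K^{(2)}\le\|L_K\|\,K$ reduces to $\|f\|_{L^2}^2\le\|L_K\|\,\|f\|_{\cH_K}^2$ on the dense span $\Span\{K_x\mid x\in X\}$. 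I would obtain this by comparing the two norms through the continuous inclusion $J\colon\cH_K\hookrightarrow L^2(X,\nu)$ (the map $E_X I_K$, bounded by Proposition~\ref{p:mercec}): a Bochner-integral computation with the reproducing property yields $J^*g=\int_X g(x)K_x\de\nu(x)$, hence $JJ^*=L_K$, so that
\begin{equation*}
\|f\|_{L^2}^2=\langle J^*Jf,f\rangle_{\cH_K}\le\|J\|^2\,\|f\|_{\cH_K}^2=\|JJ^*\|\,\|f\|_{\cH_K}^2=\|L_K\|\,\|f\|_{\cH_K}^2.
\end{equation*}
Once $K^{(2)}\le\|L_K\|\,K$ is established, Aronszajn's Inclusion Theorem (Theorem~\ref{t:inclusion}) gives $\cH_{K^{(2)}}\subseteq\cH_K$, and with item (2) this is $\ran(L_K)=\cH_{K^{(2)}}\subseteq\cH_K$. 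The hypothesis $\supp(\nu)\supseteq\supp(K)$ enters here: every $f\in\cH_K$ satisfies $f(x)=\langle f,K_x\rangle_{\cH_K}=0$ for $x\notin\supp(K)$ (as then $K_x=0$), so a member of $\cH_K$ that is null $\nu$-a.e. vanishes on $\supp(\nu)\supseteq\supp(K)$ and hence identically; thus $J$ is injective and the inclusion of Hilbert spaces is genuinely an inclusion of these spaces of functions.

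I expect item (2) to be the main obstacle, or rather the key insight: recognising $K^{(2)}$ as the Gram kernel of the feature map $x\mapsto K_x$ into $L^2(X,\nu)$, so that Moore's construction converts that feature map into $L_K$ itself. The remaining delicate bookkeeping is in item (4) --- verifying $JJ^*=L_K$ (the Bochner integral together with the two distinct reproducing identities, one in $\cH_K$ and the one of Remark~\ref{r:total} giving $\langle g,K_x\rangle_{L^2}=(L_Kg)(x)$) and locating precisely where $\supp(\nu)\supseteq\supp(K)$ is used.
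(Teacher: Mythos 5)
Your proposal is correct, and items (1)--(3) follow the paper's line: (1) is the paper's positivity computation repackaged through the identity $K^{(2)}(x,y)=\langle K_y,K_x\rangle_{L^2}$ and Kolmogorov's Theorem, (3) is the same Fubini argument, and (2) is the paper's argument in disguise --- the paper builds $\cR=\ran(L_K)$ with the transported inner product and verifies the reproducing property by hand, whereas you recognise $(L^2(X,\nu)\ominus\ker(L_K),\,x\mapsto K_x)$ as a minimal linearisation and let Moore's construction (which sends $g\mapsto\langle g,K_\cdot\rangle_{L^2}=L_Kg$ by Remark~\ref{r:total}) do that verification for you; this is a genuinely cleaner packaging of the same content. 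The real divergence is in item (4). The paper reduces WLOG to $X=\supp(\nu)$, proves the \emph{operator} inequality $L_{K^{(2)}}=L_K^2=L_K^{1/2}L_KL_K^{1/2}\leq\|L_K\|\,L_K$, and then invokes Proposition~\ref{p:conv} to convert it back into the \emph{kernel} inequality $K^{(2)}\leq\|L_K\|\,K$ --- this is where the support hypothesis is genuinely consumed. You instead prove $K^{(2)}\leq\|L_K\|\,K$ directly from the factorisation $L_K=JJ^*$ through $\cH_K$, which yields $\|f\|_{L^2}^2=\langle J^*Jf,f\rangle_{\cH_K}\leq\|JJ^*\|\,\|f\|_{\cH_K}^2$ on $\Span\{K_x\}$. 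This is not circular even though the paper only establishes $L_K=J_KJ_K^*$ later (Theorem~\ref{t:leka}(ii), whose proof uses the present proposition), because your Bochner-integral derivation of $J^*g=\int_X g(x)K_x\,\de\nu(x)$ is self-contained; just note that it needs the continuity of $x\mapsto K_x$ into $\cH_K$ and the separability of $\cH_K$, both supplied by Proposition~\ref{p:sepa}, and that evaluation functionals commute with the Bochner integral because they are continuous on $\cH_K$. What your route buys is twofold: it makes the key factorisation available earlier, and it shows that the inequality $K^{(2)}\leq\|L_K\|\,K$ does not actually require $\supp(\nu)\supseteq\supp(K)$ --- that hypothesis is only needed, as you correctly locate, to make the inclusion $\cH_K\hookrightarrow L^2(X,\nu)$ injective so that the various spaces of functions are honestly identified. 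What the paper's route buys is economy of tools: it reuses Proposition~\ref{p:conv} and the elementary operator inequality rather than introducing vector-valued integration.
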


\begin{proof} (1) As in Proposition~\ref{p:leka} it follows that $K^{(2)}$ is continuous and since 
$K$ is Hermitian/symmetric it
follows easily that $K^{(2)}$ is Hermitian/symmetric as well. We prove that $K^{(2)}$ is positive semidefinite 
as well. To see this, let $x_1,\ldots,x_n\in X$ and $\alpha_1,\ldots,\alpha_n\in\FF$ be arbitrary. Then
\begin{align*}
\sum_{i,j=1}^n \ol{\alpha_i}\alpha_j K^{(2)}(x_i,x_j) & = \sum_{i,j=1}^n \ol{\alpha_i}\alpha_j \int_X K(x_i,t)K(t,x_j)\de\nu(t) \\
& = \int_X \sum_{i,j=1}^n \ol{\alpha_i}\alpha_j \ol{K(t,x_i)} K(t,x_j)\de\nu(t) \\
& = \int_X | \sum_{j=1}^n \alpha_j K(x_j,t)|^2\de\nu(t)\geq 0.
\end{align*}
This concludes the proof that $K^{(2)}$ is a Mercer kernel on $X$ as well.

(2) Let $\cR:=\ran(L_K)$ and note that $L_K|_{L^2_\FF(X,\nu)\ominus\ker(L_K)}$ is one-to-one and has the 
same range with $L_K$. On $\cR$ we define the inner product
\begin{equation*}
\langle L_Kf,L_Kg\rangle_\cR:= \langle f,g\rangle_{L^2_\FF(X,\nu)},\quad f,g\in L^2_\FF(X,\nu)\ominus \ker(L_K),
\end{equation*}
that is, we transport the Hilbert space structure from $L^2_\FF(X,\nu)\ominus \ker(L_K)$ to $\cR$, hence 
$(\cR,\langle\cdot,\cdot\rangle_\cR)$ is a Hilbert space and $L_K\colon L^2_\FF(X,\nu)\ominus\ker(L_K)\ra \cR$ 
is a unitary operator. In the following we prove that $\cR$ is a reproducing kernel Hilbert space and its 
reproducing kernel is $K^{(2)}$, that is, $\cR=\cH_{K^{(2)}}$ as Hilbert spaces.

First, we claim that all evaluation functionals on $\cR$ are continuous. Indeed, for arbitrary $x\in X$ and 
$f\in \cR$, as in \eqref{e:lekam} we have
\begin{align*}
|f(x)| &  = \bigl| (L_Kg)(x)\bigr| = \bigl| \int_X K(x,t)g(t)\de\nu(t)\bigr| \\
&  \leq \biggl( \int_X |K(x,t)|^2 \de\nu(t) 
\biggr)^{1/2} \,\|g\|_{L^2_\FF(X,\nu)} =  \biggl( \int_X |K(x,t)|^2 \de\nu(t) 
\biggr)^{1/2} \,\|f\|_{\cR}\\
& = \|K_x\|_{L^2_\FF(X,\nu)}\, \|f\|_{\cR},
\end{align*}
and the claim is proven. Let $M$ denote the reproducing kernel of $\cR$.

By Remark~\ref{r:total}, $\{K_x\mid x\in X\}$ is total in $L^2_\FF(X,\nu)\ominus \ker(L_K)$.
For arbitray $f\in \cR$ and $x\in X$, letting $g\in L^2_\FF(X,\nu)\ominus \ker(L_K)$ such that $f=L_Kg$, we have
\begin{align*}
\langle f,L_KK_x\rangle_\cR & = \langle g,K_x\rangle_{L^2_\FF(X,\nu)} = \int_X g(t) \ol{K_x(t)}\de\nu(t) \\
 & = \int_X K(x,t)g(t)\de\nu(t)=(L_Kg)(x)=f(x),
\end{align*}
hence $M_x=L_K K_x$ for all $x\in X$. Then, for any $x,y\in X$ we have
\begin{align*}
M(x,y) & = M_y(x) = ((L_K K_y)(x)) = \int_X K(x,t)K_y(t)\de\nu(t)\\ & = \int_X K(x,t)K(t,y)\de\nu(t) = K^{(2)}(x,y),
\end{align*}
 which proves that $K^{(2)}$ is the reproducing kernel of the reproducing kernel Hilbert space $\cR$, 
 hence $\ran(L_K)=\cR=\cH_K$ and, when viewing 
$L_K\colon L^2_\FF(X,\nu)\ominus\ker(L_K)\ra \cH_{K^{(2)}}$, it is a unitary operator.

(3) Indeed, to see this, for arbitrary $f\in L^2_\FF(X,\nu)$ we have
\begin{align*}
(L_K^2f)(x) & = (L_K L_Kf)(x) = \int_X K(x,t)(L_Kf)(t)\de\nu(t)\\
&  = \int_X K(x,t)\int_X K(t,s)f(s)\de\nu(s)\de\nu(t)
= \int_X f(s)\bigl(\int_X K(x,t)K(t,s)\de\nu(t)\bigr) \de\nu(s) \\ & = \int_X K^{(2)}(x,s) f(s)\de\nu(s) =(L_{K^{(2)}}f)(x),
\end{align*}
which proves the claim.

(4) Since $\supp(\nu)$ is a closed subset of the compact metric space $X$, it is a compact metric space 
and $L^2_\FF(X,\nu)=L^2_\FF(\supp(\nu),\nu)$. Since $\supp(\nu)\supseteq \supp(K)$, when considering the kernel 
$K\colon \supp(\nu)\times \supp(\nu)\ra\FF$ the corresponding reproducing kernel Hilbert space 
$\cH_K$ remains 
the same. In conclusion, by replacing $X$ with $\supp(\nu)$, without loss of generality we can assume that 
$X=\supp(\nu)$.

By (3), we have
\begin{equation*}
L_{K^{(2)}}=L_K^2=L_K^{1/2} L_KL_K^{1/2}\leq \|L_K\| L_K^{1/2}L_K^{1/2}=\|L_K\| L_K.
\end{equation*}
Since $\supp(\nu)=X$, by Proposition~\ref{p:lekad} and Proposition~\ref{p:conv}, it follows that 
$K^{(2)}\leq \|L_K\| K$ and then, by Proposition~\ref{t:inclusion} it follows that $\cH_{K^{(2)}}\subseteq \cH_K$.
\end{proof}

Next we show that $\cH_K$ is the operator range of $L_K^{1/2}$ and from here we get a more explicit 
representation of $\cH_K$. 

\begin{theorem}\label{t:leka} 
Let $K$ be a Mercer kernel on the compact metric space $X$ and let $\nu$ be a finite Borel 
measure on $X$ such that $\supp(\nu)\supseteq \supp(K)$. 

\nr{i} The reproducing kernel 
Hilbert space $\cH_K$ associated to $K$ is continuously and densely included in 
$L^2_\FF(X,\nu)\ominus \ker(L_K)$, where $L_K\colon L^2_\FF(X,\nu)\ra L^2_\FF(X,\nu)$ is the integral operator with symbol 
$K$ as in \eqref{e:leka}, hence continuously included in $L^2_\FF(X,\nu)$.

\nr{ii} $\ran(L_K)\subseteq \cH_K$ and, 
letting $J_K\colon \cH_K\ra L^2_\FF(X,\nu)$ denote the inclusion operator, when viewing 
$L_K\colon L^2_\FF(X,\nu)\ra \cH_K$, it is a bounded operator and $L_K=J_K^*$ and, when viewing $L_K\colon L^2_\FF(X,\nu)\ra L^2_\FF(X,\nu)$, we have
\begin{equation}\label{e:lekajeka}
L_K=J_KJ_K^*.
\end{equation}

\nr{iii} $\ran(L_K^{1/2})=\cH_K$ and, 
when viewing $L_K^{1/2}\colon L^2_\FF(X,\nu)\ominus \ker(L_K)\ra \cH_K$, it is a unitary operator.
\end{theorem}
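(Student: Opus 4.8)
The plan is to establish the three items in order, treating (i) as the technical heart and deriving (ii) and (iii) from it essentially formally. For (i), the first thing I would check is that the natural map $\cH_K\to L^2(X,\nu)$ is genuinely injective, which is the one place where the hypothesis $\supp(\nu)\supseteq\supp(K)$ does real work in this theorem. If $f\in\cH_K$ vanishes $\nu$-a.e., then since $f$ is continuous (Proposition~\ref{p:mercec}) it vanishes on $\supp(\nu)$, hence on $\supp(K)$; and for $x\notin\supp(K)$ one has $K_x=0$ as a function, so $f(x)=\langle f,K_x\rangle_{\cH_K}=0$ by the reproducing property. Thus $f\equiv 0$. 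Continuity of the inclusion follows by composing the bound of Proposition~\ref{p:mercec}, namely $\|f\|_\infty\le\sup_{x}K(x,x)^{1/2}\|f\|_{\cH_K}$, with $\|\cdot\|_{L^2}\le\nu(X)^{1/2}\|\cdot\|_\infty$. To see that the image lands in $L^2(X,\nu)\ominus\ker(L_K)$, I would use Remark~\ref{r:total}: for $g\in\ker(L_K)$ we have $\langle K_x,g\rangle_{L^2}=\ol{(L_Kg)(x)}=0$, so $\Span\{K_x\mid x\in X\}\subseteq L^2(X,\nu)\ominus\ker(L_K)$; since this subspace is $L^2$-closed, $\Span\{K_x\}$ is $\cH_K$-dense in $\cH_K$, and the inclusion is $L^2$-continuous, every $f\in\cH_K$ lands in $L^2(X,\nu)\ominus\ker(L_K)$. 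Density is exactly the totality statement of Remark~\ref{r:total}.

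With (i) in hand, $J_K\colon\cH_K\to L^2(X,\nu)$ is bounded, so its adjoint $J_K^*\colon L^2(X,\nu)\to\cH_K$ is a bounded operator taking values in $\cH_K$. I would identify $J_K^*$ with $L_K$ by a one-line reproducing-property computation: for $f\in L^2(X,\nu)$ and $x\in X$,
\begin{equation*}
(J_K^*f)(x)=\langle J_K^*f,K_x\rangle_{\cH_K}=\langle f,K_x\rangle_{L^2}=\int_X f(t)\,K(x,t)\,\de\nu(t)=(L_Kf)(x),
\end{equation*}
the third equality because $\ol{K_x(t)}=\ol{K(t,x)}=K(x,t)$. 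Both $J_K^*f$ and $L_Kf$ are continuous functions, so they coincide. This simultaneously yields $\ran(L_K)\subseteq\cH_K$, the boundedness of $L_K\colon L^2(X,\nu)\to\cH_K$, and the identity $L_K=J_K^*$. Reading $L_K$ back as an operator on $L^2(X,\nu)$ and noting that $J_K$ is the set-theoretic inclusion then gives $L_K=J_KJ_K^*$, which is \eqref{e:lekajeka}.

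Finally, (iii) is a direct application of the operator-range description recorded in Remark~\ref{r:opran} to the continuous inclusion $J_K\colon\cH_K\hookrightarrow L^2(X,\nu)$. Setting $A:=J_KJ_K^*$, which equals $L_K$ by (ii) and is positive by Proposition~\ref{p:lekad}, that remark yields $\ran(A^{1/2})=\ran(J_K)=\cH_K$ and that $A^{1/2}\colon L^2(X,\nu)\ominus\ker(A)\ra\cH_K$ is unitary. Since $A=L_K$ forces $A^{1/2}=L_K^{1/2}$ and $\ker(A)=\ker(L_K)$, this is precisely the assertion that $\ran(L_K^{1/2})=\cH_K$ and that $L_K^{1/2}\colon L^2(X,\nu)\ominus\ker(L_K)\ra\cH_K$ is unitary.

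I expect the main obstacle to be part (i), and within it the two points that $\cH_K$ embeds injectively and lands inside the closed subspace $L^2(X,\nu)\ominus\ker(L_K)$ rather than merely in $L^2(X,\nu)$; the injectivity is easy to overlook and is exactly where the measure-theoretic hypothesis $\supp(\nu)\supseteq\supp(K)$ is genuinely consumed. After that, (ii) is just the adjoint identity $L_K=J_K^*$ secured by the reproducing property, and (iii) is a citation of the operator-range machinery of Remark~\ref{r:opran}, so no further estimates are required. (One could alternatively route (ii) through Proposition~\ref{p:kadoi} and Aronszajn's Theorem~\ref{t:inclusion}, but the adjoint computation above is shorter and avoids that detour.)
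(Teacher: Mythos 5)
Your proof is correct, and parts (i) and (iii) follow essentially the paper's route: (i) is the composition of the bounds from Proposition~\ref{p:mercec} with $\|\cdot\|_{L^2}\le\nu(X)^{1/2}\|\cdot\|_\infty$ together with Remark~\ref{r:total}, and (iii) is the operator-range/partial-isometry machinery (the paper invokes Lemma~\ref{l:pai} directly rather than Remark~\ref{r:opran}, but that remark is itself a corollary of Lemmas~\ref{l:douglas} and~\ref{l:pai}, so this is the same argument). Two genuine differences are worth recording. First, in (i) the paper disposes of the identification issue by the reduction ``WLOG $X=\supp(\nu)$,'' whereas you prove injectivity of $\cH_K\to L^2(X,\nu)$ head-on, splitting $X$ into $\supp(\nu)$ (where a continuous $\nu$-null function vanishes) and the complement of $\supp(K)$ (where $K_x=0$ forces $f(x)=\langle f,K_x\rangle_{\cH_K}=0$); your version makes explicit exactly where the hypothesis $\supp(\nu)\supseteq\supp(K)$ is consumed, which the paper's reduction leaves implicit in the claim that $\cH_K$ ``remains the same'' upon restriction. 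Second, and more substantially, your treatment of (ii) is a genuinely shorter route: the one-line identity $(J_K^*f)(x)=\langle J_K^*f,K_x\rangle_{\cH_K}=\langle f,K_x\rangle_{L^2}=(L_Kf)(x)$ delivers $\ran(L_K)\subseteq\cH_K$, the boundedness of $L_K\colon L^2(X,\nu)\to\cH_K$, and $L_K=J_K^*$ all at once, with no input beyond (i). The paper instead verifies $\langle J_Kf,g\rangle_{L^2}=\langle f,L_Kg\rangle_{\cH_K}$ on the kernel functions and extends by density, which requires knowing in advance that $\ran(L_K)\subseteq\cH_K$ and that $L_K\colon L^2(X,\nu)\ominus\ker(L_K)\to\cH_K$ is bounded; it imports both facts from Proposition~\ref{p:kadoi} (the $K^{(2)}$ analysis) and Aronszajn's Theorem~\ref{t:inclusion}. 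What the paper's detour buys is the explicit identification $\ran(L_K)=\cH_{K^{(2)}}$ and the domination $K^{(2)}\le\|L_K\|\,K$, which have independent interest; what your adjoint computation buys is a self-contained proof of Theorem~\ref{t:leka} that does not depend on Proposition~\ref{p:kadoi} at all.
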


\begin{proof} (i) 
Since $\supp(\nu)$ is a closed subset of the compact metric space $X$, it is a compact metric space 
and $L^2_\FF(X,\nu)=L^2_\FF(\supp(\nu),\nu)$. Since $\supp(\nu)\supseteq \supp(K)$, when considering the kernel 
$K\colon \supp(\nu)\times \supp(\nu)\ra\FF$ the corresponding reproducing kernel Hilbert space 
$\cH_K$ remains 
the same. In conclusion, by replacing $X$ with $\supp(\nu)$, without loss of generality we can assume that 
$X=\supp(\nu)$. Then, the Banach space $\cC_\FF(X)$  is continuously included in $L^2_\FF(X,\nu)$ and then, 
since $\cH_K$ is continuously included in $\cC_\FF(X)$, it follows that $\cH_K$ is continuously included in 
$L^2_\FF(X,\nu)$.
 
In Remark~\ref{r:total} it is shown that $\{K_x\mid x\in X\}$ is total in 
$L^2_\FF(X,\nu)\ominus\ker(L_K)$, hence $\cH_K$ is included and dense in 
$L^2_\FF(X,\nu)\ominus\ker(L_K)$. Combining this with the fact proven before, that 
$\cH_K$ is continuously included in $L^2_\FF(X,\nu)$, it follows that $\cH_K$ is continuously included 
in $L^2_\FF(X,\nu)\ominus \ker(L_K)$.

(ii) As before, without loss of generality, we assume that $X=\supp(\nu)$. Let $f=K_x$ and $g=K_y$ for some 
$x,y\in X$. Then, on the one hand,
\begin{equation*}
\langle J_Kf,g\rangle_{L^2}=\langle f,g\rangle_{L^2}=\int_X K_x(t)\ol{K_y(t)}\de\nu(t).
\end{equation*}
On the other hand, taking into account that, by Proposition~\ref{p:kadoi}, we have $\ran(L_K)\subseteq \cH_K$, 
we have
\begin{align*}
\langle f,L_Kg\rangle_{\cH_K} & = \langle K_x,\int_X K(\cdot,t)K(t,y)\de\nu(t)\rangle_{\cH_K}=
\langle K_x,\int_X K_t(\cdot) K(t,y)\de\nu(t)\rangle_{\cH_K} \\
\intertext{which, in view of the reproducing property, equals}
& = \int_X \ol{K(t,y)} K(t,x)\de\nu(t)=\int_X K_x(t)\ol{K_y(t)}\de\nu(t),
\end{align*}
hence,
\begin{equation}\label{e:jeka}
\langle J_K f,g\rangle_{L^2} = \langle f,L_Kg\rangle_{\cH_K}.
\end{equation}
By linearity, the identity in \eqref{e:jeka} is extended for all $f,g\in \Span\{K_x\mid x\in X\}$, which is a subspace
dense in $\cH_K$ with respect to the norm $\|\cdot\|_{\cH_K}$, and dense in $L^2_\FF(X,\nu)\ominus \ker(L_K)$ with
respect to the norm $\|\cdot\|_{L^2}$, by (i). By Proposition~\ref{p:kadoi}, when viewing the operator
$L_K\colon L^2_\FF(X,\nu)\ominus\ker(L_K)\ra \cH_{K^{(2)}}$ it is unitary and $\cH_{K^{(2)}}\subseteq \cH_K$, 
hence by Theorem~\ref{t:inclusion} we have the continuous inclusion 
$\cH_{K^{(2)}}\hookrightarrow \cH_K$, which implies that 
when viewing $L_K\colon L^2_\FF(X,\nu)\ominus\ker(L_K)\ra \cH_K$ it is a bounded operator. From here we get that
the identity \eqref{e:jeka} holds for all $f\in\cH_K$ and all 
$g\in L^2_\FF(X,\nu)\ominus \ker(L_K)$. But, because $\ran(J_K)\subseteq L^2_\FF(X,\nu)\ominus \ker(L_K)$, the 
identity \eqref{e:jeka} is trivial when $g\in \ker(L_K)$, hence it holds for all $f\in \cH_K$ and all $g\in L^2_\FF(X,\nu)$.
This shows that, when considering $J_K\colon \cH_K\ra L^2_\FF(X,\nu)$ and $L_K\colon L^2_\FF(X,\nu)\ra \cH_K$, 
we have $J_K^*=L_K$. Then, since $J_K$ acts like the identity, we have $J_KJ_K^*=L_K$, when considering 
$L_K\colon L^2_\FF(X,\nu)\ra L^2_\FF(X,\nu)$.

(iii) As before, without loss of generality we can assume that 
$\supp(\nu)=X$. Also, from item (ii), $\cH_K$ is
dense in $L^2_\FF(X,\nu)\ominus \ker(L_K)$ and the integral 
operator $L_K\in\cB(L^2_\FF(X,\nu))$, which is positive and a Hilbert-Schmidt operator, see 
Proposition~\ref{p:leka}, can be factored as in \eqref{e:lekajeka}.
 In particular, its 
square root $L_K^{1/2}\in\cB(L^2_\FF(X,\nu))$ is well-defined and a compact operator as well. But, taking into 
account that $L_K^{1/2}$ can be viewed as a positive operator  
$L_K^{1/2}\colon L^2_\FF(X,\nu)\ominus \ker(L_K)\ra L^2_\FF(X,\nu)\ominus \ker(L_K)$ and, similarly, that
$J_K\colon \cH_K\hookrightarrow L^2_\FF(X,\nu)\ominus \ker(L_K)$, the factorisation \eqref{e:lekajeka} yet holds 
and then, from Lemma~\ref{l:pai}, it follows that
\begin{equation}\label{e:ranal}
\ran(L_K^{1/2})=\ran(J_K)=\cH_K
\end{equation}
and there exists a unique unitary operator $V\colon L^2_\FF(X,\nu)\ominus \ker(L_K)\ra \cH_K$ such that
\begin{equation*}
L_K^{1/2}=J_K V.
\end{equation*}
From here, taking into account that  $J_K$ acts on $\cH_K$ like identity and \eqref{e:ranal}, it follows that, 
when viewing the operator $L^{1/2}_K\colon L^2_\FF(X,\nu)\ominus \ker(L_K)\ra\cH_K$, it coincides with $V$, hence
it is a unitary operator.
\end{proof}

\subsection{Mercer's Theorem}

The last step towards Mercer's Theorem is to prove that  the series 
that recovers the kernel $K$ from an arbitrary orthonormal basis of $\cH_K$ converges absolutely and uniformly 
on $X\times X$, to be compared with 
the general  Theorem~\ref{t:on} which guarantees that the convergence of the series is pointwise only.

\begin{theorem}\label{t:ona} 
Let $K$ be a Mercer kernel on the compact metric space $X$ and let $(e_n)_n$ be an 
orthonormal basis of $\cH_K$. Then,
\begin{equation}\label{e:kexyes}
K(x,y)=\sum_{n\in \NN}e_n(x)\ol{e_n(y)},\quad x,y\in X,
\end{equation}
where the convergence of the series is absolute and uniform on $X\times X$.
\end{theorem}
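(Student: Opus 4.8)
The plan is to bootstrap from the pointwise summation formula already furnished by Theorem~\ref{t:on} to uniform and absolute convergence, with Dini's Theorem doing the essential work on the diagonal. First I would record two structural facts. By Proposition~\ref{p:sepa} the space $\cH_K$ is separable, so an orthonormal basis may indeed be indexed by $\NN$ as in the statement; and by Proposition~\ref{p:mercec} we have $\cH_K\subseteq\cC_\FF(X)$, so every $e_n$ is a continuous function on $X$. Applying Theorem~\ref{t:on}(1) then gives, for all $x,y\in X$, the pointwise identity $K(x,y)=\sum_{n\in\NN} e_n(x)\ol{e_n(y)}$; specialising to $y=x$ yields $K(x,x)=\sum_{n\in\NN}|e_n(x)|^2$.

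The key observation is that this diagonal series is a series of nonnegative continuous functions. Setting $s_N(x):=\sum_{n=1}^N |e_n(x)|^2$, each $s_N$ is continuous, the sequence $(s_N)_N$ is monotone increasing in $N$, and it converges pointwise on the compact space $X$ to the function $x\mapsto K(x,x)$, which is continuous because $K$ is a Mercer kernel. Dini's Theorem then applies and delivers that $s_N(x)\to K(x,x)$ \emph{uniformly} in $x\in X$; equivalently, the tails $\sum_{n>N}|e_n(x)|^2$ are uniformly small in $x$.

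With uniform control of the diagonal in hand, the off-diagonal behaviour follows from the Cauchy--Schwarz inequality applied to the partial sums: for $N\le M$ and all $x,y\in X$,
\begin{equation*}
\Bigl|\sum_{n=N}^M e_n(x)\ol{e_n(y)}\Bigr| \le \Bigl(\sum_{n=N}^M |e_n(x)|^2\Bigr)^{1/2}\Bigl(\sum_{n=N}^M |e_n(y)|^2\Bigr)^{1/2}.
\end{equation*}
The second factor is bounded by $\sup_{y\in X} K(y,y)^{1/2}$, finite since $y\mapsto K(y,y)$ is continuous on the compact set $X$, while the first factor is made uniformly small by the Dini step once $N$ is large. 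Hence the partial sums of $\sum_n e_n(x)\ol{e_n(y)}$ satisfy the Cauchy criterion uniformly on $X\times X$, which combined with the already-established pointwise limit gives uniform convergence to $K$ on $X\times X$. Absolute convergence is even more immediate: Cauchy--Schwarz gives $\sum_{n\in\NN} |e_n(x)|\,|e_n(y)| \le K(x,x)^{1/2}K(y,y)^{1/2}<\infty$ for every $(x,y)$. The main obstacle is simply setting up Dini's Theorem correctly---verifying continuity, monotonicity, and a continuous limit on a compact space---after which the Cauchy--Schwarz estimate transferring uniformity from the diagonal to the full product $X\times X$ is routine.
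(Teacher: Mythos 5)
Your proof is correct and follows essentially the same route as the paper: Dini's Theorem applied to the monotone diagonal series $\sum_n|e_n(x)|^2$ converging to the continuous function $K(x,x)$, followed by a Cauchy--Schwarz estimate to transfer the uniform control from the diagonal to all of $X\times X$. The only (minor) difference is that you use the elementary finite-sum Cauchy--Schwarz inequality on blocks of the series together with the uniform Cauchy criterion, whereas the paper applies Lemma~\ref{l:schwarz} to the tail kernels $S_n(x,y)=K(x,y)-\sum_{k\le n}e_k(x)\ol{e_k(y)}$, which requires first checking that each $S_n$ is a continuous positive semidefinite kernel; your variant avoids that step.
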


For this, we will need a generalised form of a classical result.

\begin{theorem}[Dini's Theorem] Let $X$ be a compact Hausdorff space and $(f_n)_n$ a sequence of functions 
$f_n\colon X\ra \RR$ subject to the following conditions.
\begin{itemize}
\item[(i)] All functions $f_n$ are continuous.
\item[(ii)] There exists a continuous function $f\colon X\ra\RR$ such that $f_n(x)\ra f(x)$, 
as $n\ra\infty$, for all $x\in X$.
\item[(iii)] The sequence $(f_n)_n$ is monotonic.
\end{itemize}
Then, $f_n\ra f$ as $n\ra\infty$ uniformly on $X$.
\end{theorem}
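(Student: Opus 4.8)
The plan is the classical compactness argument, reduced to the case of a sequence decreasing to zero. First I would normalise the monotonicity: by replacing $f_n$ with $-f_n$ and $f$ with $-f$ if necessary, I may assume that $(f_n)_n$ is monotonically \emph{increasing}. Then I set $g_n := f - f_n$ and record its three relevant properties. Each $g_n$ is continuous by conditions (i) and (ii); since $(f_n)_n$ is increasing and $f_n(x)\ra f(x)$, we have $f_n(x)\leq f(x)$ for every $n$, so $g_n\geq 0$ and $g_{n+1}\leq g_n$ pointwise; and $g_n(x)\ra 0$ as $n\ra\infty$ for each fixed $x\in X$. The theorem is then equivalent to the assertion that $\sup_{x\in X} g_n(x)\ra 0$.

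Next I would fix $\epsilon>0$ and use the pointwise convergence together with continuity to manufacture an open cover. For each $x\in X$ there is an index $N_x$ with $g_{N_x}(x)<\epsilon$, and by continuity of $g_{N_x}$ there is an open neighbourhood $U_x$ of $x$ on which $g_{N_x}<\epsilon$ throughout. The family $\{U_x\mid x\in X\}$ is then an open cover of $X$.

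The key step, where compactness of $X$ does the decisive work, is the extraction of a finite subcover $U_{x_1},\ldots,U_{x_m}$. Setting $N:=\max\{N_{x_1},\ldots,N_{x_m}\}$, the monotonicity of $(g_n)_n$ collapses the finitely many local estimates into a single global one: for any $y\in X$ and any $n\geq N$, we have $y\in U_{x_i}$ for some $i$, whence $g_n(y)\leq g_{N_{x_i}}(y)<\epsilon$, using that $n\geq N\geq N_{x_i}$ and that $(g_n)_n$ is decreasing. Combined with $g_n\geq 0$, this gives $0\leq g_n(y)<\epsilon$ for all $y\in X$ and all $n\geq N$, that is, $\|f-f_n\|_\infty\leq\epsilon$ for every $n\geq N$, which is precisely uniform convergence.

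I expect the only genuine subtlety to be the interplay between the $x$-dependent indices $N_x$ and the need for a single uniform threshold. Monotonicity is exactly what licenses replacing the finitely many thresholds $N_{x_i}$ by their maximum $N$, and compactness is exactly what makes this family finite in the first place. Both hypotheses are therefore used essentially, and at this same point; without either one the argument breaks down, so I would flag this as the crux rather than the routine verifications surrounding it.
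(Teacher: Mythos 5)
Your proof is correct and takes essentially the same route as the paper: both reduce to a nonnegative sequence of continuous functions decreasing pointwise to zero and then run the standard open-cover/finite-subcover compactness argument, with monotonicity licensing the single uniform threshold $N$. The only cosmetic difference is that the paper covers $X$ by the nested sublevel sets $U_n=\{x\in X\mid f_n(x)<\epsilon\}$ indexed by $n$ (so that the finite subcover immediately gives $X=U_N$), whereas you cover it by point-based neighbourhoods $U_x$ coming from continuity of $g_{N_x}$; the two formulations are interchangeable.
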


\begin{proof} Without loss of generality we can assume that $f_{n+1}\leq f_n$ for all $n\in\NN$, otherwise 
change $f_n$ to $-f_n$ for all $n\in\NN$, and that $f_n\ra 0$ pointwise, as $n\ra\infty$, otherwise replace $f_n$ 
by $f_n-f$ for all $n\in\NN$.

With assumptions from before, let $\epsilon>0$ be arbitrary. For each $n\in\NN$ let 
\begin{equation}\label{e:una}
U_n:=\{x\in X\mid f_n(x)<\epsilon\}.
\end{equation}
Since $f_n$ is continuous, $U_n$ is open and, since $f_n\ra 0$ pointwise, 
it follows that $\{U_n\mid n\in\NN\}$ is an open 
cover of $X$, which is compact, hence there exists a finite subcover $\{U_{k_j}\mid j=1,\ldots, m\}$ of $X$. But,
since the sequence of functions $(f_n)_n$ is nonicreasing, we have $U_n\subseteq U_{n+1}$ hence, letting
$N=\max\{k_1,\ldots,k_m\}$ it follows that $X=U_N$. In view of the definition as in \eqref{e:una} and the fact the 
sequence of functions $(f_n)_n$ is nonincreasing, this means that 
for all $n\geq N$ and all $x\in X$ we have $f_n(x)<\epsilon$. In this way, we have proven that $f_n\ra f$, as 
$n\ra\infty$, uniformly on $X$. 
\end{proof}

\begin{proof}[Proof of Theorem~\ref{t:ona}] We already know from Theorem~\ref{t:on} that the convergence in the series \eqref{e:kexyes}
is pointwise on $X\times X$. For each $n\in \NN$ consider the sequence $(S_n)_n$ of kernels on $X$
defined by the remainders of the series \eqref{e:kexyes}
\begin{equation}\label{e:sena}
S_n(x,y):=\sum_{k=n+1}^\infty e_k(x)\ol{e_k(y)},\quad x,y\in X.
\end{equation}
Since,
\begin{equation*}
S_n(x,y)=K(x,y)-\sum_{k=1}^n e_k(x)\ol{e_k(y)},\quad x,y\in X,\ n\in\NN,
\end{equation*}
and $\cH_K$ consists of continuous functions only, see Proposition~\ref{p:mercec}, 
it follows that these kernels are continuous. They are also Hermitian/symmetric and positive semidefinite
because $\cK_\FF(X)$ is a convex cone closed under pointwise convergence. 
In particular, the Schwarz Inequality, see Lemma~\ref{l:schwarz}, holds,
\begin{equation}\label{e:senax}
|S_n(x,y)|^2\leq S_n(x,x) S_n(y,y),\quad x,y\in X,\ n\in\NN.
\end{equation}

In the following we prove that the sequence of functions $f_n(x):=S_n(x,x)$, $x\in X$, $n\in\NN$, uniformly 
converges to $0$. Indeed, we verify that the assumptions of the Dini's Theorem are fulfilled: $X$ is a compact 
metric space, all functions $f_n$ are continuous, $f_{n+1}\leq f_n$ for all $n\in\NN$, and $f_n\ra 0$ as 
$n\ra\infty$, pointwise on $X$. By Dini's Theorem it follows that $f_n\ra 0$ as $n\ra\infty$ uniformly on $X$.

Finally, from \eqref{e:senax}, it follows that $S_n(x,y)\ra 0$ as $n\ra\infty$ uniformly on $X\times X$. But, in view 
of the definition of $S_n$, see \eqref{e:sena}, this means that the convergence in the series \eqref{e:kexyes}
is uniform on $X\times X$.
\end{proof}

\begin{remark}\label{r:second} Theorem~\ref{t:ona} provides a different proof of the fact that, if $\nu$ is a 
finite Borel measure on the compact metric space $X$ and $K$ is a Mercer kernel on $X$, then the integral 
operator $L_K$ is positive, see Proposition~\ref{p:lekad}. To see this, let $f\in L^2_\FF(X,\nu)$ be fixed and let 
$(e_n)_n$ be an orthonormal basis of $\cH_K$. Given $\varepsilon>0$ arbitrary, from Theorem~\ref{t:ona} there exists $N\in\NN$ such that
\begin{equation*}
|K(x,y)-\sum_{k=1}^N e_k(x)\ol{e_k(y)}|<\varepsilon,\quad x,y\in X,
\end{equation*}
hence
\begin{align*}
\bigl| \langle L_K f,f\rangle_{L^2} & 
-\int_X\int_X \sum_{k=1}^n e_k(x)\ol{e_k(y)} \ol{f(x)} f(y)\de\nu(y)\de\nu(x)\bigr| \\
& \leq  \int_X\int_X \bigl|K(x,y) -\sum_{k=1}^n e_k(x)\ol{e_k(y)} \bigr|\, \bigl|\ol{f(x)} f(y)\bigr|\de\nu(y)\de\nu(x)  \\
& \leq \varepsilon\, \biggl( \int_X |f(x)| \de\nu(x)\biggr)^2 \leq \varepsilon \,\|f\|^2_{L^2}\, \nu(X),
\end{align*}
and, consequently, recalling that $\cH_K\subseteq L^2_\FF(X,\nu)$, see Theorem~\ref{t:leka},
we get
\begin{align*}
\langle L_k f,f\rangle_{L^2} & =\lim_{n\ra\infty} \int_X\int_X \sum_{k=1}^n e_k(x)\ol{e_k(y)} \ol{f(x)} f(y)\de\nu(y)\de\nu(x) \\
& = \lim_{n\ra\infty} \sum_{k=1}^n \int_X\int_X
\ol{f(x)} e_k(x)\ol{e_k(y)}f(y)\de\nu(y)\de\nu(x) \\
& =\lim_{n\ra \infty} \sum_{k=1}^n |\langle f,e_k\rangle_{L^2}|^2\geq 0.
\end{align*}
\end{remark}

With assumptions and notation as in Theorem~\ref{t:leka}, the integral operator 
$L_K\in\cB(L^2_\FF(X,\nu))$ is positive and compact, see Proposition~\ref{p:leka} and Proposition~\ref{p:lekad}. Then, as in 
Subsection~\ref{ss:hso}, its spectrum consists in positive eigenvalues of finite 
multiplicities that accumulate at $0$, if they are infinitely many, and probably $0$.  
Then we can write all its nonzero eigenvalues
$\lambda_1\geq \lambda_2\geq \cdots \geq \lambda_n\geq \lambda_{n+1}\geq \cdots >0$, counted with 
their multiplicities, and we can find an orthonormal system of eigenfunctions $\{\phi_n\mid n\in \NN\}$ such that 
$\phi_n$ is an eigenfunction of $L_K$ corresponding to the eigenvalue $\lambda_n$, for each $n\in \NN$. In 
addition, the same Proposition~\ref{p:leka} tells us that all eigenfunctions $\phi_n$ are continuous on $X$, 
because $\phi_n(x)=\frac{1}{\lambda_n} (L_K\phi_n)(x)$, and that $L_K$ is a Hilbert-Schmidt operator and, 
consequently,
\begin{equation}\label{e:lekahs}
\|L_K\|^2_{\mathrm{HS}}=\sum_{n\in\NN} \lambda_n^2<\infty.
\end{equation}

\begin{theorem}[Mercer's Theorem]\label{t:mercer}
Let $K$ be a Mercer kernel on a compact metric space $X$, let $\nu$ be a finite Borel measure on $X$ such that 
$\supp(\nu)\supseteq \supp(K)$, and let $L_K\in\cB_2(L^2_\FF(X,\nu))$ be the integral operator defined by $K$. 
Then,
with notation as above, we have
\begin{equation}\label{e:kexysen}
K(x,y)=\sum_{n=1}^\infty \lambda_n \phi_n(x)\ol{\phi_n(y)},\quad x,y\in X,
\end{equation} 
where the series converges absolutely and uniformly on $X\times X$.
\end{theorem}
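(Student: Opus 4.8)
The plan is to reduce the whole statement to the uniform-convergence recovery result Theorem~\ref{t:ona} by exhibiting the family $\{\lambda_n^{1/2}\phi_n\}_n$ as an orthonormal basis of $\cH_K$. As in the earlier proofs, I would first use $\supp(\nu)\supseteq\supp(K)$ to replace $X$ by $\supp(\nu)$ and assume without loss of generality that $\supp(\nu)=X$; this is what guarantees that the inclusion $\cH_K\hookrightarrow L^2(X,\nu)$ is injective, so that a continuous function in $\cH_K$ is unambiguously determined by its $\nu$-a.e.\ class.

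Next I would record the spectral picture. Since $L_K$ is a positive compact operator (Proposition~\ref{p:leka} and Proposition~\ref{p:lekad}), the nonzero eigenfunctions $\{\phi_n\}_n$ form an orthonormal basis of $\ol{\ran(L_K)}=L^2(X,\nu)\ominus\ker(L_K)$; each $\phi_n$ is continuous because $\phi_n=\lambda_n^{-1}L_K\phi_n\in\ran(L_K)\subseteq\cC_\FF(X)$; and the relation $L_K\phi_n=\lambda_n\phi_n$ with $\lambda_n>0$ forces $L_K^{1/2}\phi_n=\lambda_n^{1/2}\phi_n$ by the functional calculus for the positive square root.

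The key step is the change of basis through the square root. By Theorem~\ref{t:leka}(iii), the operator $L_K^{1/2}\colon L^2(X,\nu)\ominus\ker(L_K)\ra\cH_K$ is unitary. Since $\{\phi_n\}_n$ is an orthonormal basis of its domain, the images $e_n:=L_K^{1/2}\phi_n$ form an orthonormal basis of $\cH_K$; and by the eigenfunction relation above, $e_n$ is, as an element of $\cH_K$, the continuous function $\lambda_n^{1/2}\phi_n$. Applying Theorem~\ref{t:ona} to this orthonormal basis $\{e_n\}_n$ of $\cH_K$ then yields
\begin{equation*}
K(x,y)=\sum_{n=1}^\infty e_n(x)\ol{e_n(y)}=\sum_{n=1}^\infty \lambda_n\phi_n(x)\ol{\phi_n(y)},\quad x,y\in X,
\end{equation*}
with the series converging absolutely and uniformly on $X\times X$, which is precisely \eqref{e:kexysen}.

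I do not expect a serious obstacle, since the genuine analytic work---the uniform convergence via Dini's Theorem and the operator-range description of $\cH_K$---has already been carried out in Theorem~\ref{t:ona} and Theorem~\ref{t:leka}. The only point requiring care is the clean identification of $L_K^{1/2}\phi_n\in\cH_K$ with the continuous representative $\lambda_n^{1/2}\phi_n$: one must invoke both that $\phi_n$ is continuous and that the normalisation $\supp(\nu)=X$ makes $\cH_K\hookrightarrow L^2(X,\nu)$ injective, so that the $\cH_K$-element $e_n$ and its $\nu$-a.e.\ class carry the same information and the substitution $e_n=\lambda_n^{1/2}\phi_n$ is legitimate pointwise on all of $X$.
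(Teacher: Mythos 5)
Your proposal is correct and follows exactly the paper's own argument: transport the orthonormal basis of eigenfunctions $\{\phi_n\}_n$ of $L^2(X,\nu)\ominus\ker(L_K)$ through the unitary $L_K^{1/2}$ of Theorem~\ref{t:leka}(iii) to obtain the orthonormal basis $\{\sqrt{\lambda_n}\,\phi_n\}_n$ of $\cH_K$, then invoke Theorem~\ref{t:ona}. The extra care you take in identifying $L_K^{1/2}\phi_n$ with the continuous representative $\lambda_n^{1/2}\phi_n$ is a welcome refinement but does not change the route.
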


\begin{proof} Let us observe that $\{\phi_n\mid n\in\NN\}$ is an orthonormal basis of the Hilbert space 
$L^2_\FF(X,\nu)\ominus \ker(L_K)$. Since, by Theorem~\ref{t:leka}, when viewing $L_K^{1/2}\colon L^2_\FF(X,\nu)\ominus \ker(L_K)\ra\cH_K$ it is a unitary operator, it follows that
\begin{equation*}
\{L_K^{1/2}\phi_n\mid n\in \NN\}=\{\sqrt{\lambda_n}\phi_n\mid n\in \NN\}
\end{equation*}
is an orthonormal basis of $\cH_K$. Then, by Theorem~\ref{t:ona}, it follows that the identity \eqref{e:kexysen} 
holds and the series converges absolutely and uniformly on $X\times X$.
\end{proof}

\begin{corollary} Under the assumptions and with notation as in Theorem~\ref{t:mercer}, the operator $L_K$ is trace-class and 
we have
\begin{equation*}
\tr(L_K)=\sum_{n=1}^\infty \lambda_n=\int_X K(x,x)\de\nu(x).
\end{equation*}
\end{corollary}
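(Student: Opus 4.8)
The plan is to specialise Mercer's expansion \eqref{e:kexysen} to the diagonal and then integrate. First I would set $y=x$ in \eqref{e:kexysen}, obtaining
\begin{equation*}
K(x,x)=\sum_{n=1}^\infty \lambda_n|\phi_n(x)|^2,\quad x\in X,
\end{equation*}
where the convergence is uniform on $X$, being the restriction to the diagonal of the uniform convergence on $X\times X$ granted by Theorem~\ref{t:mercer}.

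The next step is to integrate both sides over $X$ against $\nu$. Since every term $\lambda_n|\phi_n(x)|^2$ is nonnegative, the interchange of summation and integration is legitimate either by the Monotone Convergence Theorem applied to the increasing partial sums, or directly from the uniform convergence combined with the finiteness of $\nu$. Using that $\{\phi_n\mid n\in\NN\}$ is orthonormal in $L^2(X,\nu)$, so that $\int_X|\phi_n(x)|^2\de\nu(x)=\|\phi_n\|_{L^2}^2=1$, this yields
\begin{equation*}
\int_X K(x,x)\de\nu(x)=\sum_{n=1}^\infty \lambda_n\int_X|\phi_n(x)|^2\de\nu(x)=\sum_{n=1}^\infty \lambda_n.
\end{equation*}

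Because $K$ is continuous on the compact space $X$, the diagonal function $x\mapsto K(x,x)$ is bounded, and since $\nu$ is finite the left-hand side above is finite; hence $\sum_{n=1}^\infty\lambda_n<\infty$. As $L_K$ is a positive compact operator whose nonzero eigenvalues are precisely the $\lambda_n$ counted with multiplicity, summability of the $\lambda_n$ is exactly the trace-class condition, and the trace, computed against an orthonormal basis of $L^2(X,\nu)$ obtained by adjoining to $\{\phi_n\}$ an orthonormal basis of $\ker(L_K)$ (which contributes only zeros), equals $\sum_{n=1}^\infty\lambda_n$. This simultaneously gives that $L_K$ is trace-class and both claimed identities. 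The only points needing care are the justification of the term-by-term integration and the appeal to the standard fact, recorded in the appendix, that a positive operator is trace-class precisely when its eigenvalues are summable, with the trace given by that sum; neither presents a genuine obstacle.
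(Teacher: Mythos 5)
Your proposal is correct and follows essentially the same route as the paper: restrict Mercer's expansion \eqref{e:kexysen} to the diagonal, integrate term by term using the uniform convergence (or positivity of the terms), and invoke the normalisation $\int_X|\phi_n|^2\,\de\nu=1$ to obtain $\int_X K(x,x)\,\de\nu(x)=\sum_n\lambda_n$, whence trace-class. Your extra remarks on the finiteness of the left-hand side and on adjoining a basis of $\ker(L_K)$ are sound elaborations of steps the paper leaves implicit.
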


\begin{proof} Letting $x=y$ in \eqref{e:kexysen} we have
\begin{equation*}
K(x,x)=\sum_{n=1}^\infty \lambda_n |\phi_n(x)|^2,\quad x\in X,
\end{equation*}
where the series converges absolutely and uniformly on $X$, hence, integrating and changing the order of 
summation with the integral, that is allowed due to the uniform convergence of the series, we get
\begin{align*}
\int_X K(x,x)\de\nu(x) & = \int_X \sum_{n=1}^\infty \lambda_n |\phi_n(x)|^2\de\nu(x) \\
& = \sum_{n=1}^\infty \lambda_n \int_X |\phi_n(x)|^2\de\nu(x) = \sum_{n=1}^\infty \lambda_n,
\end{align*} 
where in the last equality we took into account that the functions $\phi_n$ are normalised in $L^2_\FF(X,\nu)$. In 
particular, this shows that $L_K$ is trace-class, see Subsection~\ref{ss:tco}.
\end{proof}

\begin{corollary}  Under the assumptions and with notation as in Theorem~\ref{t:mercer}, we have
\begin{equation*}
\cH_K=\{f\in L^2_\FF(X,\nu)\mid f=\sum_{n=1}^\infty f_n\phi_n,\ (f_n/\sqrt{\lambda_n})_n\in \ell^2_\FF\},
\end{equation*}
where the series converges absolutely and uniformly on $X$.
\end{corollary}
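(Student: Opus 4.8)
The plan is to use the fact, established in the proof of Theorem~\ref{t:mercer}, that $\{\sqrt{\lambda_n}\phi_n\mid n\in\NN\}$ is an orthonormal basis of $\cH_K$, together with the continuous inclusion $\cH_K\hookrightarrow L^2(X,\nu)$ from Theorem~\ref{t:leka}(i). Throughout I write $\psi_n:=\sqrt{\lambda_n}\phi_n$, so that $\{\psi_n\}_n$ is orthonormal in $\cH_K$ while $\{\phi_n\}_n$ is orthonormal in $L^2(X,\nu)\ominus\ker(L_K)$; keeping these two systems (and the two topologies) apart is the crux of the argument.

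First I would prove the inclusion $\subseteq$. Given $f\in\cH_K$, expand it in the orthonormal basis $\{\psi_n\}_n$ as $f=\sum_n c_n\psi_n$ with convergence in $\cH_K$ and $\sum_n|c_n|^2=\|f\|_{\cH_K}^2<\infty$. Since $\psi_n=\sqrt{\lambda_n}\phi_n$, setting $f_n:=c_n\sqrt{\lambda_n}$ gives $f=\sum_n f_n\phi_n$; as the inclusion $\cH_K\hookrightarrow L^2(X,\nu)$ is continuous, this series also converges in $L^2(X,\nu)$, whence by orthonormality of $\{\phi_n\}_n$ the $f_n$ are exactly the $L^2$-Fourier coefficients $\langle f,\phi_n\rangle_{L^2}$. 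Because $f_n/\sqrt{\lambda_n}=c_n$, we obtain $(f_n/\sqrt{\lambda_n})_n=(c_n)_n\in\ell^2_\FF$, which is the asserted membership.

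For the reverse inclusion I would start from $f\in L^2(X,\nu)$ with $f=\sum_n f_n\phi_n$ and $(f_n/\sqrt{\lambda_n})_n\in\ell^2_\FF$, and form $g:=\sum_n (f_n/\sqrt{\lambda_n})\psi_n$. Since the coefficients are square-summable and $\{\psi_n\}_n$ is an orthonormal basis of $\cH_K$, this series converges in $\cH_K$, so $g\in\cH_K$; unwinding $\psi_n=\sqrt{\lambda_n}\phi_n$ shows $g=\sum_n f_n\phi_n$ with convergence in $\cH_K$, hence in $L^2(X,\nu)$, so $g=f$ in $L^2(X,\nu)$ and therefore $f=g\in\cH_K$. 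Note that a function admitting such an expansion automatically lies in $\ol{\Span}\{\phi_n\}=L^2(X,\nu)\ominus\ker(L_K)$, so there is no stray $\ker(L_K)$-component to worry about.

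It remains to establish absolute and uniform convergence on $X$ of $\sum_n f_n\phi_n$ for $f\in\cH_K$. Here I would rewrite $f_n\phi_n(x)=c_n\psi_n(x)$ and apply the Cauchy--Schwarz inequality to the tail,
\begin{equation*}
\sum_{n>N}|c_n|\,|\psi_n(x)|\leq\Bigl(\sum_{n>N}|c_n|^2\Bigr)^{1/2}\Bigl(\sum_{n>N}|\psi_n(x)|^2\Bigr)^{1/2}\leq\Bigl(\sum_{n>N}|c_n|^2\Bigr)^{1/2}\,K(x,x)^{1/2},
\end{equation*}
where the identity $\sum_n|\psi_n(x)|^2=\sum_n\lambda_n|\phi_n(x)|^2=K(x,x)$ is Mercer's Theorem \eqref{e:kexysen} restricted to the diagonal. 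Since $K$ is continuous on the compact space $X$, the quantity $K(x,x)^{1/2}$ is bounded by some $M<\infty$ uniformly in $x$, so the tail is dominated by $M\bigl(\sum_{n>N}|c_n|^2\bigr)^{1/2}$, which tends to $0$ as $N\to\infty$ independently of $x$. This yields simultaneously the absolute convergence at each point and the uniform convergence on $X$. The only genuinely delicate point in the whole argument is that it is $\{\psi_n\}_n$, not $\{\phi_n\}_n$, that is orthonormal in $\cH_K$, and the matching of the two coefficient descriptions hinges precisely on the passage between $\cH_K$- and $L^2$-convergence afforded by the continuous inclusion.
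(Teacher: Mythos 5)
Your proof is correct, and its core coincides with the paper's: both arguments rest on Theorem~\ref{t:leka}(iii), i.e.\ on the fact that $L_K^{1/2}\colon L^2(X,\nu)\ominus\ker(L_K)\ra\cH_K$ is unitary. The paper phrases this by writing $f=L_K^{1/2}g$, expanding the preimage $g$ in the basis $\{\phi_n\}_n$ of $L^2(X,\nu)\ominus\ker(L_K)$ and matching Fourier coefficients, while you work directly with the transported orthonormal basis $\{\sqrt{\lambda_n}\phi_n\}_n$ of $\cH_K$; these are the same computation seen from the two ends of the unitary, and your explicit separation of the two orthonormal systems and the two topologies is a clean way to organise it. The genuine divergence is in the final step. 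The paper obtains uniform convergence of $\sum_n f_n\phi_n$ in one line from the continuity of the inclusion $\cH_K\hookrightarrow\cC_\FF(X)$ (Proposition~\ref{p:mercec}), since the series converges in $\|\cdot\|_{\cH_K}$. You instead estimate the tail by Cauchy--Schwarz against $\bigl(\sum_{n>N}|\psi_n(x)|^2\bigr)^{1/2}\leq K(x,x)^{1/2}$, using Mercer's identity on the diagonal and compactness of $X$. Your route is slightly longer but buys something the paper's one-liner does not explicitly deliver: the \emph{absolute} convergence asserted in the statement, which does not follow merely from uniform convergence of the partial sums. So your argument actually covers the full claim of the corollary more completely than the paper's.
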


\begin{proof} By \eqref{e:ranal}, given $f\in L^2_\FF(X,\nu)$, we have $f\in \cH_K$ if and only if there exists 
$g\in L^2_\FF(X\nu)\ominus \ker(L_K)$ unique such that $f=L_K^{1/2}g$. But, $\{\phi_n\mid n\in\NN\}$ is an 
orthonormal basis of $L^2_\FF(X,\nu)\ominus \ker(L_K)$, hence
\begin{equation*}
g=\sum_{n=1}^\infty g_n\phi_n,\quad \sum_{n=1}^\infty |g_n|^2=\|g\|_{L^2_\FF}^2<\infty,
\end{equation*}
hence
\begin{equation}\label{e:felak}
f=L_K^{1/2}g =\sum_{n=1}^\infty g_n\sqrt{\lambda_n}\phi_n=\sum_{n=1}^\infty f_n\phi_n,
\end{equation}
and hence, by the uniqueness of the Fourier represenation, we have $g_n=f_n/\sqrt{\lambda_n}$ for all 
$n\in \NN$ and then
\begin{equation*}
\|f_n\|_{\cH_K}^2=\|g\|_{L^2}^2=\sum_{n=1}^\infty \frac{|f_n|^2}{\lambda_n}<\infty.
\end{equation*}

Finally, the convergence of the series in the extreme right side of \eqref{e:felak} holds in $\cH_K$ but, since the 
inclusion of $\cH_K\hookrightarrow \cC_\FF(X)$ is continuous, see Proposition~\ref{p:mercec}, 
the convergence is uniform on $X$ as well.
\end{proof}

\renewcommand\theequation{A.\arabic{equation}}
\renewcommand\thesection{A}
\renewcommand\thesubsection{A.\arabic{subsection}}

\section{Appendix: Compact, Trace-Class, and Hilbert-Schmidt Operators}\label{s:app}

There are many textbooks and monographs which contain a good presentation on most of the theory of compact 
operator and the Schatten-von Neumann ideals, of which we are interested in the trace-class and 
Hilbert-Schmidt ideals. In writing this section we used \cite{BirmanSolomjak} and \cite{Conway}.

\subsection{Compact Operators.}\label{ss:co}
Given two Hilbert spaces $\cH_1$ and $\cH_2$, a linear operator $T\colon \cH_1\ra \cH_2$ is called 
\emph{compact} if the image of 
the closed unit ball $\{x\in \cH_1\mid \|x\|\leq 1\}$ of $\cH_1$ under $T$ is relatively compact in $\cH_2$, that is,
the closure of the set $\{Tx\mid x\in \cH_1,\ \|x\|\leq 1\}$, is compact in $\cH_2$. Clearly, any compact operator
is continuous.

\begin{proposition} Let $T\in \cB(\cH_1,\cH_2)$, for some Hilbert spaces $\cH_1$ and $\cH_2$. 
The following assertions are equivalent.
\begin{itemize}
\item[(i)] $T$ is compact.
\item[(ii)] The set $\{Tx\mid x\in \cH_1,\ \|x\|\leq 1\}$ is compact in $\cH_2$.
\item[(iii)] For any bounded sequence $(x_n)_n$ in $\cH_1$ there exists a subsequence $(x_{k_n})_n$ 
such that $(Tx_{k_n})_n$ converges in the norm topology of $\cH_2$.
\item[(iv)] For any sequence $(x_n)_n$ in $\cH_1$ that converges weakly to $0\in \cH_1$, 
the sequence $(\|Tx_n\|)_n$ converges to $0$.
\item[(v)] The adjoint operator $T^*\in \cB(\cH_2,\cH_1)$ is compact.
\item[(vi)] The modulus $|T|\in \cB(\cH_1)$ is compact. 
\item[(vii)] There exists a sequence $(T_n)_n$ of compact operators, $T_n\colon \cH_1\ra \cH_2$ for all 
$n\geq 1$, such that $(T_n)_n$ converges to $T$ with respect to 
the operator norm in $\cB(\cH_1,\cH_2)$, that is, $\|T_n-T\|\xrightarrow[n\ra\infty]{}0$.
\end{itemize}
\end{proposition}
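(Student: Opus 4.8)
The plan is to organise the seven conditions into two groups. The statements (i)--(iv), together with the approximation property (vii), concern $T$ directly, and I would treat them first using the metric and weak-topological structure of the Hilbert spaces; the symmetry statements (v) and (vi) I would then reduce to (iv) by elementary inner-product identities. The recurring tools are: the closed unit ball $B$ of a Hilbert space is weakly sequentially compact (reflexivity); in a metric space, relative compactness coincides with sequential relative compactness; and every bounded operator is continuous from the weak topology into the weak topology.

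I would begin with (i)$\LRa$(ii): the implication (ii)$\Ra$(i) is trivial, and for (i)$\Ra$(ii) the only point is that $T(B)$ is already closed, which follows because a norm-limit $Tx_n\ra y$ with $x_n\in B$ admits a weakly convergent subsequence $x_{n_k}\rightharpoonup x\in B$ (the ball being convex and norm-closed, hence weakly closed), whence $Tx_{n_k}\rightharpoonup Tx$ and so $y=Tx\in T(B)$. For (i)$\LRa$(iii) I would invoke that $\cH_2$ is metric, so relative compactness of $T(B)$ is equivalent to sequential relative compactness, and rescale an arbitrary bounded sequence into $B$. For (iii)$\LRa$(iv) I would argue by contradiction in one direction: if $x_n\rightharpoonup 0$ but $\|Tx_n\|\not\ra 0$, a subsequence furnished by (iii) converges in norm to some $y$ which, being also a weak limit of $Tx_{n_k}\rightharpoonup 0$, must vanish, contradicting $\|y\|\geq\epsilon$; conversely, a bounded $(x_n)$ has a weakly convergent subsequence $x_{n_k}\rightharpoonup x$, and applying (iv) to $x_{n_k}-x$ yields $Tx_{n_k}\ra Tx$ in norm. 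The property (vii) then follows from (i)$\LRa$(iii): the implication (i)$\Ra$(vii) is immediate, while for (vii)$\Ra$(i) I would run a diagonal argument, extracting for each $T_k$ a subsequence along which $T_k$ converges and estimating $\|T(x_p-x_q)\|\le 2\sup_n\|x_n\|\,\|T-T_k\|+\|T_k(x_p-x_q)\|$ to see that the diagonal subsequence is $T$-Cauchy, hence convergent by completeness of $\cH_2$.

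For the symmetry statements I would proceed as follows. To obtain (i)$\Ra$(v), assume $T$ compact and $(y_n)$ bounded in $\cH_2$; then $(T^*y_n)$ is bounded, so $(TT^*y_n)$ has a norm-convergent subsequence, and the identity $\|T^*(y_{n_k}-y_{n_j})\|^2=\langle TT^*(y_{n_k}-y_{n_j}),\,y_{n_k}-y_{n_j}\rangle$ shows that $(T^*y_{n_k})$ is Cauchy, so $T^*$ is compact; the reverse implication (v)$\Ra$(i) follows by applying the same reasoning to $T^*$, whose adjoint is $T$. For (i)$\LRa$(vi) the key is the identity $\||T|x\|^2=\langle T^*Tx,x\rangle=\|Tx\|^2$, valid for every $x\in\cH_1$, which shows that $T$ and $|T|$ satisfy condition (iv) simultaneously; combined with (i)$\LRa$(iv) this gives the equivalence. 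I expect the compactness of the adjoint, (i)$\Ra$(v), to be the main obstacle: although the Hilbert-space inner-product identity renders it short, it is the one step that is genuinely more than a reformulation, and the argument must be routed through $TT^*$ rather than attempted by a direct subsequence extraction on $T^*$.
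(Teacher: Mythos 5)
Your proposal is correct, but there is nothing in the paper to compare it against: this proposition sits in the appendix (Section~\ref{s:app}), which the author explicitly presents as a review of standard material taken from Birman--Solomjak and Conway, and no proof is given there. Your argument is therefore supplying what the paper deliberately omits, and it does so along the standard textbook route: weak sequential compactness of the closed unit ball handles (i)$\LRa$(ii)$\LRa$(iii)$\LRa$(iv), a diagonal extraction handles (vii)$\Ra$(i), the identity $\|T^*(y-y')\|^2=\langle TT^*(y-y'),y-y'\rangle$ together with Cauchy--Schwarz gives (i)$\Ra$(v) (you are right that this is the one step that is more than a reformulation, and routing it through $TT^*$ is exactly the clean Hilbert-space shortcut that avoids Schauder's theorem), and $\||T|x\|=\|Tx\|$ reduces (vi) to (iv). Two points are compressed but harmless: in (i)$\Ra$(ii) you should say explicitly that weak sequential compactness of the ball in a possibly nonseparable space follows by passing to the closed separable subspace generated by the sequence (or by Eberlein--\v{S}mulian), and in (iii)$\Ra$(iv) you should record that a weakly convergent sequence is bounded (uniform boundedness) before invoking (iii). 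Neither is a gap.
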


Let $\cB_0(\cH_1,\cH_2)$ denote the collection of all compact operators $T\colon \cH_1\ra \cH_2$. Clearly, 
$\cB_0(\cH_1,\cH_2)$ is a vector subspace of $\cB(\cH_1,\cH_2)$ and, as a consequence of the item (vii) in the
previous proposition, it follows that it is closed with respect to the uniform topology (the topology induced by
the operator norm).

Given two vectors $x\in \cH_1$ and $y\in \cH_2$, the linear operator 
$x\otimes \overline y\colon \cH_1\ra \cH_2$
defined by $(x\otimes \overline y)z:= \langle z,x\rangle y$, for all $z\in \cH_1$, is linear
and its range is spanned by the vector $y$, in particular it is bounded. 
More generally, a linear operator $T\colon \cH_1\ra \cH_2$ has
\emph{finite rank} if the range of $T$, $\ran(T)=T\cH_1\subseteq \cH_2$, is finite dimensional. Clearly, any finite
rank operator is bounded. Also, it is easy to see that, for any operator $T\in \cB(\cH_1,\cH_2)$ with finite rank,
there exist $x_1,\ldots,x_n\in \cH_1$ and $y_1,\ldots,y_n\in \cH_2$ such that
\begin{equation}\label{e:tesuj}
T=\sum_{j=1}^n x_j\otimes \overline y_j.
\end{equation}

Let $\cB_{00}(\cH_1,\cH_2)$ denote the collection
of all finite rank operators $T\colon \cH_1\ra \cH_2$. Clearly, $\cB_{00}(\cH_1,\cH_2)$ is a vector subspace of 
$\cB_0(\cH_1,\cH_2)$. The decomposition \eqref{e:tesuj} can be put in 
the following equivalent form: 
\begin{equation}\label{e:tesujon}
T=\sum_{j=1}^n \lambda_n x_n\otimes\overline y_n,
\end{equation}
where $\{x_1,\ldots,x_n\}$ and $\{y_1,\ldots,y_n\}$ are orthonormal in $\cH_1$ and, respectively, in $\cH_2$, 
and $\lambda_1,\ldots,\lambda_n$ are nonzero scalars. Then, it has the following generalisation to
compact operators that are not of finite rank.

\begin{proposition} With notation as before, let $T\in \cB(\cH_1,\cH_2)$. The following assertions are equivalent.
\begin{itemize}
\item[(i)] $T$ is compact and with infinite rank.
\item[(ii)] There exist $(x_n)_n$ an orthonormal sequence in $\cH_1$, an orthonormal sequence $(y_n)_n$
in $\cH_2$, and a sequence of nontrivial scalars $(\lambda_n)_n$ such that
\begin{equation}\label{e:schmidt}
T=\sum_{n=1}^\infty \lambda_n x_n\otimes \overline y_n,
\end{equation}
where the series converges in the operator norm.
\end{itemize}
In particular, $\cB_{00}(\cH_1,\cH_2)$ is dense in $\cB_0(\cH_1,\cH_2)$, with respect to the
uniform topology.
\end{proposition}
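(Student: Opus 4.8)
The plan is to obtain the representation \eqref{e:schmidt} as the Schmidt (singular value) decomposition of $T$, produced by applying the spectral theorem to the modulus $|T|=(T^*T)^{1/2}$.

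I would begin with the easy implication (ii)$\Ra$(i). If $T=\sum_{n=1}^\infty \lambda_n x_n\otimes\overline{y_n}$ with convergence in operator norm, then each partial sum $T_N=\sum_{n=1}^N\lambda_n x_n\otimes\overline{y_n}$ has range contained in $\Span\{y_1,\dots,y_N\}$, hence is of finite rank and a fortiori compact; since the compact operators form a norm-closed subspace of $\cB(\cH_1,\cH_2)$, the limit $T$ is compact. Using $(x_n\otimes\overline{y_n})z=\langle z,x_n\rangle y_n$ together with the orthonormality of $(x_n)_n$ one computes $Tx_m=\lambda_m y_m$, and as the $\lambda_m$ are nontrivial and the $(y_m)_m$ are orthonormal, $\ran(T)$ contains infinitely many linearly independent vectors, so $T$ has infinite rank. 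The concluding density statement then drops out at once: every finite rank operator is compact, and by (i)$\Ra$(ii) every infinite rank compact operator is the operator-norm limit of its finite rank truncations $T_N$, so $\cB_{00}(\cH_1,\cH_2)$ is dense in $\cB_0(\cH_1,\cH_2)$.

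For the substantive direction (i)$\Ra$(ii) I would form $|T|=(T^*T)^{1/2}\in\cB(\cH_1)$, a positive operator which is compact because $T$ is. From $\||T|x\|^2=\langle T^*Tx,x\rangle=\|Tx\|^2$ one gets $\ker(|T|)=\ker(T)$, and since $T$ has infinite rank the space $(\ker T)^\perp=\overline{\ran(|T|)}$ is infinite dimensional. The spectral theorem for positive compact operators then furnishes an orthonormal basis $(x_n)_n$ of $(\ker T)^\perp$ with eigenvalues $s_1\geq s_2\geq\cdots>0$, $s_n\to 0$, such that $|T|x_n=s_nx_n$ and hence $T^*Tx_n=s_n^2x_n$. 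I would set $y_n:=s_n^{-1}Tx_n$ and verify orthonormality directly from $\langle Tx_n,Tx_m\rangle=\langle T^*Tx_n,x_m\rangle=s_n^2\delta_{nm}$, which gives $\langle y_n,y_m\rangle=\delta_{nm}$. To obtain the representation, for arbitrary $x\in\cH_1$ I write $x=Px+(x-Px)$, where $P$ is the orthogonal projection onto $(\ker T)^\perp$ and $Px=\sum_n\langle x,x_n\rangle x_n$; since $T$ annihilates $\ker T$, continuity of $T$ yields $Tx=\sum_n\langle x,x_n\rangle Tx_n=\sum_n s_n\langle x,x_n\rangle y_n$, that is, $T=\sum_n s_n\, x_n\otimes\overline{y_n}$ with $\lambda_n=s_n$. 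For the operator-norm convergence, with $T_N$ the $N$-th partial sum and $\|x\|\leq 1$, orthonormality of $(y_n)_n$ gives $\|(T-T_N)x\|^2=\sum_{n>N}s_n^2|\langle x,x_n\rangle|^2\leq s_{N+1}^2\|x\|^2$, whence $\|T-T_N\|\leq s_{N+1}\to 0$.

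The main obstacle lies in this hard direction, and it rests entirely on the spectral theorem for the positive compact operator $|T|$. The two delicate points are that the eigenvectors $(x_n)_n$ must be taken as an orthonormal basis of $(\ker T)^\perp$ rather than of all of $\cH_1$, so that the kernel of $T$ is absorbed correctly in the decomposition of $x$, and that the passage from merely strong to operator-norm convergence of the series is what forces the use of the decay $s_n\to 0$ of the singular values.
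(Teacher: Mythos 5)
Your proof is correct. The paper states this proposition without proof, as standard background in its appendix on compact operators, but your argument — diagonalising the positive compact operator $|T|=(T^*T)^{1/2}$ on $(\ker T)^\perp$, setting $y_n=s_n^{-1}Tx_n$, and using $\|T-T_N\|\leq s_{N+1}\to 0$ — is exactly the canonical route, and it coincides with the machinery of singular numbers and the polar decomposition ($\psi_n=V\phi_n$) that the paper itself develops immediately afterwards for the refined Schmidt expansion.
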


The decomposition of compact operators as in \eqref{e:schmidt} or \eqref{e:tesujon} is called the 
\emph{Schmidt decomposition}
of $T$.

If $\cH$ is a Hilbert
space then we denote $\cB_0(\cH):=\cB_0(\cH,\cH)$ and $\cB_{00}(\cH):=\cB_{00}(\cH,\cH)$. 
Then, $\cB_{00}(\cH)\subseteq \cB_0(\cH)$ are 
two sided ideals of $\cB(\cH)$, stable under taking the adjoint, and $\cB_{00}(\cH)$ is dense in $\cB_0(\cH)$. 
Also, $\cB_0(\cH)$ is a $C^*$-subalgebra of $\cB(\cH)$ and it does not have a unit, unless $\cH$ is finite dimensional.

From the point of view of spectral theory, we have the following result. As usually, given $T\in\cB(\cH)$ for some 
Hilbert space $\cH$, we denote by $\sigma(T)$ the \emph{spectrum} of $T$, 
that is, $\sigma(T):=\{\lambda \in \CC\mid \lambda I-T\mbox{ is not invertible in }\cB(\cH)\}$.
\begin{proposition}
Let $T\in \cB_0(\cH)$ for some Hilbert space $\cH$.
\begin{itemize}
\item[(a)] $\sigma(T)$ is countable and, if infinite, it accumulates only at $0$.
\item[(b)] Any $\lambda\in\sigma(T)\setminus\{0\}$ is an eigenvalue of $T$ of finite multiplicity.
\end{itemize}
\end{proposition}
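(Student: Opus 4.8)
The plan is to establish (b) first, from which (a) follows readily, and the whole argument rests on the Riesz theory of the operator $\lambda I-T$ for $\lambda\neq 0$. Fix $\lambda\in\CC\setminus\{0\}$; since $\cB_0(\cH)$ is an ideal, $S:=\lambda^{-1}T$ is compact and $\lambda I-T=\lambda(I-S)$, so it suffices to analyse $I-S$ with $S$ compact. Throughout I would use the compactness characterisation recalled in the appendix: $T$ is compact if and only if every bounded sequence $(x_n)_n$ admits a subsequence $(x_{k_n})_n$ with $(Tx_{k_n})_n$ norm-convergent. The two pillars are that $\ker(I-S)$ is finite dimensional with $\ran(I-S)$ closed, and the Fredholm alternative $\ker(I-S)=0\iff\ran(I-S)=\cH$.

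First I would dispose of finite multiplicity and closedness of the range. If $\ker(I-S)$ were infinite dimensional it would contain an orthonormal sequence $(x_n)_n$; then $Sx_n=x_n$ forces $\|Sx_n-Sx_m\|^2=2$ for $n\neq m$, so $(Sx_n)_n$ has no convergent subsequence, contradicting compactness. Hence $\dim\ker(I-S)<\infty$, which is exactly the finite multiplicity asserted in (b). For closedness of the range I would restrict $I-S$ to $\cH\ominus\ker(I-S)$, on which it is injective, and show it is bounded below: otherwise there is a unit sequence $(w_n)_n$ in $\cH\ominus\ker(I-S)$ with $(I-S)w_n\to 0$; passing to a subsequence with $Sw_n\to v$ gives $w_n=(I-S)w_n+Sw_n\to v$, whence $\|v\|=1$ and $(I-S)v=0$ with $v\in\cH\ominus\ker(I-S)$, a contradiction. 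Being bounded below on the complement of its kernel, $I-S$ has closed range.

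The main obstacle is the Fredholm alternative, i.e. that injectivity of $I-S$ forces surjectivity. Here I would consider the decreasing chain of closed subspaces $R_n:=\ran((I-S)^n)$, each closed because $(I-S)^n=I-S_n$ with $S_n$ compact by the ideal property. If $I-S$ is injective but not surjective then $R_1\subsetneq R_0=\cH$, and an induction using injectivity shows $R_{n+1}\subsetneq R_n$ for every $n$: should $R_{n+1}=R_n$, any $x\in R_{n-1}\setminus R_n$ would satisfy $(I-S)x\in R_n=(I-S)R_n$, forcing $x\in R_n$ by injectivity. Choosing unit vectors $z_n\in R_n\ominus R_{n+1}$ and writing $Sz_n-Sz_m=z_n-\bigl[(I-S)z_n+z_m-(I-S)z_m\bigr]$ with the bracket lying in $R_{n+1}$ for $m>n$, orthogonality gives $\|Sz_n-Sz_m\|\geq 1$, contradicting compactness. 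Thus $I-S$ injective implies surjective; combined with the bounded-below property and the fact that a bijective bounded operator has bounded inverse (Closed Graph Theorem), $I-S$ is then invertible in $\cB(\cH)$. Taking contrapositives, every $\lambda\in\sigma(T)\setminus\{0\}$ fails to make $\lambda I-T$ injective, i.e. is an eigenvalue, which proves (b).

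Finally, (a) follows from (b). By (b) the nonzero spectrum consists entirely of eigenvalues, so it remains to show that for each $\varepsilon>0$ there are only finitely many eigenvalues with $|\lambda|\geq\varepsilon$; the conclusion then follows by taking $\varepsilon=1/n$ and a countable union. Suppose not: pick distinct eigenvalues $\lambda_1,\lambda_2,\dots$ with $|\lambda_n|\geq\varepsilon$ and corresponding eigenvectors $v_n$, which are linearly independent, so the spans $M_n:=\Span\{v_1,\dots,v_n\}$ strictly increase. Select unit vectors $y_n\in M_n\ominus M_{n-1}$. Since $(T-\lambda_n I)y_n\in M_{n-1}$ and $Ty_m\in M_m\subseteq M_{n-1}$ for $m<n$, the vector $Ty_n-Ty_m-\lambda_n y_n$ lies in $M_{n-1}$, so orthogonality of $y_n$ to $M_{n-1}$ yields $\|Ty_n-Ty_m\|\geq|\lambda_n|\geq\varepsilon$. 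This contradicts the compactness of $T$, completing the proof.
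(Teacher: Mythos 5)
Your proof is correct and complete. Note, though, that the paper itself offers no proof of this proposition: it appears in the appendix (Section~\ref{s:app}), which explicitly only \emph{reviews} the standard facts on compact operators and refers the reader to \cite{BirmanSolomjak} and \cite{Conway}. What you have written is a self-contained rendition of the classical Riesz--Schauder theory, streamlined for the Hilbert-space setting: where the Banach-space argument needs the Riesz lemma to produce almost-orthogonal vectors in the chains $\ker(I-S)\supseteq\cdots$ and $R_n=\ran((I-S)^n)$, you simply take unit vectors in orthogonal complements, which makes the lower bounds $\|Sz_n-Sz_m\|\geq 1$ and $\|Ty_n-Ty_m\|\geq|\lambda_n|$ immediate. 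All the key steps check out: the reduction to $I-S$ with $S=\lambda^{-1}T$ compact, finite-dimensionality of the kernel, boundedness below on $\cH\ominus\ker(I-S)$ giving closed range, the observation that $(I-S)^n=I-S_n$ with $S_n$ compact (needed so that each $R_n$ is closed and orthogonal complements can be taken inside it), the stabilisation argument for the range chain, and the eigenvector-span argument for part (a). Two points you assert without proof but which are genuinely standard and harmless to leave so: that eigenvectors belonging to distinct eigenvalues are linearly independent, and that a bounded bijection of $\cH$ has bounded inverse (Open Mapping or Closed Graph Theorem). If the paper's author wished to include a proof rather than a citation, yours would serve essentially verbatim.
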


For the case of a compact normal operators, the Schmidt decomposition provides a concrete form of
the spectral measure. To see this, let us first observe that any 
projection of rank one in $\cH$ is of the form $x\otimes\overline x$, where $x\in \cH$ with $\|x\|=1$.

\begin{proposition} Let $T\in \cB(\cH)$ be a nontrivial compact normal operator. Then, there exists an 
orthonormal 
sequence $(x_n)_{n=1}^N$, where $N\in \NN\cup\{\infty\}$, 
and a sequence of scalars $(\lambda_n)_{n=1}^N$, with $\lambda_n\xrightarrow[n\ra\infty]{} 0$ if $N=\infty$,
such that 
\begin{equation*}
T=\sum_{n=1}^N \lambda_n x_n\otimes\overline x_n.
\end{equation*}
In particular, $\sigma(T)\subseteq \{\lambda_n\mid n=1,\dots,N\}\cup\{0\}$, with equality if either $\cH$ is infinite
dimensional or $\cH$ is finite dimensional and $T$ is not invertible, and for each $n=1,\ldots,N$, $x_n$ is an 
eigenvector corresponding to $T$ and the eigenvalue $\lambda_n$.
\end{proposition}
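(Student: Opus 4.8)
The plan is to produce an orthonormal basis of $\cH$ consisting of eigenvectors of $T$, by exhausting all the nonzero eigenvalues and then showing that the leftover subspace is annihilated by $T$. First I would record two consequences of normality. If $Tx=\lambda x$ then, because $T$ is normal, $\ker(T-\lambda I)=\ker(T^*-\overline\lambda I)$, so $T^*x=\overline\lambda x$; and if $Tx=\lambda x$, $Ty=\mu y$ with $\lambda\neq\mu$, then $\lambda\langle x,y\rangle=\langle Tx,y\rangle=\langle x,T^*y\rangle=\mu\langle x,y\rangle$ forces $\langle x,y\rangle=0$, so eigenvectors for distinct eigenvalues are orthogonal.

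Next, by the spectral result for compact operators quoted above, $\sigma(T)$ is at most countable, accumulates only at $0$ if infinite, and every $\lambda\in\sigma(T)\setminus\{0\}$ is an eigenvalue of finite multiplicity. For each such eigenvalue I would choose an orthonormal basis of the finite-dimensional eigenspace $\ker(T-\lambda I)$; listing these bases together, and repeating each eigenvalue according to the dimension of its eigenspace, yields an orthonormal sequence $(x_n)_{n=1}^N$ with $N\in\NN\cup\{\infty\}$ and scalars $(\lambda_n)_n$ satisfying $Tx_n=\lambda_n x_n$. Arranging the listing so that $(|\lambda_n|)_n$ is nonincreasing, the accumulation only at $0$ gives $\lambda_n\to 0$ when $N=\infty$.

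The crux is to show that $\cM:=\overline{\Span}\{x_n\mid n\}$ satisfies $\cM^\perp\subseteq\ker(T)$. Since $Tx_n=\lambda_n x_n$ and $T^*x_n=\overline{\lambda_n}x_n$, the subspace $\cM$ is invariant under both $T$ and $T^*$, hence reduces $T$, so $\cM^\perp$ reduces $T$ as well and the restriction $T_0:=T|_{\cM^\perp}$ is again a compact normal operator. Any nonzero eigenvalue of $T_0$ would be a nonzero eigenvalue of $T$ with an eigenvector orthogonal to every $x_n$, contradicting the exhaustiveness of the listing; thus $T_0$ has no nonzero eigenvalue, whence $\sigma(T_0)=\{0\}$ and its spectral radius is $0$. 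Using that for a normal operator the norm equals the spectral radius, $\|T_0\|=0$, so $T_0=0$ and $\cM^\perp\subseteq\ker(T)$. The main obstacle is exactly this step: it hinges on $\cM$ being reducing (which is where normality enters, through $T^*x_n=\overline{\lambda_n}x_n$) together with the identity $\|T_0\|=r(T_0)$ for normal operators, without which a compact operator with trivial spectrum need not vanish.

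Finally I would assemble the decomposition. For arbitrary $x\in\cH$, write $x=\sum_n\langle x,x_n\rangle x_n + P_{\cM^\perp}x$ using the orthonormality of $(x_n)$; applying $T$ and using $TP_{\cM^\perp}x=0$ gives $Tx=\sum_n\lambda_n\langle x,x_n\rangle x_n=\bigl(\sum_n\lambda_n\,x_n\otimes\overline{x_n}\bigr)x$. Convergence in the operator norm is immediate, since the tail $T-\sum_{n=1}^m\lambda_n x_n\otimes\overline{x_n}$ vanishes on $\cM^\perp$ and acts diagonally on $\cM$, so its norm is $\sup_{n>m}|\lambda_n|$, which tends to $0$ as $m\to\infty$ when $N=\infty$. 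For the spectrum statement, each $\lambda_n$ is an eigenvalue and hence lies in $\sigma(T)$, while no other nonzero point can belong to $\sigma(T)$ by the exhaustion; and $0\in\sigma(T)$ precisely when $T$ fails to be invertible, which happens whenever $\cH$ is infinite dimensional, since a compact operator cannot be invertible there, or when $\cH$ is finite dimensional with $T$ noninvertible, giving the asserted equality.
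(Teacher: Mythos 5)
Your proof is correct and complete: the reduction of $T$ by the closed span of the eigenvectors, the use of $\|T_0\|=r(T_0)$ for the normal restriction to kill the leftover piece, and the norm estimate $\sup_{n>m}|\lambda_n|$ for the tails are exactly the standard argument, and the spectrum discussion at the end is handled properly. The paper states this proposition in its appendix as review material without giving any proof, so there is nothing to compare against; your argument is the textbook one and would serve as a proof here.
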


Given a compact operator $T\in\cB(\cH,\cK)$, the operator $|T|=(T^*T)^{1/2}$ is a compact positive operator and 
hence its spectrum consists of a finite or infinite sequence of nonzero eigenvalues, all of them positive, and 
probably $0$. 
Let this sequence of positive eigenvalues, called the sequence of
\emph{singular numbers}  of $T$, 
be denoted by $(s_n(T))_{n=1}^N$, counted with their multiplicities, 
where $N\in\NN$ or $N=\infty$, 
with $s_{n+1}(T)\leq s_n(T)$ for all $n=1,\ldots,N$.  $T$ is a finite rank operator if and only 
if $N\in\NN$. If $T$ is not of finite rank then $s_n(T)\xrightarrow[n\ra\infty]{}0$. Let $(\phi_n)_n$ 
be an orthonormal sequence of eigenvectors of $|T|$ such that $|T|\phi_n=s_n(T)\phi_n$ for all $n$. 
Anyway, $\{\phi_n\}_n$ is an orthonormal basis of $\cH\ominus\ker(T)$.

One can 
reformulate the Schmidt decomposition  \eqref{e:schmidt} of a compact operator in a more precise fashion. Let 
$T=V|T|$ be the \emph{polar decomposition} of $T$, where $V\in\cB(\cH,\cK)$ is the unique partial isometry such 
that $V^*V$ is the orthogonal projection onto $\cH\ominus\ker(T)$ and $VV^*$ is the orthogonal projection onto
$\cK\ominus\ker(T^*)$. Let $\psi_n=V\phi_n$ for all $n$. Then, for all $n$, we have
\begin{equation}\label{e:tefa}
T\phi_n=s_n(T)\psi_n,\quad T^*\psi_n=s_n(T)\phi_n,\quad T^*T\phi_n
=s_n^2(T)\phi_n,\quad TT^*\psi_n=s_n(T)^2 \psi_n.
\end{equation}

\begin{theorem} Given $T\in\cB_0(\cH)$, with notation as before, the following assertions hold true.

\nr{1} The operator $T$ has the expansion
\begin{equation}\label{e:schmidto}
T=\sum_{n=1}^N s_n(T)\psi_n\otimes\ol{\phi_n},
\end{equation}
where the convergence holds with respect to the operator norm of $\cB(\cH)$.

\nr{2} The following equalities hold,
\begin{equation*}
T^*= \sum_{n=1}^N s_n(T)\phi_n\otimes\ol{\psi_n},
\quad T^*T=\sum_{n=1}^N s_n^2(T)\phi_n\otimes\ol{\phi_n}, \quad
TT^*= \sum_{n=1}^N s_n^2(T)\psi_n\otimes\ol{\psi_n},
\end{equation*}
where the convergence of the series holds with respect to the operator norm of $\cB(\cH)$.
\end{theorem}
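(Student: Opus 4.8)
The plan is to prove part (1) by first exhibiting the series as the strong limit of its partial sums and then upgrading to operator-norm convergence, after which part (2) follows formally from continuity of the adjoint and of multiplication in $\cB(\cH)$.

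First I would set $T_m:=\sum_{n=1}^m s_n(T)\,\psi_n\otimes\ol{\phi_n}$. Since the $\phi_n$ are eigenvectors of $|T|$ for its nonzero eigenvalues and $\ker(|T|)=\ker(T)$, the family $\{\phi_n\}_n$ is an orthonormal basis of $\ol{\ran(|T|)}=\cH\ominus\ker(T)$. Hence any $z\in\cH$ splits as $z=z_0+\sum_n\langle z,\phi_n\rangle\phi_n$ with $z_0\in\ker(T)$, and applying $T$, using $Tz_0=0$ and $T\phi_n=s_n(T)\psi_n$ from \eqref{e:tefa}, gives $Tz=\sum_n s_n(T)\langle z,\phi_n\rangle\psi_n=\bigl(\sum_n s_n(T)\,\psi_n\otimes\ol{\phi_n}\bigr)z$. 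Thus $T_m z\to Tz$ for every $z$, so the series converges strongly to $T$.

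Next I would upgrade this to norm convergence by estimating the remainder $R_m:=T-T_m$. For any $z\in\cH$, orthonormality of the $\psi_n$ and the monotonicity $s_{m+1}(T)\geq s_{m+2}(T)\geq\cdots$ give
\begin{equation*}
\|R_m z\|^2=\sum_{n>m}s_n(T)^2\,|\langle z,\phi_n\rangle|^2\leq s_{m+1}(T)^2\,\|z\|^2,
\end{equation*}
so $\|T-T_m\|\leq s_{m+1}(T)$. If $N<\infty$ the sum is finite and $T=T_N$; if $N=\infty$ then $T$ is not of finite rank, hence $s_n(T)\to 0$ and $\|T-T_m\|\to 0$. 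This proves \eqref{e:schmidto}.

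For part (2) I would use the rank-one composition rule $(a\otimes\ol{b})(c\otimes\ol{d})=\langle c,b\rangle\,a\otimes\ol{d}$ and the identity $(\psi_n\otimes\ol{\phi_n})^*=\phi_n\otimes\ol{\psi_n}$. Continuity of the adjoint gives $T^*=\sum_n s_n(T)\,\phi_n\otimes\ol{\psi_n}$ in operator norm; then joint continuity of multiplication applied to the norm-convergent series, followed by the collapse of the resulting double sum under $\langle\psi_n,\psi_m\rangle=\delta_{mn}$ and $\langle\phi_n,\phi_m\rangle=\delta_{mn}$, yields $T^*T=\sum_n s_n(T)^2\,\phi_n\otimes\ol{\phi_n}$ and $TT^*=\sum_n s_n(T)^2\,\psi_n\otimes\ol{\psi_n}$; alternatively these can be read off directly from \eqref{e:tefa}. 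The only genuinely non-formal point, and the place where compactness is essential, is the passage from strong to norm convergence: it rests on the tail bound $\|T-T_m\|\leq s_{m+1}(T)$ together with the fact that the singular numbers of a compact operator decrease to $0$. For a merely bounded operator the analogous series would converge only strongly, so that is where I expect the (modest) main difficulty to lie.
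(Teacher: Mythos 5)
Your proof is correct and complete. Note, however, that the paper does not actually prove this theorem: it appears in the appendix as standard background (quoted from Birman--Solomjak and Conway) with no argument given, so there is nothing in the paper to compare your proof against. Your route --- strong convergence of the partial sums via the spectral decomposition of $|T|$ restricted to $\cH\ominus\ker(T)$, upgraded to operator-norm convergence by the tail bound $\|T-T_m\|\le s_{m+1}(T)$ together with $s_n(T)\to 0$, and then part (2) by continuity of the adjoint and of composition plus the rank-one multiplication rule --- is exactly the standard proof, and every step checks out. One small point worth flagging: you silently use the convention $(x\otimes\ol y)z=\langle z,y\rangle x$, whereas the appendix literally defines $(x\otimes\ol y)z=\langle z,x\rangle y$; under the paper's literal convention the right-hand side of \eqref{e:schmidto} would represent $T^*$ rather than $T$. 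Your reading is the one that makes \eqref{e:tefa} and \eqref{e:schmidto} mutually consistent, so this is a typo in the paper's definition of the rank-one operator rather than a defect in your argument, but it deserves a remark if the proof is to be inserted verbatim.
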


\subsection{Trace-Class Operators}\label{ss:tco}
 An operator $T\in\cB_0(\cH)$ is called of \emph{trace-class} or \emph{nuclear} if
\begin{equation}\label{e:trc}
\|T\|_1=\sum_{n=1}^\infty s_n(T)<\infty.
\end{equation}
We denote by $\cB_1(\cH)$ the collection of all trace-class operators on $\cH$.

\begin{theorem} $\cB_1(\cH)$ is a Banach space with the norm $\|\cdot\|_1$ defined at \eqref{e:trc}. In addition, 
$\cB_1(\cH)$ is a two-sided $*$-ideal of $\cB(H)$, for all $T\in \cB_1(\cH)$ we have $\|T\|\leq \|T\|_1$, 
$\|T^*\|_1=\|T\|_1$, and $\|ATB\|_1\leq \|A\| \|T\|_1 \|B\|$ for all $A,B\in\cB(\cH)$.  
\end{theorem}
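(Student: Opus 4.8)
The plan is to base the whole argument on one variational description of the trace norm, namely
\[
\|T\|_1=\sup\sum_{n}|\langle Te_n,f_n\rangle|,
\]
the supremum running over all pairs of finite orthonormal systems $(e_n)$, $(f_n)$ in $\cH$ (with the convention that the supremum may be $+\infty$ when $T$ is compact but not trace-class). First I would establish this identity for $T\in\cB_0(\cH)$. Using the singular value decomposition there are orthonormal systems $(\phi_m)$, $(\psi_m)$ with $T\phi_m=s_m(T)\psi_m$, see \eqref{e:tefa}, so that $Tz=\sum_m s_m(T)\langle z,\phi_m\rangle\psi_m$; the choice $e_n=\phi_n$, $f_n=\psi_n$ for $n\leq N$ gives $\sum_{n=1}^N s_n(T)$, whose limit is $\sum_m s_m(T)$, so the supremum is at least $\|T\|_1$. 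For the reverse inequality I would expand $\langle Te_n,f_n\rangle=\sum_m s_m(T)\langle e_n,\phi_m\rangle\langle\psi_m,f_n\rangle$ and, for each fixed $m$, bound $\sum_n|\langle e_n,\phi_m\rangle|\,|\langle\psi_m,f_n\rangle|$ by Cauchy--Schwarz in $n$ followed by Bessel's inequality applied to both orthonormal systems, getting $\leq\|\phi_m\|\,\|\psi_m\|=1$; summing in $m$ yields $\sum_n|\langle Te_n,f_n\rangle|\leq\sum_m s_m(T)=\|T\|_1$.

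With this formula secured, the norm axioms are essentially free. Absolute homogeneity is clear, and the triangle inequality holds because, for fixed systems, $\sum_n|\langle(S+T)e_n,f_n\rangle|\leq\sum_n|\langle Se_n,f_n\rangle|+\sum_n|\langle Te_n,f_n\rangle|$, and a supremum of a sum is dominated by the sum of the suprema; definiteness follows since $\|T\|_1=0$ forces every $s_n(T)=0$, hence $|T|=0$ and $T=0$. The estimate $\|T\|\leq\|T\|_1$ is just $s_1(T)\leq\sum_n s_n(T)$. For $\|T^*\|_1=\|T\|_1$ I would use that $T^*T$ and $TT^*$ have the same nonzero eigenvalues, so $s_n(T^*)=s_n(T)$ for every $n$; this is also immediate from the variational formula upon swapping the roles of $(e_n)$ and $(f_n)$ and conjugating.

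Next I would treat the ideal property in the sharp form $\|ATB\|_1\leq\|A\|\,\|T\|_1\,\|B\|$ (the intended reading of the last display, with $\|B\|$ in place of $\|B\|_1$). Since the compact operators form a norm-closed two-sided ideal, $ATB$ is compact whenever $T$ is, so the formula above applies to it. I would first prove $\|AS\|_1\leq\|A\|\,\|S\|_1$: writing $\langle ASe_n,f_n\rangle=\sum_m s_m(S)\langle e_n,\phi_m\rangle\langle A\psi_m,f_n\rangle$ and repeating the Cauchy--Schwarz/Bessel estimate gives, for each $m$, $\sum_n|\langle e_n,\phi_m\rangle|\,|\langle A\psi_m,f_n\rangle|\leq\|\phi_m\|\,\|A\psi_m\|\leq\|A\|$, whence $\sum_n|\langle ASe_n,f_n\rangle|\leq\|A\|\sum_m s_m(S)$ and, taking the supremum, $\|AS\|_1\leq\|A\|\,\|S\|_1$. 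Right multiplication is then handled by adjoints, $\|SB\|_1=\|(SB)^*\|_1=\|B^*S^*\|_1\leq\|B^*\|\,\|S^*\|_1=\|B\|\,\|S\|_1$, using $\|T^*\|_1=\|T\|_1$ and the stability of $\cB_0(\cH)$ under adjoints. Composing the two bounds gives $\|ATB\|_1\leq\|A\|\,\|TB\|_1\leq\|A\|\,\|T\|_1\,\|B\|$, and in particular $\cB_1(\cH)$ is a two-sided $*$-ideal of $\cB(\cH)$.

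Finally, for completeness I would take a $\|\cdot\|_1$-Cauchy sequence $(T_k)$. Because $\|\cdot\|\leq\|\cdot\|_1$ it is Cauchy in the operator norm, hence converges uniformly to some $T\in\cB_0(\cH)$. Fixing finite orthonormal systems $(e_n)_{n=1}^N$, $(f_n)_{n=1}^N$ and using uniform convergence to pass to the limit inside the finite sum, I would obtain $\sum_{n=1}^N|\langle(T-T_k)e_n,f_n\rangle|=\lim_j\sum_{n=1}^N|\langle(T_j-T_k)e_n,f_n\rangle|\leq\liminf_j\|T_j-T_k\|_1$. Given $\epsilon>0$, for $k$ beyond the Cauchy threshold the right-hand side is below $\epsilon$ uniformly in the chosen systems and in $N$, so the variational formula applied to the compact operator $T-T_k$ yields $\|T-T_k\|_1\leq\epsilon$; thus $T-T_k\in\cB_1(\cH)$, hence $T\in\cB_1(\cH)$ and $T_k\to T$ in $\|\cdot\|_1$. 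The only genuinely technical point is the reverse inequality in the variational formula of the first paragraph; once that double Bessel/Cauchy--Schwarz estimate is in place, every remaining assertion reduces to taking suprema.
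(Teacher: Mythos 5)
Your proposal is correct. Note, however, that the paper does not prove this theorem at all: it appears in the appendix (Section~\ref{s:app}) as a review of standard material, with the reader referred to \cite{BirmanSolomjak} and \cite{Conway}. So there is no in-paper argument to compare against; what can be said is how your route relates to the toolkit the paper does set up. The standard textbook treatments (and the characterisations the paper states right after this theorem) work either directly with singular-number inequalities such as $s_n(ATB)\leq\|A\|\,\|B\|\,s_n(T)$, or with the orthonormal-basis criteria $\sum_j\langle|T|h_j,h_j\rangle<\infty$ and the convergence of $\sum_j\langle Th_j,g_j\rangle$. Your single variational identity $\|T\|_1=\sup\sum_n|\langle Te_n,f_n\rangle|$ over finite orthonormal systems is a clean unifying substitute: the lower bound from the Schmidt decomposition \eqref{e:schmidto} and the upper bound from the double Cauchy--Schwarz/Bessel estimate are both correct (the interchange of sums is justified by nonnegativity), and once the formula is in place the triangle inequality, the bound $\|AS\|_1\leq\|A\|\,\|S\|_1$, the adjoint symmetry $s_n(T^*)=s_n(T)$, and the completeness argument via the subordinate operator norm all follow by taking suprema, exactly as you say. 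Two small remarks: you are right that the displayed inequality in the statement contains a typo and should read $\|ATB\|_1\leq\|A\|\,\|T\|_1\,\|B\|$, since $\|B\|_1$ is not defined for a general $B\in\cB(\cH)$; and in the completeness step one does need (and you implicitly use) that $T-T_k$ is compact so that the variational formula applies to it, which holds because $\cB_0(\cH)$ is closed in the operator norm. What your approach buys is economy --- one lemma drives every assertion --- at the cost of front-loading the only genuinely technical estimate; the singular-number route distributes the work over several smaller inequalities.
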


The trace-class operators can be characterised within $\cB(\cH)$ by means of orthonormal bases.

\begin{theorem}
Let $T\in \cB(\cH)$. The following assertions are equivalent.
\begin{itemize}
\item[(i)] For some, equivalently, for any, orthonormal basis $(h_j)_{j\in\cJ}$ we have
\begin{equation*}
\sum_{j\in\cJ} \|Th_j\|<\infty.
\end{equation*}
\item[(ii)] For some, equivalently, for any, orthonormal basis $(h_j)_{j\in\cJ}$ we have
\begin{equation*}
\sum_{j\in\cJ} \langle |T|h_j,h_j\rangle_\cH<\infty.
\end{equation*}
In this case, we have
\begin{equation*}
\sum_{j\in\cJ} \langle |T|h_j,h_j\rangle_\cH=\sum_{n=1}^\infty s_n(T)=\|T\|_1,
\end{equation*}
in particular, the sum of the series does not depend on the orthonormal basis.
\item[(iii)] For any orthonormal bases $(h_j)_{j\in\cJ}$ and $(g_j)_{j\in\cJ}$ of $\cH$, 
the series $\sum_{j\in\cJ} \langle Th_j,g_j\rangle_\cH$ converges.
\end{itemize}
\end{theorem}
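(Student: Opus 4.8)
The plan is to run everything through condition (ii), which I regard as the genuinely basis-independent condition and the one directly tied to $\|T\|_1$; conditions (i) and (iii) will then be compared to it. Write $A=|T|=(T^*T)^{1/2}$ and recall $\|Tx\|=\|Ax\|$ and $\langle Ax,x\rangle=\|A^{1/2}x\|^2$ for every $x\in\cH$. First I would show that for the positive operator $A$ the quantity $\sum_{j\in\cJ}\langle Ah_j,h_j\rangle=\sum_{j\in\cJ}\|A^{1/2}h_j\|^2$ does not depend on the orthonormal basis: expanding $\|A^{1/2}h_j\|^2=\sum_{k}|\langle A^{1/2}h_j,g_k\rangle|^2$ in a second basis $(g_k)$ and using that all terms are nonnegative, Tonelli's theorem lets me interchange the two summations and obtain $\sum_j\|A^{1/2}h_j\|^2=\sum_k\|A^{1/2}g_k\|^2$. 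I denote this common value in $[0,\infty]$ by $\sigma$.

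Next I would show that $\sigma<\infty$ forces $T$ to be compact and that then $\sigma=\|T\|_1$. If $\sigma<\infty$, then for finite $F\subseteq\cJ$ the operator $A^{1/2}P_F$, with $P_F$ the projection onto $\Span\{h_j\mid j\in F\}$, has finite rank and $\|A^{1/2}-A^{1/2}P_F\|^2\le\sum_{j\notin F}\|A^{1/2}h_j\|^2\to0$, so $A^{1/2}$ is a norm limit of finite rank operators, hence compact; then $A=(A^{1/2})^2$ is compact and so is $T$, since $T$ is compact iff $|T|$ is. For compact $T$ I would evaluate $\sigma$ on an orthonormal basis consisting of eigenvectors $\phi_n$ of $A$, with $A\phi_n=s_n(T)\phi_n$, together with an orthonormal basis of $\ker(A)$, obtaining $\sigma=\sum_n s_n(T)=\|T\|_1$. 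Conversely, $T\in\cB_1(\cH)$ gives $\sum_n s_n(T)<\infty$ and the same evaluation shows $\sigma<\infty$. Thus \textbf{(ii)} is equivalent to $T\in\cB_1(\cH)$ and, in that case, equals $\|T\|_1$.

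For \textbf{(i)}$\Leftrightarrow$\textbf{(ii)} I would use only two elementary facts. On any basis $\langle Ah_j,h_j\rangle\le\|Ah_j\|\,\|h_j\|=\|Th_j\|$ by the Cauchy--Schwarz inequality, so summability of $\|Th_j\|$ forces summability of $\langle Ah_j,h_j\rangle$, i.e.\ (i) on a basis implies (ii). For the converse, once $T$ is known to be trace-class I evaluate on the eigenbasis of $A$, where $\|T\phi_n\|=\|A\phi_n\|=s_n(T)$, whence $\sum_n\|T\phi_n\|=\sum_n s_n(T)=\|T\|_1<\infty$; so a trace-class operator does realise (i) on a suitable basis. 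For \textbf{(ii)}$\Rightarrow$\textbf{(iii)} I would insert the Schmidt decomposition $T=\sum_n s_n(T)\,\psi_n\otimes\ol{\phi_n}$ and estimate the double series $\sum_{j,n}s_n(T)|\langle h_j,\phi_n\rangle|\,|\langle\psi_n,g_j\rangle|$; applying Cauchy--Schwarz in $j$ for each fixed $n$ bounds it by $\sum_n s_n(T)=\|T\|_1<\infty$, so the double series converges absolutely and $\sum_j\langle Th_j,g_j\rangle$ converges for every pair of bases.

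The crux is \textbf{(iii)}$\Rightarrow$\textbf{(ii)}, and this is where I expect the real work. First I would upgrade (iii) to absolute convergence: simultaneously permuting $(h_j)$ and $(g_j)$ by any bijection of $\cJ$ again yields orthonormal bases, so (iii) asserts that the scalar series $\sum_j\langle Th_j,g_j\rangle$ converges under every rearrangement, and for scalar series unconditional convergence is equivalent to absolute convergence; hence $\sum_j|\langle Th_j,g_j\rangle|<\infty$ for all orthonormal bases. Then, using the polar decomposition $T=VA$ with $V$ the partial isometry of initial space $\cH\ominus\ker(T)$, I would choose a basis adapted to $\ker(T)$ so that $\sum_j\langle Ah_j,h_j\rangle=\sum_{h_j\perp\ker(T)}\langle Th_j,Vh_j\rangle$, each summand being the nonnegative number $\langle Ah_j,h_j\rangle$ since $V^*Vh_j=h_j$ there. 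If this sum were infinite I would extract a countable subfamily on which it still diverges, complete the corresponding orthonormal set $\{Vh_j\}$ to a basis $(g_j)$ with $g_j=Vh_j$ on that subfamily, and obtain $\sum_j|\langle Th_j,g_j\rangle|=\infty$, contradicting the absolute convergence just established; hence $\sigma<\infty$ and (ii) holds. The delicate points I anticipate are the scalar unconditional-implies-absolute step phrased for an arbitrary index set, and the bookkeeping needed to complete $\{Vh_j\}$ to a genuine orthonormal basis indexed by $\cJ$ (harmless because the indices in $\ker(T)$ contribute zero), and I would isolate both as separate lemmas.
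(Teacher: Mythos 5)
The paper states this theorem in its appendix as quoted background and gives no proof, so there is nothing internal to compare against; your proposal has to stand on its own. Its core is correct and is the standard argument: basis-independence of $\sum_{j}\langle |T|h_j,h_j\rangle$ via Tonelli, identification of that common value with $\sum_n s_n(T)$ through compactness of $|T|^{1/2}$ and evaluation on the eigenbasis, the Cauchy--Schwarz comparison $\langle |T|h_j,h_j\rangle\leq\|Th_j\|$, the Schmidt-decomposition estimate for (ii)$\Rightarrow$(iii), and the polar-decomposition argument with a completed basis for (iii)$\Rightarrow$(ii). Of the two delicate points you flag, the unconditional-implies-absolute step is indeed routine, but the completion step is not resolved by the remark that indices in $\ker(T)$ contribute zero: the honest fix is to thin the countable divergent subfamily $J''$ so that $\cJ\setminus J''$ remains infinite (split $J''$ in two, one half still diverges); then the orthocomplement of $\Span\{Vh_j\mid j\in J''\}$ has Hilbert dimension $\card(\cJ\setminus J'')$ and the re-indexing by $\cJ$ goes through.

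The genuine problem is elsewhere: your proof establishes only the ``for some'' half of (i), and the ``for any'' half, which the statement asserts as an internal equivalence, is simply false. Take $\cH=\ell^2$, let $\phi=c(1,\tfrac12,\tfrac13,\ldots)$ be normalised, and let $T=\phi\otimes\ol{\phi}$ be the rank-one orthogonal projection onto $\FF\phi$. Then $T\in\cB_1(\cH)$ with $\|T\|_1=1$, yet for the standard basis $\sum_n\|Te_n\|=\sum_n|\langle e_n,\phi\rangle|=c\sum_n 1/n=\infty$. So trace class does not imply $\sum_j\|Th_j\|<\infty$ for every orthonormal basis, the clause ``for some, equivalently, for any'' in (i) cannot be proved, and your quiet restriction to exhibiting one good basis (the eigenbasis of $|T|$) is not a stylistic shortcut but a necessary correction of the statement; you should say so explicitly rather than leave the ``for any'' claim unaddressed. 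By contrast, the ``some if and only if any'' claim in (ii) is genuine, and your Tonelli argument is exactly what proves it.
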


The $*$-ideal $\cB_1(\cH)$ is closely related to the concept of the \emph{trace} of an operator.
\begin{theorem}
Let $T\in\cB_1(T)$. Then, for any orthonormal basis $\{h_j\}_{j\in\cJ}$ of $\cH$, the sum
\begin{equation*}
\tr(T):=\sum_{j\in\cJ} \langle Th_j,h_j\rangle_\cH
\end{equation*}
converges and does not depend on the orthonormal basis. In particular, with notation as 
in \eqref{e:tefa}, we have
\begin{equation}
\tr(T)=\sum_{n=1}^\infty s_n(T) \langle \psi_n,\phi_n\rangle_\cH
=\sum_{n=1}^\infty \langle T\phi_n,\phi_n\rangle_\cH,\end{equation}
and, if $T\geq 0$ then
\begin{equation*}
\tr(T)=\sum_{n=1}^\infty s_n(T)=\|T\|_1.
\end{equation*}
\end{theorem}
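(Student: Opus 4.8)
The plan is to reduce the whole statement to a single absolutely convergent double sum built from the singular system $(s_n(T),\phi_n,\psi_n)$ of $T$ recorded in \eqref{e:tefa}, and to read off convergence, independence of the basis, and both explicit formulas from it at once. The trace-class hypothesis enters only through the finiteness $\sum_n s_n(T)=\|T\|_1<\infty$, which is exactly what will tame the interchange of summations.

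First I would fix an arbitrary orthonormal basis $\{h_j\}_{j\in\cJ}$ of $\cH$. Since $(\phi_n)_n$ is an orthonormal basis of $\cH\ominus\ker(T)$ and $T$ kills $\ker(T)$, the relations \eqref{e:tefa} give $Th_j=\sum_{n}s_n(T)\langle h_j,\phi_n\rangle_\cH\,\psi_n$. Pairing with $h_j$ and summing over $j$ produces
\begin{equation*}
\sum_{j\in\cJ}\langle Th_j,h_j\rangle_\cH=\sum_{j\in\cJ}\sum_{n}s_n(T)\,\langle\psi_n,h_j\rangle_\cH\,\langle h_j,\phi_n\rangle_\cH.
\end{equation*}

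The decisive step is to justify that this double sum converges absolutely and may be reordered. Here I would apply the Cauchy--Schwarz inequality in the variable $j$ for each fixed $n$, using Bessel's inequality to get $\bigl(\sum_j|\langle h_j,\phi_n\rangle_\cH|^2\bigr)^{1/2}=\|\phi_n\|_\cH=1$ and likewise $\bigl(\sum_j|\langle\psi_n,h_j\rangle_\cH|^2\bigr)^{1/2}=\|\psi_n\|_\cH=1$, the vector $\psi_n=V\phi_n$ being a unit vector because the polar isometry $V$ is isometric on $\cH\ominus\ker(T)\ni\phi_n$. This bounds the sum of absolute values by $\sum_n s_n(T)=\|T\|_1<\infty$, and that bound is finite precisely because $T\in\cB_1(\cH)$; the elementary Fubini/Tonelli theorem for double series then licenses interchanging the two summations. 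This is the only genuine obstacle, and it is dispatched entirely by the trace-class hypothesis.

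Having interchanged, the inner sum over $j$ collapses by the reconstruction identity $\sum_{j\in\cJ}\langle\psi_n,h_j\rangle_\cH\langle h_j,\phi_n\rangle_\cH=\langle\psi_n,\phi_n\rangle_\cH$, leaving
\begin{equation*}
\sum_{j\in\cJ}\langle Th_j,h_j\rangle_\cH=\sum_{n}s_n(T)\,\langle\psi_n,\phi_n\rangle_\cH.
\end{equation*}
The right-hand side no longer mentions $\{h_j\}_{j\in\cJ}$, so this one identity yields both convergence and independence of the orthonormal basis simultaneously, and since $T\phi_n=s_n(T)\psi_n$ by \eqref{e:tefa} it also equals $\sum_n\langle T\phi_n,\phi_n\rangle_\cH$, giving the two displayed formulas. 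Finally, for $T\geq 0$ one has $|T|=T$, so $V$ acts as the identity on the relevant subspace and $\psi_n=\phi_n$; hence $\langle\psi_n,\phi_n\rangle_\cH=1$ and $\tr(T)=\sum_n s_n(T)=\|T\|_1$.
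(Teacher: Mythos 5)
Your argument is correct and complete. There is, in fact, no in-paper proof to compare it against: this theorem sits in the appendix of background material on trace-class operators, which the paper states without proof, deferring to \cite{BirmanSolomjak} and \cite{Conway}. Your route --- expanding $\langle Th_j,h_j\rangle_\cH$ through the Schmidt decomposition, bounding
\begin{equation*}
\sum_{j\in\cJ}\sum_{n} s_n(T)\,\bigl|\langle\psi_n,h_j\rangle_\cH\bigr|\,\bigl|\langle h_j,\phi_n\rangle_\cH\bigr|\le\sum_{n}s_n(T)=\|T\|_1
\end{equation*}
by Cauchy--Schwarz in $j$, and then interchanging the sums --- is the standard textbook argument, and it does deliver every assertion of the statement at once: absolute summability gives convergence (for a possibly uncountable $\cJ$ this is exactly the right notion, namely unordered summability), the collapsed inner sum $\langle\psi_n,\phi_n\rangle_\cH$ gives basis-independence together with both displayed formulas, and positivity of $T$ forces $\psi_n=\phi_n$, hence $\tr(T)=\sum_n s_n(T)=\|T\|_1$. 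Two cosmetic remarks only: the identity $\sum_{j\in\cJ}|\langle h_j,\phi_n\rangle_\cH|^2=1$ is Parseval rather than Bessel (though the inequality alone already suffices for your estimate); and your reading of \eqref{e:tefa} as $Th_j=\sum_n s_n(T)\langle h_j,\phi_n\rangle_\cH\,\psi_n$ is the correct one, even though the appendix's stated convention for $x\otimes\ol{y}$ is internally inconsistent with its own formula \eqref{e:schmidto} --- a slip in the paper, not in your proof.
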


\subsection{Hilbert-Schmidt Operators} \label{ss:hso}
An operator $T\in\cB_0(\cH)$ is \emph{Hilbert-Schmidt} if
\begin{equation*}
\|T\|_2^2:=\sum_{n=1}^\infty s_n^2(T)<\infty.
\end{equation*}
We denote by $\cB_2(\cH)$ the collection of all Hilbert-Schmidt operators on the Hilbert space $\cH$.

\begin{theorem} Let $T\in\cB(\cH)$ for some Hilbert space $\cH$. Then $T$ is a Hilbert-Schmidt operator 
if and only if for some, equivalently, for any orthonormal basis $\{h_j\}_{j\in\cJ}$ of $\cH$, we have
\begin{equation*}
\sum_{j\in\cJ} \|Th_j\|_\cH^2<\infty.
\end{equation*}
In this case,
\begin{equation*}
\|T\|_2^2=\sum_{j\in\cJ} \|Th_j\|_\cH^2,
\end{equation*}
in particular, the sum does not depend on the orthonormal basis.
\end{theorem}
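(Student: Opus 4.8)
The plan is to build everything on a single basis-independence identity and then feed the result into the Schmidt decomposition already recorded for compact operators.

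First I would prove that, for any two orthonormal bases $\{h_j\}_{j\in\cJ}$ and $\{g_i\}_{i\in\cI}$ of $\cH$, one has $\sum_{j\in\cJ}\|Th_j\|_\cH^2=\sum_{i\in\cI}\|T^*g_i\|_\cH^2$. This follows by expanding each $\|Th_j\|_\cH^2$ via Parseval's Identity in the basis $\{g_i\}$, writing $\langle Th_j,g_i\rangle_\cH=\langle h_j,T^*g_i\rangle_\cH$, and then interchanging the two summations, which is legitimate because all terms are nonnegative and Tonelli's Theorem applies. Since the left-hand side depends only on $T$ and $\{h_j\}$ while the right-hand side depends only on $T^*$ and $\{g_i\}$, fixing one basis and varying the other shows at once that $\sum_j\|Th_j\|_\cH^2$ is independent of the orthonormal basis, that the analogous quantity for $T^*$ equals it, and hence that the \emph{for some} and \emph{for any} formulations coincide.

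Next I would dispose of the implicit compactness issue: the statement allows an arbitrary $T\in\cB(\cH)$, whereas the singular numbers $s_n(T)$ entering the definition of $\|T\|_2$ are only available once $T$ is known to be compact. Assuming $\sum_{j\in\cJ}\|Th_j\|_\cH^2<\infty$ for one basis, I would approximate $T$ in the operator norm by the finite rank operators $TP_F$, where $P_F$ is the orthogonal projection onto $\Span\{h_j\mid j\in F\}$ and $F\subseteq\cJ$ is finite. A Cauchy--Bunyakovsky--Schwarz estimate gives $\|(T-TP_F)h\|_\cH\le(\sum_{j\notin F}\|Th_j\|_\cH^2)^{1/2}$ for every $h\in\cH$ with $\|h\|_\cH\le 1$, so the tail of the convergent series forces $\|T-TP_F\|\to 0$. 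Since each $TP_F$ is compact and $\cB_0(\cH)$ is closed in the uniform topology, it follows that $T$ is compact.

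Finally, with $T$ compact I would invoke its Schmidt decomposition $T=\sum_n s_n(T)\psi_n\otimes\ol{\phi_n}$, where $\{\phi_n\}$ is orthonormal in $\cH\ominus\ker(T)$ and $\{\psi_n\}$ is orthonormal. Completing $\{\phi_n\}$ to an orthonormal basis of $\cH$ by adjoining an orthonormal basis of $\ker(T)$, and computing $\|T\phi_n\|_\cH^2=s_n^2(T)$ while $Th=0$ for $h\in\ker(T)$, I obtain $\sum_j\|Th_j\|_\cH^2=\sum_n s_n^2(T)=\|T\|_2^2$ for this particular basis; the basis-independence from the first step then propagates the equality to every orthonormal basis. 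The only genuinely delicate point is the compactness step, since without it the symbol $\|T\|_2$ is undefined, and it is precisely the finite rank approximation controlled by the tail of $\sum_j\|Th_j\|_\cH^2$ that repairs this.
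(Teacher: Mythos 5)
Your proof is correct and complete. Note that the paper states this result in its appendix (Subsection~\ref{ss:hso}) as review material, without proof, so there is no argument of the author's to compare against; what you have written is the standard textbook argument, correctly executed. The three ingredients are all in order: the Parseval--Tonelli interchange giving $\sum_{j}\|Th_j\|_\cH^2=\sum_{i}\|T^*g_i\|_\cH^2$ and hence basis independence; the finite-rank approximation $\|T-TP_F\|\leq\bigl(\sum_{j\notin F}\|Th_j\|_\cH^2\bigr)^{1/2}$, which settles the one genuinely delicate point, namely that finiteness of the sum forces compactness before the singular numbers $s_n(T)$ (and hence $\|T\|_2$) are even defined; and the evaluation of the sum in the eigenbasis of $|T|$ completed by a basis of $\ker(T)$, which yields $\sum_n s_n^2(T)$ and, via basis independence, both directions of the equivalence at once. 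You were right to flag the compactness step as the crux: the paper's definition of $\|T\|_2$ presupposes $T\in\cB_0(\cH)$, and a proof that skipped this step would be circular for the ``if'' direction.
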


The following theorem gathers most of the useful properties of the Hilbert-Schmidt operators.

\begin{theorem} \emph{(1)} $\cB_2(\cH)$ is a two-sided $*$-ideal of $\cB(\cH)$, $\|T\|\leq \|T\|_2$ for all 
$T\in\cB_2(\cH)$, and $\|ATB\|_2\leq \|A\| \|T\|_2 \|B\|$ for all $A,B\in \cB(\cH)$ and all $T\in\cB_2(\cH)$.

\nr{2} For any $S,T\in\cB_2(\cH)$ we have $ST\in\cB_1(\cH)$ and $\|ST\|_1\leq \|S\|_2 \|T\|_2$. Moreover,
letting
\begin{equation*}
\langle S,T\rangle_{\mathrm{HS}}:=\tr(T^*S),\quad S,T\in\cB_2(\cH),
\end{equation*}
we get a Hilbert space $(\cB_2(\cH),\langle\cdot,\cdot\rangle_{\mathrm{HS}})$ such that 
$\langle T,T\rangle_{\mathrm{HS}}=\|T\|_2^2$ for all $T\in\cB_2(\cH)$.

\nr{3} Let $\{h_j\}_{j\in\cJ}$ and $\{g_j\}_{j\in\cJ}$ be two orthonormal bases of 
$(\cH,\langle\cdot,\cdot\rangle_\cH)$. Then $\{h_j\otimes\ol{g_k}\}_{j,k\in\cJ}$ is an orthonormal bases of 
$(\cB_2(\cH),\langle\cdot,\cdot\rangle_{\mathrm{HS}})$.
\end{theorem}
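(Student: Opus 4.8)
The plan is to reduce everything to the orthonormal-basis characterisation of $\cB_2(\cH)$ established just above, namely that $T\in\cB_2(\cH)$ exactly when $\sum_{j\in\cJ}\|Th_j\|_\cH^2<\infty$ for some (equivalently every) orthonormal basis $\{h_j\}_{j\in\cJ}$, with $\|T\|_2^2=\sum_{j\in\cJ}\|Th_j\|_\cH^2$, together with the relation $\|T^*\|_2=\|T\|_2$ coming from the coincidence of the nonzero singular numbers of $T$ and $T^*$ in \eqref{e:tefa}. For~(1), the bound $\|T\|\le\|T\|_2$ holds because $\|T\|=s_1(T)$ and $s_1(T)^2\le\sum_n s_n(T)^2=\|T\|_2^2$. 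For $A\in\cB(\cH)$ I would write $\sum_{j}\|ATh_j\|_\cH^2\le\|A\|^2\sum_{j}\|Th_j\|_\cH^2$, so that $AT\in\cB_2(\cH)$ and $\|AT\|_2\le\|A\|\,\|T\|_2$; the other factor follows by taking adjoints, $\|TB\|_2=\|B^*T^*\|_2\le\|B^*\|\,\|T^*\|_2=\|B\|\,\|T\|_2$, and composing gives $\|ATB\|_2\le\|A\|\,\|T\|_2\,\|B\|$. That $\cB_2(\cH)$ is a linear subspace is the Minkowski inequality in $\ell^2(\cJ)$ applied to $\|(S+T)h_j\|_\cH\le\|Sh_j\|_\cH+\|Th_j\|_\cH$, and stability under $*$ is precisely $\|T^*\|_2=\|T\|_2$; hence $\cB_2(\cH)$ is a two-sided $*$-ideal.

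For~(2), to prove $ST\in\cB_1(\cH)$ with $\|ST\|_1\le\|S\|_2\,\|T\|_2$ I would take the polar decomposition $ST=V|ST|$ and an orthonormal basis $\{h_j\}$ diagonalising $|ST|$, so that $\|ST\|_1=\sum_j\langle|ST|h_j,h_j\rangle_\cH=\sum_j\langle Th_j,S^*Vh_j\rangle_\cH$; Cauchy--Schwarz over the index $j$ then bounds this by $\|T\|_2\,\|S^*V\|_2\le\|T\|_2\,\|S\|_2$, using $\|V\|\le1$ and~(1). Consequently $T^*S\in\cB_1(\cH)$ and $\tr(T^*S)$ is well defined; sesquilinearity is clear, the Hermitian property is $\ol{\tr(T^*S)}=\tr(S^*T)$ via $\ol{\tr(R)}=\tr(R^*)$, and positivity is the identity $\langle T,T\rangle_{\mathrm{HS}}=\tr(T^*T)=\sum_j\|Th_j\|_\cH^2=\|T\|_2^2$. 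The substantial point is completeness: given a $\|\cdot\|_2$-Cauchy sequence $(T_n)_n$, the inequality $\|\cdot\|\le\|\cdot\|_2$ makes it Cauchy in operator norm, so $T_n\to T$ uniformly with $T\in\cB_0(\cH)$; then for each finite $F\subseteq\cJ$ one has $\sum_{j\in F}\|(T_n-T_m)h_j\|_\cH^2\le\|T_n-T_m\|_2^2$, and letting $m\to\infty$ (so $T_mh_j\to Th_j$) and taking the supremum over finite $F$ yields $\|T_n-T\|_2\to0$ and $T\in\cB_2(\cH)$.

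For~(3), orthonormality is a direct rank-one computation. Using $(a\otimes\ol b)^*=b\otimes\ol a$ and the multiplication rule $(g_{k'}\otimes\ol{h_{j'}})(h_j\otimes\ol{g_k})=\langle g_k,g_{k'}\rangle_\cH\,(h_j\otimes\ol{h_{j'}})$, together with $\tr(x\otimes\ol y)=\langle y,x\rangle_\cH$, I get $\langle h_j\otimes\ol{g_k},h_{j'}\otimes\ol{g_{k'}}\rangle_{\mathrm{HS}}=\langle g_k,g_{k'}\rangle_\cH\,\langle h_{j'},h_j\rangle_\cH=\delta_{jj'}\delta_{kk'}$. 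For totality I would compute, for $T\in\cB_2(\cH)$, that $\langle T,h_j\otimes\ol{g_k}\rangle_{\mathrm{HS}}=\tr\bigl((g_k\otimes\ol{h_j})T\bigr)=\langle Th_j,g_k\rangle_\cH$; if this vanishes for all $j,k$ then $Th_j=0$ for every $j$ because $\{g_k\}$ is a basis, whence $T=0$. Thus the orthonormal family has trivial orthogonal complement and is a basis. I expect the completeness argument in~(2) to be the only real obstacle, since it is the one step requiring a careful interchange of limits rather than a formal manipulation of bases and traces.
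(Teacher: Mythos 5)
This theorem sits in the survey appendix (Section~\ref{s:app}), where the paper states it without proof, referring the reader to \cite{BirmanSolomjak} and \cite{Conway}; there is therefore no proof in the paper to compare against. Your argument is correct and is the standard one: part (1) via the orthonormal-basis characterisation of $\cB_2(\cH)$ together with $\|T^*\|_2=\|T\|_2$; part (2) via the polar decomposition $ST=V|ST|$, an eigenbasis of $|ST|$, and Cauchy--Schwarz in $\ell^2(\cJ)$, with the completeness argument (Cauchy in $\|\cdot\|_2$ implies Cauchy in operator norm, then pass to the limit over finite subsets of $\cJ$) carried out correctly; part (3) via the rank-one identities $(x\otimes\ol y)^*=y\otimes\ol x$, $\tr(x\otimes\ol y)=\langle y,x\rangle_\cH$, and the observation that $\langle T,h_j\otimes\ol{g_k}\rangle_{\mathrm{HS}}=\langle Th_j,g_k\rangle_\cH$, all of which are consistent with the paper's convention $(x\otimes\ol y)z=\langle z,x\rangle y$. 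I see no gaps.
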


\end{document}